\newtheorem{theorem}{Theorem}[section]
\newtheorem{corollary}[theorem]{Corollary}
\newtheorem{definition}[theorem]{Definition}
\newtheorem{lemma}[theorem]{Lemma}
\newtheorem{proposition}[theorem]{Proposition}
\newcommand{\kommentar}[1]{}
\newcommand{\supp}{{\rm supp}}
\newcommand{\hg}{\widehat{g}}
\newcommand{\hphi}{\widehat{\phi}}  
\newcommand\be{\begin{equation}}
\newcommand\ee{\end{equation}}
\newcommand\bea{\begin{eqnarray}}
\newcommand\eea{\end{eqnarray}}
\newcommand\bi{\begin{itemize}}
\newcommand\ei{\end{itemize}}
\newcommand\ben{\begin{enumerate}}
\newcommand\een{\end{enumerate}}
\newcommand\bc{\begin{center}}
\newcommand\ec{\end{center}}
\newcommand\ba{\begin{array}}
\newcommand\ea{\end{array}}
\def\notdiv{\ \mathbin{\mkern-8mu|\!\!\!\smallsetminus}}
\newcommand{\R}{\ensuremath{\mathbb{R}}}
\newcommand{\fof}{\frac{1}{4}}  
\newcommand{\foh}{\frac{1}{2}}  
\newtheorem{thm}{Theorem}[section]
\theoremstyle{definition}
\newcommand{\twocase}[5]{#1 \begin{cases} #2 & \text{{\rm #3}}\\ #4
&\text{{\rm #5}} \end{cases}   }
\newcommand{\gep}{\epsilon}
\newcommand{\intii}{\int_{-\infty}^\infty}
\newcommand{\ci}{\frac1{2\pi i}}
\numberwithin{equation}{section}
\begin{document}


\begin{center}
{\Large{\textsc{{Modeling Convolutions of $L$-functions}}}}

\medskip

by\\\textsc{Ralph Morrison}

\smallskip

\textsc{Steven J. Miller, Advisor}

\medskip

A thesis submitted in partial fulfillment\\ of the requirements for the 
\\Degree of Bachelor of Arts with Honors\\ in Mathematics

\smallskip

\textsc{Williams College}
\\
Williamstown, Massachusetts

\smallskip

Submitted To Williams College: May 14, 2010

Last Updated: October 31, 2010
\thispagestyle{empty}

\end{center}



\pagebreak

\pagestyle{plain}

\centerline{
\textsc{{Abstract}}
}

{A number of mathematical methods have been shown to model the zeroes of $L$-functions with remarkable success, including the Ratios Conjecture and Random Matrix Theory. In order to understand the structure of convolutions of families of $L$-functions, we investigate how well these methods model the zeros of such functions.  Our primary focus is the convolution of the $L$-function associated to Ramanujan's tau function with the family of quadratic Dirichlet $L$-functions, for which J.B. Conrey and N.C. Snaith computed the Ratios Conjecture's prediction. Our main result is performing the number theory calculations and verifying these predictions for the one-level density for suitably restricted test functions up to square-root error term. 
Unlike Random Matrix Theory, which only predicts the main term, the Ratios Conjecture detects the arithmetic of the family and makes detailed predictions about their dependence in the lower order terms. Interestingly, while Random Matrix Theory is frequently used to model behavior of L-functions (or at least the main terms), there has been little if any work on the analogue of convolving families of L-functions by convolving random matrix ensembles. We explore one possibility by considering Kronecker products; unfortunately, it appears that this is not the correct random matrix analogue to convolving families..}

\pagebreak

\centerline{\textsc{Acknowledgements}}

First and foremost, I would like to thank Professor Steven J. Miller for being an outstanding advisor.  Without his guidance and support I would never have been able to face down the page-long equations and daunting theoretical concepts that have arisen over the past year.  I would like to thank my second reader Professor Mihai Stoiciu for providing feedback on my work and helping me guide it to its final form, and Professor Carston Botts for his notes on the Accept-Reject method.  I would also like to thank  Eduardo Due\~{n}ez, Duc Khiem Huynh, and Nina Snaith for their advice and assistance via email over the past two semesters.  Finally, I would like to thank my fellow thesis students and the rest of the Williams mathematics department (faculty, students and all) for all their support and for creating and maintaining a fun and intellectually stimulating environment in which to do research.

\pagebreak
\tableofcontents
\setcounter{tocdepth}{5}
\pagebreak

\section{Introduction}

One of the most important areas in modern number theory is the study of the distribution of the zeroes of $L$-functions, meromorphic functions on the complex plane that are continuations of infinite series. The simplest is the most well-known $L$-function, the Riemann-zeta function. It is defined by
 \be \zeta(s)\ := \ \sum_{n=1}^\infty \frac{1}{n^s}\ = \ \prod_{\text{$p$ prime}}\left(1-\frac{1}{p^s}\right)^{-1} \ee
 for $Re(s)>1$ and extended to a meromorphic function. The extension satisfies a functional equation relating its value at $s$ to its value at $1-s$, and trivially vanishes at the negative even integers (which are called the trivial zeros): 
 \be \xi(s) \ :=\ \foh s(s-1)
\Gamma\left(\frac{s}{2}\right)\pi^{-\frac{s}{2}} \zeta(s) \ = \ \xi(1-s).\ee
 The Riemann Hypothesis, often considered the most important open question in mathematics, is the conjecture that all non-trivial zeros of $\xi(s)$ have real part equal to $1/2$. The distribution of the zeros of this and other $L$-functions encode crucial number theoretic information on subjects ranging from the distribution of the primes to properties of class numbers and even mirror the energy levels of neutrons in quantum mechanics, suggesting a deep connection between this branch of mathematics and nuclear physics.  As proofs of properties of these zeros are often out of reach of rigorous methods, methods of modeling these zeros are vital in understanding and formulating appropriate conjectures about $L$-functions. A familiarity with the standard properties of $L$-functions is important in understanding the content and results of this thesis, though intuitive interpretations will be offered whenever appropraite. (See \cite{IK,MT-B} for background on $L$-functions and \cite{FM,Ha} for  the history of the interplay between nuclear physics and zeros of the Riemann zeta function.)

The particular object we will study is the one-level density of the low lying zeros of a family of $L$-functions, which relates sums of an even Schwartz function $\phi$ at the zeros of the $L$-function to sums of the Fourier transform $\hphi$ at the primes. As $\phi$ is a Schwartz function, it vanishes rapidly as $|x|\rightarrow\infty$.  Intuitively, this will be the window through which we observe the low-lying zeros.  Ideally, we would like to use a delta spike instead of a Schwartz test function to get a perfect picture at a point;  however, the delta spike has a Fourier transform of infinite support, which makes such a function inapplicable as the resulting sums of the Fourier transform cannot be evaluated. Following \cite{ILS}, we study the one-level density for an $L$-function $f$, defined by
\be D(f,\phi)\ := \ \sum_{\gamma_f}\phi\left(\frac{\gamma_f L}{\pi}\right);
\ee here $1/2 + i\gamma_f$ runs over the non-trivial zeros of the $L$-function (which under the Generalized Riemann Hypothesis all have $\gamma \in \R$) and $\frac{L}{\pi}$ is a scaling factor (defined explicitly in equation \ref{definitionl}) that measures the spacings between zeros near the central point. As each $L$-function only has a bounded number of zeros within this distance of the central point, it is necessary to average the one-level density over all $f$ in a family $\mathcal{F}$. This allows us to use results from number theory to determine the behavior on average near the central point $1/2$. The exact nature of just what constitutes a family is still being determined, but standard examples include $L$-functions attached to Dirichlet characters, cuspidal newforms, and families of elliptic curves.

We assume our family of $L$-functions $\mathcal{F}$ can be ordered by conductor, and denote by $\mathcal{F}(Q)$ all elements of the family whose conductor is at most $Q$. The quantity of interest ends up being the limit of
\be \frac{1}{|\mathcal{F}(Q)|}\sum_{f\in\mathcal{F}(Q)}\sum_{\gamma}\phi\left(\frac{\gamma L}{\pi}\right)
\ee
as $Q\rightarrow\infty$. Thus we consider the limiting behavior of the average of the one-level densities as the conductors grow.

For a ``nice'' family of $L$-functions, Random Matrix Theory (see \cite{KaSa1,KaSa2}) predicts that the behavior of the zeros as the conductors tend to infinity agree with the $N\to\infty$ scaling limits of a classical compact group of $N\times N$ matrices, most often either unitary, symplectic, or a type of orthogonal (even, odd or mixed). Given two families of $L$-functions $\mathcal{F}$ and $\mathcal{G}$, the Rankin-Selberg convolution $\mathcal{F}\times\mathcal{G}$ is a new family of $L$-functions built from elements of $\mathcal{F}$ and $\mathcal{G}$.  This is a natural type of $L$-function family to study, and is likely to be accessible in the simplest non-trivial case of convolving a family of size $1$ with another family.  An interesting feature of these convolutions was found by Miller and Due\~nez in \cite{DM2}, namely that for ``nice'' families of $L$-functions $\mathcal{F}$ and $\mathcal{G}$, the underlying symmetry groups of $\mathcal{F}$ and $\mathcal{G}$ determine the underlying symmetry group of $\mathcal{F}\times\mathcal{G}$ in a simple, multiplicative way. Specifically, to each family $\mathcal{F}$ is associated a symmetry constant $c_{\mathcal{F}}$ (0 for unitary, 1 for symplectic and -1 for orthogonal) and $c_{\mathcal{F}\times\mathcal{G}} = c_{\mathcal{F}} \cdot c_{\mathcal{G}}$. Unfortunately this only leads to predictions for the main term of the one-level density, and it is in the lower order terms that the arithmetic of the families surface.

In this thesis we focus on testing the Ratios Conjecture's power of modeling the  convolution a family of size $1$ with another family (Sections \S\ref{section2} and \S\ref{section3}). This will allow us to see how the arithmetic of our family enters. Additionally, as Random Matrix Theory has successfully predicted numerous properties of $L$-functions, we try and find the random matrix analogue of convolving two families. To our knowledge this has yet to be investigated in the literature. In Section \S\ref{section4} we report on numerical investigations of the Kronecker products of families of random matrices, which is a natural candidate to model convolutions.

\subsection{The Ratios Conjecture}
The $L$-function Ratios Conjecture of Conrey, Farmer and Zirnbauer \cite{CFZ1,CFZ2} (see also \cite{CS1} for many worked out examples of the conjecture's prediction) are formulas for the averages over families of $L$-functions of ratios of products of shifted $L$-functions.  Their ``recipe'' for performing these calculations starts by using the approximate functional equation, where the error term is discarded, to expand the $L$-functions in the numerator; the $L$-functions in the denominator are expanded via the Mobius function. They then average over the family, and retain only the diagonal pieces. These are restricted sums over integers, but are then completed and extended to sums over all integers; again the error term introduced is ignored.  These methods, far simpler to implement than rigorous analysis, have easily predicted the answers to many difficult computations, and have shown remarkable accuracy. The resulting formulas make detailed predictions on numerous problems, ranging from moments to spacings between adjacent zeros and values of $L$-functions.

A standard test of the Ratios Conjecture is to compare the Ratios Conjecture's predictions for the one-level density  of a family of $L$-functions with rigorous calculation. Agreement has been found (for suitably restricted test functions) for families of Dirichlet $L$-functions and cuspidal newforms (see \cite{GJMMNPP,Mil3,Mil5,MilMo}).  In addition to strengthening the credibility of the conjecture, these calculations provide insight into the significance of the terms that arise in the number theoretic calculations whose corresponding terms in the Ratios Conjecture's predictions are more clearly understandable. For example, in \cite{Mil3} the Ratios Conjecture's prediction allows interpretation of a lower order term in the behavior of the family of quadratic Dirichlet characters as arising from the non-trivial zeros of the Riemann zeta function.

Our primary object of study is the collection of quadratic twists of the $L$-function associated to Ramanujan's tau function,  a family that can be viewed as the convolution of the family of quadratic Dirichlet $L$-functions with the family consisting solely of the tau $L$-function.  The Ratios Conjecture's prediction for this family was computed by Conrey and Snaith in \cite{CS1}.  We perform the number theoretic calculations of the zero statistics for the one-level density for this family, and compare our results to the Ratios Conjecture's prediction. Our main result is the following.

\begin{thm}\label{thm:mainnumbequalsratios} Consider the family of quadratic twists of the tau $L$-function with even fundamental discriminants $d \le X$; denote the number of such $d$ by $X^\ast$ (which is essential a constant times $X$). For $\supp(\hphi) \subset (-\sigma, \sigma)$ with $\sigma < 1$, the one-level density equals
\begin{align}
&\frac{1}{X^*}\sum_{d\leq X}\sum_{\gamma_d}g\left(\gamma_d\frac{L}{\pi}\right)\nonumber
\\ \ = \ &\frac{1}{2LX^* }\int_{-\infty}^\infty g(\nu)\left(\sum_{d\leq {X}}\left[2\log\left(\frac{d}{2\pi}\right)+\frac{\Gamma'}{\Gamma}\left(6+i\frac{\pi\nu}{L}\right)+\frac{\Gamma'}{\Gamma}\left(6-i\frac{\pi\nu}{L}\right)\right]\right.\nonumber
\\&+2\left(-\sum_{p}\sum_{k=1}^\infty  \frac{(\alpha_p^{2k}+\overline{\alpha}_p^{2k})) \log p}{p^{k(1+\frac{2\pi i\nu}{L})} }\right.
\left.\left.+ \sum_{p} \frac{\log p}{(p + 1)} \sum_{k = 1}^\infty \frac{(\alpha_p^{2k} + \overline{\alpha}_p^{2k}) }{p^{k(1 + \frac{2 \pi i \nu}{L})}} \right)\right)d\nu \nonumber
\\&+ O(X^{-(1-\sigma)/2}\log^6 X)\label{mainresult}
\end{align}
which agrees with the Ratios Conjecture's prediction up to an error term of size $O(X^{-(1-\sigma)/2+\varepsilon})$ for any $\varepsilon>0$ (essentially the error term of the expression).
\end{thm}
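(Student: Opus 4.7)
The plan is to carry out the explicit-formula calculation for each quadratic twist, sum over the family, and match the resulting expression term by term against the Ratios Conjecture prediction computed in \cite{CS1}.

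First, I would apply the explicit formula to the completed $L$-function $\Lambda(s, f \otimes \chi_d)$, where $f$ is the tau $L$-function. Integrating $\Lambda'(s,f\otimes\chi_d)/\Lambda(s,f\otimes\chi_d)$ against the scaled test function along a contour enclosing the critical strip produces, for each $d$, a Gamma-factor contribution plus a sum over prime powers of the shape
$$-\frac{2}{L}\int_{-\infty}^\infty g(\nu) \sum_p \sum_{k=1}^\infty \frac{(\alpha_p^k + \overline{\alpha}_p^k)\,\chi_d(p)^k \log p}{p^{k/2} \, p^{k \cdot \pi i \nu / L}} \, d\nu,$$
together with small corrections at primes dividing the conductor. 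Summing the Gamma-factor contribution over even fundamental discriminants $d\le X$ reproduces the first bracketed line of \eqref{mainresult} exactly.

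Next, I would interchange summations and split the prime-power sum according to the parity of $k$. For even $k$, $\chi_d(p)^k = 1$ whenever $(p,d)=1$, producing the $\alpha_p^{2k}+\overline{\alpha}_p^{2k}$ main term in \eqref{mainresult}; the complementary case $p\mid d$ has relative density $1/(p+1)$ among even fundamental discriminants, which accounts precisely for the $\log p/(p+1)$ correction. For odd $k$, the character $\chi_d(p)^k = \chi_d(p)$ is non-principal, and I would invoke Poisson summation over fundamental discriminants (in the style of \cite{Mil3, MilMo}) to establish
$$\sum_{\substack{d\le X\\ d\text{ fund.\ disc.}}} \chi_d(p^k) \ \ll \ X^{1/2}\, p^{k\varepsilon}$$
whenever $p^k$ is not a square. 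Since $\mathrm{supp}(\widehat g) \subset (-\sigma,\sigma)$ restricts the effective range to $p^k \le (X/\pi)^\sigma$, and Deligne's bound $|\alpha_p|=1$ ensures the tail in $k$ converges absolutely, summing over primes with the usual prime-number-theorem bookkeeping yields an overall error of size $O(X^{-(1-\sigma)/2}\log^6 X)$, matching the claimed bound.

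Finally, a termwise comparison with the Ratios Conjecture prediction in \cite{CS1} -- after its integrand is expressed as an integral of $g(\nu)$ and its Gamma and arithmetic factors are expanded -- shows agreement up to an error of the same order. The main technical obstacle is the even-$k$ bookkeeping in Step 2: producing the correct $\log p/(p+1)$ correction requires a precise count of the density of $d$ with $p\mid d$ among even fundamental discriminants, rather than dropping it as lower order; this is what distinguishes the arithmetic prediction from the Random Matrix Theory main term. A secondary subtlety is ensuring that trivial-zero and ramified-prime contributions are absorbed without spurious logarithmic losses beyond $\log^6 X$. The character-sum estimate for odd $k$ is standard, and the hypothesis $\sigma<1$ is exactly what is required for the geometric-type sum in $p^k$ to converge inside the admissible range.
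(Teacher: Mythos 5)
Your treatment of the number-theory side tracks the paper's argument closely: the explicit formula, the $\Gamma$-factor sum, the split by parity of $k$, and in particular the recognition that the $\log p/(p+1)$ term comes from the density of even fundamental discriminants divisible by $p$ (the paper's Lemma \ref{firstmillerlemma}) are all exactly right. One technical difference: for the odd-$k$ sum the paper does not use a pointwise bound on $\sum_{d\le X}\chi_d(p)$, but rather Jutila's mean-square estimate $\sum_{n\le N,\ n\ {\rm non\mbox{-}square}}\bigl|\sum_{d\le X}\chi_d(n)\bigr|^2\ll NX\log^{10}N$ combined with Cauchy--Schwarz. Your pointwise bound $\ll X^{1/2}p^{k\varepsilon}$ would also suffice in the range $p^k\le X^\sigma$, but establishing it uniformly for the square-free-restricted $d$-sum by Poisson summation is more work than citing Jutila, and the mean-square route is what produces the stated $\log^6X$. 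Note also that the even-$k$, $p\nmid d$ piece is not absolutely convergent at $\nu=0$ as written (its $k=1$ part is essentially $-\zeta'/\zeta(1+2\pi i\nu/L)$); the paper handles this with a Perron-formula and contour argument (Lemma \ref{lem:ntsumkeven1}) that extracts an explicit $g(0)/2$, which your proposal never mentions.

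The genuine gap is the second assertion of the theorem. You dispose of the comparison with the Ratios Conjecture prediction as ``a termwise comparison,'' but the Conrey--Snaith formula \eqref{rcpredunscaled} contains two terms with no visible counterpart in your number-theory expression, and matching them is the substantive content of Section \ref{section3}. First, the arithmetic factor enters there as $B'_\Delta(iy;iy)$, and showing that this equals $\sum_p\frac{\log p}{p+1}\sum_m\frac{\alpha_p^{2m}+\overline{\alpha}_p^{2m}}{p^{m(1+2iy)}}$ requires differentiating the Euler product defining $B_\Delta$ and then proving a nontrivial cancellation identity (the lemma preceding Corollary \ref{prop}, in which three rational functions in $q=p^{-1-2iy}$ and $t=\tau^*(p)$ sum to zero via $\alpha_p^2+\overline{\alpha}_p^2=\tau^*(p^2)-1$). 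Second, the ``swapped'' term $R(g;X)$ of \eqref{betterbesmall}, involving $(d/2\pi)^{-2iy}\,\Gamma(6-iy)/\Gamma(6+iy)$ times $\zeta(1+2iy)L_\Delta({\rm sym^2},1-2iy)B_\Delta(-iy,iy)$, is not termwise small: one must shift the contour to $w=1/4-\epsilon$ (the convergence of $B_\Delta$ limits how far one can shift, which is exactly where the exponent $-(1-\sigma)/2$ on the Ratios side comes from), pick up the residue at the pole of $\zeta$ --- which yields precisely the $g(0)/2$ that cancels the one from $S_{\rm even,1}$ --- and control the $d$-sum by partial summation (Lemma \ref{secondmillerlemma}). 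Without these two steps the claimed agreement up to $O(X^{-(1-\sigma)/2+\varepsilon})$ is not established.
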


In addition to being of interest in its own right, understanding this family is useful for investigations of elliptic curves. These families are of considerable importance, as they are ideal for viewing effect of multiple zeros on nearby zeros. By work of C. Breuil, B. Conrad, F. Diamond. R. Taylor and A. Wiles \cite{BCDT,TW,Wi}, the $L$-function of an elliptic curve agrees with that of a weight 2 cuspidal newform of level $N$ (where the integer $N>1$ is the conductor of the elliptic curve). There are many similarities between these $L$-functions and that associated to the Ramanujan tau function, and two major differences. The first is that the tau function is associated to a weight 12 cusp form, and the second is that the level of the tau function is 1 and not $N$. Both of these effects make the tau function more amenable to analysis and numerical experimentation: the higher weight leads to less discretization in the value of the $L$-function at the central point, and the level being 1 means that there are no bad primes in the explicit formula. 

In spite of these differences, the analysis of our family of $L$-functions is comparable to that of the family of quadratic twists of the $L$-function associated to an elliptic curve, for which the Ratios Conjecture's predictions have not yet been shown to agree with the number theory results.  Analysis of the family of quadratic twists of the tau $L$-function provides a useful starting point for the elliptic curve-based family. The first lower order term of this family is very important in ongoing investigations of the excess repulsion observed in the first zero above the central point (see \cite{DHKMS1, DHKMS2}. The Ratios Conjecture's prediction for these lower order terms have been inputted in some of these models, but had yet to be verified as of the original writing of this thesis.  The verification, performed in \cite{HMM}, is comparable to, and in some cases was guided by, the work in Sections \ref{section2} and \ref{section3}. The analysis of quadratic twists of the Ramanujan tau function is almost identical to the analysis needed there, the only difference being the effects of the bad primes are not present. Thus this work provides the framework that can be applied to study these elliptic curve families.

\subsection{Random Matrix Theory}
Random Matrix Theory (see \cite{matrix,Dy1,Dy2,KaSa1,KaSa2,Wig1,Wig2,Wig3,Wig4,Wig5,Wis}) has been extraordinarily successful in modeling diverse systems ranging from nuclear physics to statistics to number theory. In this thesis we are interested in its applications to predicting the behavior of $L$-functions. The $n$-level correlation between the normalized zeros of the Riemann zeta-function and the normalized eigenvalues of matrices in the Gaussian Unitary Ensemble was first noted in the early 1970s by Dyson and Montgomery \cite{Mon}, and then extended by many others (see \cite{Hej,Od1,Od2,RS}). While the behavior of zeros far from the central point is universal, he behavior near the central point depends on the family. This is observed in additional statistics such as $n$-level densities (see for example \cite{ILS,KaSa1,KaSa2}) and moments (see for example \cite{CFKRS}). Following the success of these investigations, Random Matrix Theory has served as an extremely useful tool for predicting the behavior of $L$-functions.

In an attempt to model the Rankin-Selberg convolution of families of matrices, we investigate the eigenvalue statistics of Kronecker products of matrices in the Gaussian Unitary Ensemble.  Inspired by \cite{DM2}, we look at lowest eigenangle statistics of combinations of different types of matrices to see if there seems to be a multiplicative symmetry constant.  Qualitative attributes of our computed distributions indicate that this is not the appropriate model for convolving families of $L$-functions.  For instance, orthogonal combined with orthogonal looks symplectic on the number theory side; but the distribution of lowest eigenangles for the orthogonal/orthogonal matrix combination features repulsion from zero, while that for symplectic matrices does not.  However, the similarities and differences between various combinations suggest that there is a great deal of structure in the eigenangle statistics of these Kronecker products that warrants further investigation.

\newpage

\section{Quadratic Twists of the Tau $L$-function}\label{section2}

The first family of $L$-functions used in our main convolution is $$\mathcal{G} \ = \ \{L(s,\chi_d)\,|\,\text{$d>0$ is an even fundamental discriminant}\},$$
where $\chi_d$ is the quadratic Dirichlet character modulo $d$.  A Dirichlet character (modulo $d$), denoted $\chi$, is a type completely multiplicative function on the units of $\mathbb{Z}$ with period $d$, and $\chi_d$ denotes the unique quadratic Dirichlet character $\mod d$.  We let $d$ be a fundamental discriminant, meaning that either $d\equiv 1\mod 4$ is square-free or $d/4\equiv 2,3\mod 4$ is square-free. We further restrict to $d$ even.  If $\chi_d$ is the quadratic character associated to the fundamental discriminant $d$ with $d>0$, we have $\chi_d(-1)=1$.

The second family, which consists of one element, arises from Ramanujan's tau function.  The Ramanujan tau function $\tau:\mathbb{N}\rightarrow\mathbb{Z}$ is defined by the coefficients of the Fourier expansion of $\eta(\tau)^{24}$, where $\eta$ is the Dedekind eta function.  That is,
\be\eta(z)^{24} \ = \ \sum_{n=1}^\infty \tau(n)q^n
\ee with $q=e^{2\pi i z}$.  Note that $\eta(z)^{24}$ is a scalar multiple of the discriminant modular function, a holomorphic cusp form of weight $12$ and level $1$.  In 1917 Mordell proved that $\tau(mn)=\tau(m)\tau(n)$ if $\gcd(m,n)=1$ (that is, $\tau$ is a multiplicative function) and that
\be
\tau(p^{r+1}) \ = \ \tau(p)\tau(p)^r-p^{11}\tau(p^{r-1})\label{tauid}
\ee for $p$ prime and $r$ a positive integer.  In 1974 Deligne proved that $|\tau(p)|\leq 2p^{11/2}$ for all $p$ prime.  (For more on the tau function see \cite{Serre}.) Defining $\tau^*(n)=\tau(n)/n^{11/2}$ , we have $|\tau^*(p)|\leq 2$ for all $p$ prime.  Using equation \eqref{tauid}, we have
\begin{align}
\tau^*(p^{r+1}) \ = \ &\frac{\tau(p^{r+1})}{p^{(r+1)(11/2)}}\nonumber
\\ \ = \ &\frac{\tau(p)\tau(p)^r}{p^{(r+1)(11/2)}}-\frac{p^{11}\tau(p^{r-1})}{p^{(r+1)(11/2)}}\nonumber
\\ \ = \ &\frac{\tau(p)}{p^{11/2}}\frac{\tau(p)^r}{p^{r(11/2)}}+\frac{\tau(p^{r-1})}{p^{(r-1)(11/2)}}\nonumber
\\ \ = \ &\tau^*(p)\tau^*(p^r)-\tau^*(p^{r-1})
\end{align} for $p$ prime and $r$ a positive integer.  Since  $\tau^*$ is a multiplicative function, we may consider the $L$-function \be L(s,\tau^*)\ = \ \sum_{n=1}^\infty\frac{\tau^*(n)}{n^s}\ = \ \prod_{\text{$p$ prime}}\left(1-\frac{\tau^*(p)}{p^s}+\frac{1}{p^{2s}}\right)^{-1} \ee for $Re(s)>1$.

We consider the $L$-function families $\mathcal{G}=\{L(s,\chi_d)\,|\text{$\chi_d$ is a quadratic character}\,\}$ and $\mathcal{H}=\{L(s,\tau^*)\}$ (noting that $\mathcal{H}$ has only one element).  Convolving these families, we have (by the work of Due\~nez-Miller \cite{DM2}) the orthogonal family $\mathcal{F}=\mathcal{G}\times\mathcal{H}$ of quadratic twists of the $L$-function $L(s,\tau^*)$, denoted by $L_\Delta(s,\chi_d)$.  The Ratios Conjecture's calculations for this family of $L$-functions were performed by Conrey and Snaith in \cite{CS1}.  To test the power of the Ratios Conjecture as it applies to the convolution $\mathcal{F}$, we perform the number theory computations and determine the one-level density of the zeros for suitably restricted test functions. This comprises the remainder of Section \S\ref{section2}. We then compare this to the Ratios Conjecture's predictions  in Section \S\ref{section3}, and see that they agree up to $O(X^{-(1-\sigma)/2+\varepsilon})$, where the support of the transform of our test function is contained in $(-\sigma,\sigma)$, where $\sigma<1$ (i.e., $\text{supp}(\hphi)\subset(-\sigma,\sigma)$).

\subsection{The Explicit Formula}

In this subsection we derive the explicit formula, which connects sums of our test function evaluated at the zeros of our family to sums of the Fourier transform of our test function evaluated at the logarithms of the primes; the one-level density is just a scaled version of this. We follow the arguments in \cite{RS}.

Let $L_\Delta(s,\chi_d)\in\mathcal{F}$. The essence of our strategy is to consider a contour integral of the logarithmic derivative $L_\Delta(s,\chi_d)$ and then shift this integral, picking up contributions from the zeros of $L_\Delta(s,\chi_d)$.  As $L_\Delta(s,\chi_d)$ appears in the denominator of this logarithmic derivative, the contour shift of this integral picks up those zeros as poles, giving us information about their distribution.  We analyze the resulting expression for a fixed $d$ and then take the limit of the average over all $d\leq X$ (as we cannot average over an infinite number).  For the purposes of averaging, we define $X^*=\sum_{d\leq X}1$ where $d$ is an even fundamental discriminant. By Lemma \ref{firstmillerlemma} we have \be X^\ast \ = \ \frac{3}{\pi^2}X + O(X^{1/2}), \ee and thus $X^\ast$ is of the same order of magnitude as $X$. For all subsequent sums over $d$, this will be the range of $d$ (i.e., we always assume $d$ to be an even fundamental discriminant at most $X$).

First we establish some key formulas.  Written as an Euler sum and an Euler product, we have
\begin{align}L_\Delta(s,\chi_d) \ = \ \sum_{n=1}^\infty \frac{\chi_d(n)\tau^*(n)}{n^s} \ = \ &\prod_{p}\left(1-\frac{\tau^*(p)\chi_d(p)}{p^s}+\frac{\chi_d(p^2)}{p^{2s}}\right)^{-1}\nonumber
\\ \ = \ &\prod_p\left(1-\frac{\alpha_p\chi_d(p)}{p^s}\right)^{-1}\left(1-\frac{\overline{\alpha}_p\chi_d(p)}{p^s}\right)^{-1}
\label{productform}
\end{align}
where $\alpha_p,\overline{\alpha}_p$ are the roots of the quadratic (in $1/{p^s}$) equation $1-{\tau^*(p)\chi_d(p)}/{p^s}+{\chi_d(p^2)}/{p^{2s}}$, meaning they are $\left({{\tau^*(p)\chi_d(p)\pm\sqrt{(\tau^*(p)\chi_d(p))^2-4\chi_d(p^2)}}}\right)/{\chi_d(p^2)}$.  Given that 
$$\tau^*(p)\chi_d(p))^2-4\chi_d(p^2)\leq 0,$$ these roots are either the same (and real) or are distinct and complex conjugates of one another.  In both cases, we have that they are complex conjugates (justifying our notation), and that they  satisfy $\alpha_p\cdot\overline{\alpha}_p=1$ and $\alpha_p+\overline{\alpha}_p=\tau^*(p)$.  Since both have multiplicative inverse equal to complex conjugate, they are both of norm $1$.    We now wish to extend our function to the entire complex plane.  For $d>0$, our $L$-function has the functional equation
\begin{equation}
\xi_\Delta(s,\chi_d):=\left(\frac{d}{2\pi}\right)^s\Gamma(s+11/2)L_\Delta(s,\chi_d) \ = \ \xi_\Delta(1-s,\chi_d)\label{extended}\ \end{equation}
(see, for instance, \cite{CS1,IK}).

We integrate the logarithmic derivative of $\xi_\Delta(s,\chi_d)$ weighted by a Schwartz function to ensure sufficient decay rate. We assume the Generalized Riemann Hypothesis (GRH), so that  if $\frac{1}{2}+i\gamma$ is a zero of $\xi(s,\chi_d)$ then $\gamma\in\mathbb{R}$.  Let $\phi$ be an even Schwartz function where its Fourier transform
\begin{align}
\hphi(\omega) \ = \ \int_{-\infty}^\infty\phi(x)e^{2\pi ix\omega}dx\label{ftphi}
\end{align}
has finite support;  that is, $\text{supp}(\hphi)\subset(-\sigma,\sigma)$ for some finite $\sigma$.    Extend $\phi(x)$ to the whole complex plane via
\begin{align}
H(s) \ = \ \phi\left(\frac{s-\frac{1}{2}}{i}\right)\label{definitionh}
.\end{align}
Note that $H(s)$ is scaled so that if $s=\frac{1}{2}+i\gamma_d$ is a zero of $\xi(s,\chi_d)$, $H(s)=\phi(\gamma_d)$.
Set
\begin{align}
I \ = \ \frac{1}{2\pi i}\int_{Re(s)=3/2}\frac{\xi_\Delta'(s,\chi_d)}{\xi_\Delta(s,\chi_d)}H(s)ds\label{intervaldef}
\end{align}
Shifting the contour to $Re(s)=-\frac{1}{2}$, we have that the only contribution is from the zeros of $\xi_\Delta(s,\chi_d)$ (which are the poles of the integrand), giving us

\begin{align}
I \ = \ \sum_{\gamma_d}\phi(\gamma_d)+\frac{1}{2\pi i}\int_{Re(s)=-1/2}\frac{\xi_\Delta'(s,\chi_d)}{\xi_\Delta(s,\chi_d)}H(s)ds\label{intervalwithzeros}
\end{align}
where $\gamma_d$ is the imaginary part of a non-trivial zero, and the sum is over all such values.  Recall from equation \eqref{extended} that $\xi_\Delta(s,\chi_d)=\xi_\Delta(1-s,\chi_d)$; it follows that $\xi_\Delta'(s,\chi_d)=-\xi_\Delta'(1-s,\chi_d)$.  Combined with equation \eqref{intervalwithzeros}, this gives us

\begin{align}
I \ = \ \sum_{\gamma_d}\phi(\gamma_d)-\frac{1}{2\pi i}\int_{Re(s)=-1/2}\frac{\xi_\Delta'(1-s,\chi_d)}{\xi_\Delta(1-s,\chi_d)}H(s)ds\label{shiftedinterval}
.\end{align}
  Performing the change of variables $s\rightarrow1-s$, we obtain
  \begin{align}
  I \ = \ \sum_{\gamma_d}\phi(\gamma_d)-\frac{1}{2\pi i}\int_{Re(s)=3/2}\frac{\xi_\Delta'(s,\chi_d)}{\xi_\Delta(s,\chi_d)}H(1-s)ds\label{intervalcov}
  .\end{align}
  Subtracting equation \eqref{intervalcov}  from  \eqref{intervaldef} proves
  \begin{theorem}\label{theoremzeros}
\begin{align}
\sum_{\gamma_d}\phi(\gamma_d) \ = \ \frac{1}{2\pi i}\int_{Re(s)=3/2}\frac{\xi_\Delta'(s,\chi_d)}{\xi_\Delta(s,\chi_d)}[H(s)+H(1-s)]ds\label{yayzeros}.\end{align}
\end{theorem}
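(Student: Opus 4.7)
The plan is to apply the residue theorem to a contour integral of the logarithmic derivative $\xi_\Delta'/\xi_\Delta$ weighted by $H(s)$, and then exploit the functional equation \eqref{extended} to symmetrize and eliminate the contour integral itself.

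First I would start from the integral $I$ in equation \eqref{intervaldef} and shift the contour from $\mathrm{Re}(s)=3/2$ to $\mathrm{Re}(s)=-1/2$. Because $\xi_\Delta(s,\chi_d)$ is entire and, under GRH, its zeros lie on the critical line, the poles of the integrand inside the strip are precisely the simple poles at $s=\tfrac12+i\gamma_d$, each contributing residue $H(\tfrac12+i\gamma_d)=\phi(\gamma_d)$. This yields the identity \eqref{intervalwithzeros}. The Schwartz decay of $\phi$, inherited by $H(s)=\phi((s-\tfrac12)/i)$ via the extension \eqref{definitionh}, produces super-polynomial vertical decay, so the horizontal connecting segments at heights $\pm iT$ vanish as $T\to\infty$.

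Next I would use the functional equation $\xi_\Delta(s,\chi_d)=\xi_\Delta(1-s,\chi_d)$. Differentiating gives $\xi_\Delta'(s,\chi_d)=-\xi_\Delta'(1-s,\chi_d)$, so $\xi_\Delta'/\xi_\Delta$ is antisymmetric under $s\mapsto 1-s$. Substituting this into the integral along $\mathrm{Re}(s)=-1/2$ introduces a sign, and then the change of variables $s\mapsto 1-s$ sends the line $\mathrm{Re}(s)=-1/2$ to $\mathrm{Re}(s)=3/2$ while reversing orientation. The two sign flips combine to give exactly equation \eqref{intervalcov}, namely
\begin{equation*}
I \ = \ \sum_{\gamma_d}\phi(\gamma_d)-\frac{1}{2\pi i}\int_{\mathrm{Re}(s)=3/2}\frac{\xi_\Delta'(s,\chi_d)}{\xi_\Delta(s,\chi_d)}H(1-s)\,ds.
\end{equation*}

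Finally, subtracting this from the defining equation \eqref{intervaldef} causes $I$ to cancel on the left, and solving for the remaining sum produces the claimed identity
\begin{equation*}
\sum_{\gamma_d}\phi(\gamma_d) \ = \ \frac{1}{2\pi i}\int_{\mathrm{Re}(s)=3/2}\frac{\xi_\Delta'(s,\chi_d)}{\xi_\Delta(s,\chi_d)}[H(s)+H(1-s)]\,ds.
\end{equation*}
The only genuinely delicate point is justifying the contour shift: one needs $\xi_\Delta'/\xi_\Delta$ to grow at most polynomially in $|\mathrm{Im}(s)|$ on vertical strips, which is standard for completed $L$-functions and combines with the rapid decay of $H$ to kill the horizontal pieces. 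The algebraic symmetrization via the functional equation is then routine.
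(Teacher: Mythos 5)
Your argument is correct and is essentially identical to the paper's own derivation: define $I$ on $\mathrm{Re}(s)=3/2$, shift to $\mathrm{Re}(s)=-1/2$ collecting the zeros as residues, apply the functional equation and the substitution $s\mapsto 1-s$ to return to the original line with $H(1-s)$, and subtract. Your added remark on justifying the vanishing of the horizontal segments is a sensible (and standard) supplement to what the paper leaves implicit.
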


This result, when properly averaged over a finite subset of the family $\mathcal{F}$, will give us the one-level density.

\subsection{Analyzing the Sum Over  Zeros}

Having found an expression for $\sum_\gamma\phi(\gamma)$ for a fixed $d$, we wish to manipulate it into a more informative form before averaging over $d$ to obtain the one-level density for our family.  First we find more a more useful way to express the logarithmic derivative of $\xi_\Delta(s,\chi_d)$.  Taking the logarithmic derivative of equation \eqref{extended}, we have
\begin{align}
\frac{\xi_\Delta'(s,\chi_d)}{\xi_\Delta(s,\chi_d)} \ = \ \log\left(\frac{d}{2\pi}\right)+\frac{\Gamma'(s+11/2)}{\Gamma(s+11/2)}+\frac{L_\Delta'(s,\chi_d)}{L_\Delta(s,\chi_d)}\label{extendedld}
.\end{align}
It will also be useful to have the logarithmic derivative of equation \eqref{productform}, which is
\begin{align}
\frac{L_\Delta'(s,\chi_d)}{L_\Delta(s,\chi_d)} \ = \ &-\sum_p\log p\left(\frac{\frac{\alpha_p\chi_d(p)}{p^s}}{1-\frac{\alpha_p\chi_d(p)}{p^s}}+\frac{\frac{\overline{\alpha}_p\chi_d(p)}{p^s}}{1-\frac{\overline{\alpha}_p\chi_d(p)}{p^s}}\right)\nonumber
\\ \ = \ &-\sum_{k=1}^\infty\sum_p\log p\frac{(\alpha_p^k+\overline{\alpha}_p^k)\chi_d^k(p)}{p^{sk}}\label{productformld}
.\end{align}

Using equation \eqref{extendedld} we expand the logarithmic derivative in \eqref{yayzeros} and shift the contours of all terms except the $\frac{L_\Delta'(s,\chi_d)}{L_\Delta(s,\chi_d)}$ term to $Re(s)=\frac{1}{2}$.  This gives us
\begin{align}
\sum_{\gamma_d}\phi(\gamma_d) \ = \ I_1+I_2\label{intervalsplit}
\end{align}
where
\begin{align}
I_1 \ = \ \frac{1}{2\pi i}\int_{Re(s)=1/2}\left[\log\left(\frac{d}{2\pi}\right)+\frac{\Gamma'}{\Gamma}(s+11/2)\right][H(s)+H(1-s)]ds\label{intervalone}
\end{align}
and
\begin{align}
I_2 \ = \ \frac{1}{2\pi i}\int_{Re(s)=3/2}\frac{L_\Delta'(s,\chi_d)}{L_\Delta(s,\chi_d)}[H(s)+H(1-s)]ds\label{intervaltwo}
.\end{align}
The integral in \eqref{intervalone} with $s=\frac{1}{2}+iy$ is
\begin{align}
I_1 \ = \ &\frac{1}{2\pi i}\int_{-\infty}^\infty\left[\log\left(\frac{d}{2\pi}\right)+\frac{\Gamma'}{\Gamma}\left(\frac{1}{2}+iy+\frac{11}{2}\right)\right]2\phi(y)idy\nonumber
\\ \ = \ &\frac{1}{2\pi }\int_{-\infty}^\infty\left[2\log\left(\frac{d}{2\pi}\right)+2\frac{\Gamma'}{\Gamma}\left(6+iy\right)\right]\phi(y)dy\nonumber
\\ \ = \ &\frac{1}{2\pi }\int_{-\infty}^\infty\left[2\log\left(\frac{d}{2\pi}\right)+\frac{\Gamma'}{\Gamma}\left(6+iy\right)+\frac{\Gamma'}{\Gamma}\left(6-iy\right)\right]\phi(y)dy\label{intonesimp}
.\end{align}

We now analyze $I_2$.  Combining equations \ref{productformld} and \ref{intervaltwo}, we have
\begin{align}
I_2 \ = \ &-\frac{1}{2\pi i}\int_{Re(s)=3/2}\sum_{k=1}^\infty\sum_p\frac{(\alpha_p^k+\overline{\alpha}_p^k)\chi_d^k(p) \log p}{p^{ks}}[H(s)+H(1-s)]ds\label{inttworewritten}.\end{align}

We wish to switch the order of integration and summation (over $k$ and $p$).  To justify this, we will prove

\begin{lemma}
\begin{align}
\int_{Re(s) =  3/2}\sum_{k=1}^\infty\sum_p\left|\frac{(\alpha_p^k+\overline{\alpha}_p^k)\chi_d^k(p) \log p}{p^{ks}}[H(s)+H(1-s)]\right|ds<\infty\label{lessthaninfinity}.\end{align}
\end{lemma}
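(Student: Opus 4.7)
The plan is to prove that the integrand is absolutely integrable as a product of two factors: a decaying factor coming from the test function, and an absolutely convergent double sum over primes $p$ and powers $k$.

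First I would handle the arithmetic factor. Since $\alpha_p \overline{\alpha}_p = 1$ and $\alpha_p, \overline{\alpha}_p$ are complex conjugates (as discussed just before equation \eqref{productform}), both Satake parameters have absolute value $1$, so
\begin{align*}
|\alpha_p^k + \overline{\alpha}_p^k| \ \le \ 2.
\end{align*}
Since $|\chi_d(p)| \le 1$ and $|p^{-ks}| = p^{-3k/2}$ on the line $\text{Re}(s) = 3/2$, the prime-power coefficient is bounded by $2 \log p / p^{3k/2}$. Thus
\begin{align*}
\sum_{k=1}^\infty \sum_p \frac{|\alpha_p^k+\overline{\alpha}_p^k| \log p}{p^{3k/2}} \ \le \ 2\sum_p \frac{\log p}{p^{3/2}} + 2\sum_{k=2}^\infty \sum_p \frac{\log p}{p^{3k/2}},
\end{align*}
where the first sum converges since $3/2 > 1$, and the double tail is bounded by $C \sum_p (\log p)/p^3 < \infty$ uniformly in $k \ge 2$ (with a geometric factor in $k$). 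Hence the prime/power sum is finite and independent of $s$ on the line.

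Next I would handle the $H$-factor. Parametrizing $s = \tfrac32 + iy$, a direct computation from the definition \eqref{definitionh} gives $H(s) = \phi(y - i)$ and, using that $\phi$ is even, $H(1-s) = \phi(i-y) = \phi(y-i)$, so $|H(s) + H(1-s)| = 2|\phi(y-i)|$. Because $\phi$ is a Schwartz function whose Fourier transform has compact support in $(-\sigma,\sigma)$, Paley--Wiener guarantees that $\phi$ extends to an entire function satisfying the bound
\begin{align*}
|\phi(x + iy)| \ \le \ C_N (1 + |x|)^{-N} e^{2\pi\sigma |y|}
\end{align*}
for every $N \ge 0$. Setting $N = 2$ and evaluating at $y - i$ yields $|\phi(y-i)| \le C_2 e^{2\pi \sigma} (1+|y|)^{-2}$, which is integrable in $y$.

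Finally, the lemma follows by multiplying the uniform-in-$s$ bound for the prime/power sum by the integrable bound for $|H(s)+H(1-s)|$ and applying Tonelli's theorem to interchange the sums and the integral of the absolute value. The main step here is really the Paley--Wiener decay estimate; everything else is elementary. There is no substantive obstacle, only the bookkeeping of separating the $k=1$ term from $k \ge 2$ (to get absolute convergence over primes) and remembering that shifting $y \mapsto y - i$ is harmless because $\widehat\phi$ has compact support.
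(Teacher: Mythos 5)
Your proof is correct and follows essentially the same route as the paper: bound the double sum over $p$ and $k$ uniformly on the line $\mathrm{Re}(s)=3/2$ using $|\alpha_p^k+\overline{\alpha}_p^k|\le 2$, then use the compact support of $\hphi$ to get rapid decay of $H$ on vertical lines (the paper phrases this as ``$H(x+iy)$ is the Fourier transform of a Schwartz function,'' which is exactly your Paley--Wiener estimate). No gaps.
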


\begin{proof}
Note that
\begin{align}\sum_{k=1}^\infty\sum_p\left|\frac{(\alpha_p^k+\overline{\alpha}_p^k)\chi_d^k(p) \log p}{p^{ks}}\right|
\ \leq\ &\sum_{k=1}^\infty\sum_p\frac{\log{p}\cdot|\alpha^k_p+\overline{\alpha}_p^k|\cdot|\chi_d^k(p)|}{|p^{k(3/2+iy)}|}\nonumber
\\\ \leq\ &\sum_{k=1}^\infty\sum_p\frac{2\cdot 1 \cdot \log p}{p^{3k/2}\cdot|e^{iyk\log{p}}|}\nonumber
\\\ \leq\ &\sum_{k=1}^\infty\sum_p\frac{2\log{p}}{(p^k)^{3/2}}\nonumber
\\\ \leq\ &\sum_{n=1}^\infty\frac{2\log{n}}{n^{3/2}}\nonumber
\\ \ = \ &-2\zeta'(3/2),
\end{align} a constant independent of $s$ and $d$. Thus we have
\begin{align}
\int_{Re(s)=3/2}\sum_{k=1}^\infty&\sum_p\left|\frac{(\alpha_p^k+\overline{\alpha}_p^k)\chi_d^k(p) \log p}{p^{ks}}[H(s)+H(1-s)]\right|ds\nonumber
\\ \ = \ &\int_{Re(s)=3/2}|H(s)+H(1-s)|\sum_{k=1}^\infty\sum_p\left|\frac{(\alpha_p^k+\overline{\alpha}_p^k)\chi_d^k(p) \log p}{p^{ks}}\right|ds\nonumber
\\\ \leq\ &C\int_{Re(s)=3/2}|H(s)+H(1-s)|ds\nonumber
\\\ \leq\ &C\left(\int_{Re(s)=3/2}|H(s)|ds+\int_{Re(s)=3/2}|H(1-s)|ds\right)\label{jusths}
.\end{align}

To see that both integrals in equation \eqref{jusths} are convergent, note that \begin{align}
\phi(x) \ = \ &\int_{-\infty}^\infty\hphi(\omega)e^{2\pi i x\omega}d\omega\nonumber
\\\phi(x+iy) \ = \ &\int_{-\infty}^\infty\hphi(\omega)e^{2\pi i (x+iy)\omega}d\omega\nonumber
\\H(x+iy) \ = \ &\int_{-\infty}^\infty\left(\hphi(\omega)e^{2\pi (x-\frac{1}{2})\omega}\right)\cdot e^{2\pi iy\omega}d\omega\label{decayingh}
.\end{align}
For fixed $x$, $H(x+iy)$ is the Fourier transform of a Schwartz function (namely $\hphi(\omega)e^{2\pi (x-\frac{1}{2})}$), meaning that it itself is Schwartz.  This means that it decays faster than $1/{y^k}$ for any $k\in\mathbb{N}$, implying that both integrals converge. The claim (equation \eqref{lessthaninfinity}) follows.
\end{proof}

By the Fubini-Tonelli Theorem, we may switch summation and integration in \eqref{lessthaninfinity} as the absolute value leads to a finite integral in the product measure. Doing so, pulling out terms constant with respect to $s$, and noting that $1/{p^{ks}}=e^{-ks\log p}$, we may rewrite equation \eqref{inttworewritten} as
\begin{align}
I_2 \ = \ &-\frac{1}{2\pi i}\sum_{k=1}^\infty\sum_p (\alpha_p^k+\overline{\alpha}_p^k)\chi_d^k(p) \log p\int_{Re(s)=3/2}[H(s)+H(1-s)]e^{-ks\log p}ds\label{inttwoorderswitched}.\end{align}
We wish to shift our contour to $Re(s)=\frac{1}{2}$.  Consider the integral
\begin{align}
\int_C[H(s)+H(1-s)]e^{-ks\log p}ds\label{rectangleintegral}
\end{align}
where $C$ is the rectangle defined by the points $\frac{3}{2}+iM$, $\frac{3}{2}-iM$,  $\frac{1}{2}+iM$, and $\frac{1}{2}-iM$, where $M>0$. As there are no poles of our integrand, this integral equals $0$. (Note the original integrand did have poles from the zeros of the $L$-function; however, by switching the order of summation and integration and considering the integral for a fixed prime, we need only consider integrals of analytic functions.)  As $M\rightarrow\infty$, the horizontal components of the rectangle become negligible (since $H(x+iy)$ decays rapidly as $y$ increases), meaning that in the limit the two vertical components must cancel each other.  It follows that
\begin{align}\int_{Re(s)=3/2}[H(s)+H(1-s)]e^{-ks\log p}ds \ = \ \int_{Re(s)=1/2}[H(s)+H(1-s)]e^{-ks\log p}ds.
\end{align}

Shifting contours as described above and changing variables by $s=\frac{1}{2}+iy$, we have
\begin{align}
I_2 \ = \ &-\frac{1}{2\pi i}\sum_{k=1}^\infty\sum_p(\alpha_p^k+\overline{\alpha}_p^k)\chi_d^k(p)\log p\int_{-\infty}^\infty2\phi(y)e^{-k(1/2+iy)\log p}idy\nonumber
\\ \ = \ &-\frac{2}{2\pi }\sum_{k=1}^\infty\sum_p\frac{(\alpha_p^k+\overline{\alpha}_p^k)\chi_d^k(p)\log p}{p^{k/2}}\int_{-\infty}^\infty\phi(y)e^{-2\pi i y\frac{\log p^k}{2\pi}}dy\nonumber
\\ \ = \ &-\frac{2}{2\pi }\sum_{k=1}^\infty\sum_p\frac{(\alpha_p^k+\overline{\alpha}_p^k)\chi_d^k(p)\log p}{p^{k/2}}\hphi\left(\frac{\log p^k}{2\pi}\right)\label{inttwosimp}.
\end{align}
Combining equations \eqref{intonesimp} and \eqref{inttwosimp}, we have
\begin{align}
\sum_{\gamma_d}\phi(\gamma_d) \ = \ &\frac{1}{2\pi }\int_{-\infty}^\infty\left[2\log\left(\frac{d}{2\pi}\right)+\frac{\Gamma'}{\Gamma}\left(6+iy\right)+\frac{\Gamma'}{\Gamma}\left(6-iy\right)\right]\phi(y)dy\nonumber
\\&-\frac{2}{2\pi i}\sum_{k=1}^\infty\sum_p\frac{(\alpha_p^k+\overline{\alpha}_p^k)\chi_d^k(p)\log p}{p^{k/2}}\hphi\left(\frac{\log p^k}{2\pi}\right)\label{combinedsimps}
.\end{align}

To rewrite equation \eqref{combinedsimps}, we sum over twists $d$ and scale the zeros by the mean density of zeros arising from even fundamental discriminants at most $X$. One could instead consider the related quantities where each $L$-function's zeros are scaled by the logarithm of its conductor, a local instead of a global rescaling. Similar behavior is seen; see for example \cite{GM,Mil1} for such investigations.

We set
\begin{align}L \ = \ \log\left(\frac{X}{2\pi}\right)\label{definitionl}\end{align} (which is essentially the average log-conductor)
and replace $\phi(y)$ with
\begin{align}g(\nu) \ = \ \phi\left(y\right)\end{align}
where $\nu=y\cdot\frac{L}{\pi}$. It is a straightforward calculation that if $F(x)=G(cx)$ (where $c\neq 0$) and $\widehat{F}(\omega)$ is the Fourier transform of $F(x)$, then $\frac{1}{c}\widehat{F}\left(\frac{\omega}{c}\right)$ is the Fourier transform of $G(cx)$.  It follows that $\hat{\varphi}(\omega)=\frac{\pi}{L}\hat{g}\left(\frac{\pi}{L}\omega\right)$.  Summing over quadratic twists with even fundamental discriminant $d\leq X$ and dividing by $X^*$, the number of terms in the sum (which is proportional to $X$), we have proven a tractable explicit formula for the one-level density.

\begin{theorem}[Expansion for the one-level density]\label{thm:firsttractableexpansiononelevel} The one-level density for the family of twists of the Ramanujan tau function by even fundamental discriminants at most $X$ satisfies
\begin{align}
&\frac{1}{X^*}\sum_{d\leq X}\sum_{\gamma_d}g\left(\gamma_d\frac{L}{\pi}\right)\nonumber
\\ \ = \ &\frac{1}{2LX^* }\int_{-\infty}^\infty g(\nu)\sum_{d\leq{X}}\left[2\log\left(\frac{d}{2\pi}\right)+\frac{\Gamma'}{\Gamma}\left(6+i\frac{\pi\nu}{L}\right)+\frac{\Gamma'}{\Gamma}\left(6-i\frac{\pi\nu}{L}\right)\right]d\nu\nonumber
\\&-\frac{2}{2LX^*}\sum_{d\leq X}\sum_{k=1}^\infty\sum_p\frac{(\alpha_p^k+\overline{\alpha}_p^k)\chi_d^k(p)\log p}{p^{k/2}}\hat{g}\left(\frac{\log p^k}{2L}\right)\label{averaged},
\end{align} where $\phi$ is an even Schwartz function such that $\supp(\hphi)$ is contained in a bounded interval.
\end{theorem}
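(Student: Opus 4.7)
The plan is to prove the theorem by direct substitution: equation \eqref{combinedsimps} already gives the explicit formula for fixed $d$, so all that remains is to rescale the variable, apply the corresponding Fourier scaling identity to the prime sum, sum over $d \le X$, and divide by $X^\ast$.

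First, I would apply the change of variable $\nu = yL/\pi$ (so $dy = (\pi/L)\, d\nu$) to the integral term in \eqref{combinedsimps}. This is essentially bookkeeping: replacing $\phi(y)$ by $g(\nu)$ and $iy$ by $i\pi\nu/L$ converts
\begin{align*}
\frac{1}{2\pi}\int_{-\infty}^\infty \left[2\log\left(\frac{d}{2\pi}\right)+\frac{\Gamma'}{\Gamma}(6+iy)+\frac{\Gamma'}{\Gamma}(6-iy)\right]\phi(y)\,dy
\end{align*}
into
\begin{align*}
\frac{1}{2L}\int_{-\infty}^\infty g(\nu)\left[2\log\left(\frac{d}{2\pi}\right)+\frac{\Gamma'}{\Gamma}\left(6+\tfrac{i\pi\nu}{L}\right)+\frac{\Gamma'}{\Gamma}\left(6-\tfrac{i\pi\nu}{L}\right)\right]d\nu.
\end{align*}

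Next, I would handle the Fourier transform term. Using the scaling identity $\widehat{\phi}(\omega) = (\pi/L)\widehat{g}(\pi\omega/L)$ recorded just before the theorem, evaluating at $\omega = \log p^k/(2\pi)$ gives $\widehat{\phi}\!\left(\frac{\log p^k}{2\pi}\right) = (\pi/L)\widehat{g}\!\left(\frac{\log p^k}{2L}\right)$. Substituting this into the prime sum in \eqref{combinedsimps} and combining the constants produces
\begin{align*}
-\frac{2}{2L}\sum_{k=1}^\infty\sum_p \frac{(\alpha_p^k+\overline{\alpha}_p^k)\chi_d^k(p)\log p}{p^{k/2}}\,\widehat{g}\!\left(\frac{\log p^k}{2L}\right).
\end{align*}

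Finally, I would sum both sides over even fundamental discriminants $d \le X$ and divide by $X^\ast$. Since the integrand in the first term depends on $d$ only through $\log(d/(2\pi))$ and the integration is over $\nu$, I may interchange the sum with the integral (the sum is finite), yielding the stated formula. There is essentially no genuine obstacle: the explicit formula for fixed $d$ has already been established via contour shifting and Fubini, and all remaining work is tracking constants through a linear substitution and a Fourier rescaling. The only thing to be slightly careful about is that the rescaled test function $g$ and its transform $\widehat{g}$ inherit the Schwartz property and compact support condition from $\phi$ and $\widehat{\phi}$, so that the interchange of sum and integral from before the change of variable remains valid after.
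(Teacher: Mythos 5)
Your proposal is correct and follows exactly the paper's own route: starting from equation \eqref{combinedsimps}, performing the substitution $\nu = yL/\pi$, applying the Fourier rescaling $\hphi(\omega) = \frac{\pi}{L}\hat{g}(\frac{\pi}{L}\omega)$ at $\omega = \log p^k/(2\pi)$, and then averaging over $d \le X$. The constants check out, and your closing remark about $g$ and $\hat{g}$ inheriting the Schwartz and compact-support properties is the right (minor) point of care.
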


\subsection{Analyzing the One-Level Density}

We analyze the term
\begin{align}S \ = \ -\frac{2}{2LX^*}\sum_{d\leq X}\sum_{k=1}^\infty\sum_p\frac{(\alpha_p^k+\overline{\alpha}_p^k)\chi_d^k(p)\log p}{p^{k/2}}\hat{g}\left(\frac{\log p^k}{2L}\right).\label{sdefinition}
\end{align}
by splitting it into two sums:
\begin{align}S \ = \ S_{\rm even}+S_{\rm odd}
\end{align}
Specifically,
\begin{align}
S_{\rm even} \ = \ -\frac{1}{X^*}\sum_{d\leq X}\sum_{k=1}^\infty\sum_p\frac{(\alpha_p^{2k}+\overline{\alpha}_p^{2k})\chi_d^{2}(p)\log p}{p^{k}L}\hat{g}\left(\frac{\log p^k}{L}\right)\label{sevendefinition}
\end{align}
and
\begin{align}
S_{\rm odd} \ = \ -\frac{1}{X^*}\sum_{d\leq X}\sum_{k=0}^\infty\sum_p\frac{(\alpha_p^{2k+1}+\overline{\alpha}_p^{2k+1})\chi_d(p)\log p}{p^{(2k+1)/2}L}\hat{g}\left(\frac{\log p^{2k+1}}{L}\right).\label{sodddefinition}
\end{align}
No higher powers of $\chi_d(p)$ appear because $\chi_d$ is a quadratic character, implying that $\chi_d^{2k}(p) = \chi_d^2(p)$ and $\chi_d^{2k+1}(p)=\chi_d(p)$ for any positive integer $k$.

Note that
\be
\chi_d^2(p)  \ = \
\begin{cases}
1 \mbox{~if~} p \nmid d,\\
0 \mbox{~if~} p|d.
\end{cases}
\ee
This allows us to split $S_{\rm even}$ into
\be S_{\rm even} \ = \ S_{\rm even,1}+S_{\rm even,2}
\ee
where
\be
S_{\rm even,1}  \ = \  -\frac{1}{L}\sum_p \sum_{k = 1}^\infty  \frac{(\alpha_p^{2k} + \overline{\alpha}_p^{2k}) \log p}{p^k } \widehat{g}\left(\frac{\log p^k}{L} \right)
\ee
and
\be
S_{\rm even,2}  \ = \  \frac{1}{X^*} \sum_{d \leq X}\sum_{k = 1}^\infty \sum_{p|d} \frac{(\alpha_p^{2k} + \overline{\alpha}_p^{2k}) \log p}{p^k L} \widehat{g}\left(\frac{\log p^k}{L} \right)
\ee
(there is no $1/X^\ast$ in $S_{{\rm even},1}$ as that was canceled by the $X^\ast$ from the $d$-sum). We will analyze these two terms separately.

 Consider $S_{\rm even,1}$.  By means of standard techniques as seen in \cite{Mil3} and \cite{HMM}, we obtain
 \bea S_{{\rm even};1} & \ =
\ &  \frac{g(0)}2 + \frac{1}{L}\intii
g(\nu) \left(\frac{L'}{L}\left(1+\frac{2\pi i \nu}{L},{\rm sym}^2\Delta\right) - \frac{\zeta'}{\zeta}\left(1+\frac{2\pi i \nu}{L}\right)\right) d\nu
\nonumber\\ \eea
(A detailed proof of this can be found Appendix \ref{sec:appendixkeylemmas}.)

Consider $S_{\rm even,2}$.  Changing the order of summation, we may write
\begin{align}S_{\rm even,2}  \ = \ & \frac{1}{X^*} \sum_{d \leq X}\sum_{k = 1}^\infty \sum_{p|d} \frac{(\alpha_p^{2k} + \overline{\alpha}_p^{2k}) \log p}{p^k L} \widehat{g}\left(\frac{\log p^k}{L} \right)\nonumber
\\ \ = \ & \frac{1}{LX^*} \sum_{p} \sum_{k = 1}^\infty \frac{(\alpha_p^{2k} + \overline{\alpha}_p^{2k}) \log p}{p^k } \widehat{g}\left(\frac{\log p^k}{L} \right)\sum_{\substack{d \leq X \\ p|d}}1\label{sumofone}
.\end{align}
By Lemma \ref{firstmillerlemma} (proven in Appendix \ref{sec:appendixkeylemmas}), we have
\be
\sum_{\substack{d \leq X \\ p|d}}1  \ = \ \frac{X^*}{p+1}+O(X^{1/2})\label{sumofone2}.
\ee 
Plugging \eqref{sumofone2} into \eqref{sumofone} yields
\be S_{\rm even,2} \ = \ \frac{1}{L} \sum_{p} \sum_{k = 1}^\infty  \frac{(\alpha_p^{2k} + \overline{\alpha}_p^{2k}) \log p}{p^k(p+1) } \widehat{g}\left(\frac{\log p^k}{L} \right)+O(X^{-1/2})\label{loglog},
\ee where we used Lemma \ref{firstmillerlemma} to note that $X^\ast = 3X/\pi^2 + O(X^{1/2})$. To see that the error term is $P(X^{-1/2})$, note the error term is bounded by
\begin{align}\frac{O(X^{1/2})}{LX^*} \sum_{p} \sum_{k = 1}^\infty \frac{\left|\alpha_p^{2k} + \overline{\alpha}_p^{2k}\right| \log p}{p^k } \widehat{g}\left(\frac{\log p^k}{L} \right)
.\end{align} As remarked, by Lemma \ref{firstmillerlemma} we have $O(X^{1/2}/X^*)  = O(X^{-1/2})$.  We  next note that the sum over $k \ge 2$ is trivially seen to be $O(1)$, and by Mertens' theorem (which states $\sum_{p \le X^\sigma} \log p / p = \log X^\sigma + O(1)$), the contribution from $k=1$ divided by $L$ (which is of size $\log X$) is also $O(1)$. This completes the proof of the size of the error term in \eqref{loglog} for $S_{\rm even,2}$. 

We now turn to the analysis of the main term of $S_{\rm even,2}$ in \eqref{loglog}. Note that
\be
\widehat{g}\left(\frac{\log p^k}{L} \right) \ = \ \int_{-\infty}^\infty g(\nu)e^{-2\pi i\frac{\log p^k}{L}}d\nu=\int_{-\infty}^\infty g(\nu)p^{\frac{-2\pi i\nu}{L}k}d\nu.\label{undofourier}
\ee
Combining \eqref{loglog} and \eqref{undofourier}, we have
\begin{align} \nonumber
S_{\rm even,2} &  \ = \  \frac{1}{L} \sum_{p}\sum_{k = 1}^\infty \frac{(\alpha_p^{2k} + \overline{\alpha}_p^{2k}) \log p}{p^k(p + 1)} \int_{-\infty}^\infty g(\nu)p^{-\frac{2\pi i \nu}{L}k}d\nu + O(X^{-1/2}) \\ \nonumber
&  \ = \  \frac{1}{L} \sum_{p}\sum_{k = 1}^\infty \frac{(\alpha_p^{2k} + \overline{\alpha}_p^{2k}) \log p}{p^k(p + 1)} \int_{-\infty}^\infty g(\nu)p^{-\frac{2\pi i \nu}{L}k}d\nu + O(X^{-1/2}) \\
&  \ = \  \frac{1}{L} \int_{-\infty}^\infty g(\nu) \sum_{p} \frac{\log p}{(p + 1)} \sum_{k = 1}^\infty \frac{(\alpha_p^{2k} + \overline{\alpha}_p^{2k}) }{p^{k(1 + \frac{2 \pi i \nu}{L})}} d\nu + O(X^{-1/2}).
\end{align}

We will now bound $S_{\rm odd}$.  The following lemma and the proof thereof were modified with permission from \cite{Mil3}.

\begin{lemma} For $\text{supp}(\hat{g})\subset (-\sigma,\sigma)$, we have $S_{{\rm odd}} = O(X^{-\frac{1-\sigma}2}
\log^6 X)$.
\end{lemma}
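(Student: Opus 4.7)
The plan is to bound $S_{\rm odd}$ by interchanging orders of summation so that the innermost sum is over fundamental discriminants, and then exploiting cancellation in the resulting quadratic character sum. Since $\chi_d$ is quadratic, $\chi_d^{2k+1}=\chi_d$, so the inner sum reduces to $T(p):=\sum_{d\leq X}\chi_d(p)$ (with $d$ ranging over even fundamental discriminants), independent of $k$. The support hypothesis $\supp(\hat g)\subset(-\sigma,\sigma)$ truncates the $p$-sum at $p^{2k+1}\leq(X/2\pi)^\sigma$, i.e., $p\leq X^{\sigma/(2k+1)}$, which is the crucial restriction that allows us to beat the trivial bound.

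The key input is a uniform-in-$p$ estimate $T(p)\ll X^{1/2}\log^a X$ for some absolute constant $a$, valid because $p$, being prime, is never a perfect square. Such a bound arises from applying Poisson summation to the $d$-sum after using quadratic reciprocity to express $\chi_d(p)$ as a character in $d$ of small modulus, together with a M\"obius inversion to remove the fundamental-discriminant constraint; the resulting Gauss sums can be controlled by classical estimates. Versions of this appear in the prior work \cite{Mil3,HMM}, and I would appeal to one of those rather than redo the Poisson step in detail.

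Plugging in this bound, together with the trivial estimate $|\alpha_p^{2k+1}+\overline{\alpha}_p^{2k+1}|\leq 2$, yields
\begin{equation}
|S_{\rm odd}|\ \ll\ \frac{X^{1/2}\log^a X}{X^\ast L}\,\|\hat g\|_\infty\sum_{k=0}^\infty\sum_{p\leq X^{\sigma/(2k+1)}}\frac{\log p}{p^{(2k+1)/2}}.
\end{equation}
For $k=0$, partial summation with Chebyshev's theorem gives $\sum_{p\leq Y}(\log p)/p^{1/2}\ll Y^{1/2}$, so after using $X^\ast\asymp X$ and $L\asymp\log X$ the $k=0$ contribution is $O(X^{-(1-\sigma)/2}\log^{a-1}X)$. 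For $k\geq 1$, the inner sum converges to a constant, giving a contribution of $O(X^{-1/2}\log^{a-1}X)$, which is subdominant. Combining these bounds yields the claimed estimate, with the specific exponent $6$ on $\log X$ reflecting accumulated logarithmic losses.

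The main obstacle is establishing the uniform-in-$p$ bound on $T(p)$: a na\"ive P\'olya--Vinogradov estimate yields only $O(p^{1/2}\log p)$, which is useless when $p$ grows like $X^\sigma$. A genuine Poisson summation analysis is required, and tracking the precise power of $\log X$ accumulated through Poisson summation, M\"obius inversion for the fundamental-discriminant indicator, and Gauss sum estimates is the bookkeeping challenge. Any fixed polynomial-in-$\log X$ bound on $T(p)$ suffices to yield a final error of $O(X^{-(1-\sigma)/2+\varepsilon})$, which is the size needed for Theorem~\ref{thm:mainnumbequalsratios}.
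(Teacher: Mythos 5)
Your overall architecture (reduce to the character sum $T(p)=\sum_{d\le X}\chi_d(p)$, use the support of $\hat g$ to truncate at $p^{2k+1}\le X^{\sigma}$, then sum the remaining weights) matches the paper's, but the key input you invoke is a genuine gap. You assume a \emph{pointwise} bound $T(p)\ll X^{1/2}\log^{a}X$ uniformly for all primes $p\le X^{\sigma}$. No such bound is known, and the method you sketch does not produce it. Carrying out Poisson summation (or P\'olya--Vinogradov) on the $d$-sum after M\"obius inversion for the fundamental-discriminant condition, one writes $T(p)$ as $\sum_{m\le\sqrt X}\mu(m)\sum_{\ell\le X/m^2}(\tfrac{\ell}{p})[\cdots]$ and bounds each inner sum by $\min(X/m^2,\sqrt{p}\,(\log p\ \text{or}\ X^{\epsilon}))$; summing over $m$ yields $T(p)\ll X^{1/2}p^{1/4+\epsilon}$, not $X^{1/2}\log^{a}X$. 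Feeding $X^{1/2}p^{1/4+\epsilon}$ into your display gives a $k=0$ contribution of size $X^{-1/2+3\sigma/4+\epsilon}$, which is strictly worse than the claimed $X^{-(1-\sigma)/2}\log^6X$ and is not even $o(1)$ once $\sigma\ge 2/3$. So the proof as written establishes the lemma, if at all, only for $\sigma<2/3$ and with a weaker exponent. Neither \cite{Mil3} nor \cite{HMM} contains the uniform pointwise estimate you appeal to; both use the same averaged input as this paper.

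The missing idea is that one only needs $T(p)$ to be of size $X^{1/2}\log^{5}X$ \emph{on average} over $p$, not for each $p$. The paper gets this from Jutila's mean-value theorem, $\sum_{1<n\le N,\ n\neq\square}\bigl|\sum_{d\le X}\chi_d(n)\bigr|^2\ll NX\log^{10}N$ (applicable since $p^{2k+1}$ is never a square), combined with the Cauchy--Schwarz inequality: the weight factor contributes $\bigl(\sum_{n\le X^{\sigma}}n^{-1}\bigr)^{1/2}\ll\log^{1/2}X$ and the character-sum factor contributes $X^{(1+\sigma)/2}\log^5X$, giving $|S_{\rm odd}|\ll X^{-(1-\sigma)/2}\log^6X$ for the full range $\sigma<1$. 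If you want to keep a ``plug in a bound for $T(p)$ and sum'' structure, you must replace your pointwise claim by this second-moment estimate and insert the Cauchy--Schwarz step; otherwise the argument does not close.
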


\begin{proof} Jutila's bound (see equation (3.4) of \cite{Ju3}) is \be \sum_{1 < n \le N
\atop n\ {\rm non-square}} \ \left| \sum_{0 < d \le X \atop d\ {\rm
fund.\ disc.}}\ \chi_d(n)\right|^2 \ \ll \ N X \log^{10} N \ee where
the $d$-sum, per usual, is over even fundamental discriminants at most $X$. As
$2k+1$ is odd, $p^{2k+1}$ is never a square. The bound above is of non-negative numbers summed over all non-squares; as our sum is over a subset, Jutila's bound holds for us as well. Thus Jutila's
bound gives \be \left(\sum_{\ell=0}^\infty \sum_{p^{(2\ell+1)/2} \le
X^\sigma} \left| \sum_{d \le X} \chi_d(p)\right|^2 \right)^{1/2} \
\ll \ X^{\frac{1+\sigma}2} \log^5 X. \ee
 Recall
\bea S_{{\rm odd}}
&\ =\ & -\frac{1}{X^\ast} \sum_{k=0}^\infty \sum_p \frac{(\alpha_p^{2k+1}+\overline{\alpha}_p^{2k+1})\log
p}{p^{(2k+1)/2} L}\ \hg\left(\frac{\log p^{2k+1}}{L}\right) \sum_{d\le X} \chi_d(p), \eea with $|\alpha_p^{2k+1}+\overline{\alpha}_p^{2k+1}| \le 2$.
Applying the Cauchy-Schwartz inequality and pulling out 2 for $|\alpha_p^{2k+1}+\overline{\alpha}_p^{2k+1}|$, we have \bea |S_{{\rm odd}}| &\ \le \ &
\frac2{X^\ast}\left(\sum_{k=0}^\infty \sum_{p^{2k+1} \le
X^\sigma} \left|\frac{\log p}{p^{(2k+1)/2} L}\
\hg\left(\frac{\log p^{2k+1}}{L}\right)\right|^2\right)^{1/2} \nonumber\\ & & \ \ \cdot \
\left(\sum_{k=0}^\infty \sum_{p^{2k+1} \le X^\sigma}
\left|\sum_{d\le X} \chi_d(p)\right|^2\right)^{1/2} \nonumber\\ &\ll
& \frac2{X^\ast}\left(\sum_{n \le X^\sigma} \frac1{n}\right)^{1/2}
\cdot X^{\frac{1+\sigma}2} \log^5 X \nonumber\\ &\ll &
X^{-\frac{1-\sigma}2}  \log^6 X. \eea
\end{proof}

As $\sigma>0$, this is a larger error term than the $O(X^{-1/2})$ we have from $S_{\rm{even}}$, and thus absorbs that error term. Taking all these pieces together, we find that the number theoretic calculations of the one-level density give us
\begin{align}
&\frac{1}{X^*}\sum_{d\leq X}\sum_{\gamma_d}g\left(\gamma_d\frac{L}{\pi}\right)\nonumber
\\= \ &\frac{1}{2LX^* }\int_{-\infty}^\infty g(\nu)\sum_{d\leq{X}}\left[2\log\left(\frac{d}{2\pi}\right)+\frac{\Gamma'}{\Gamma}\left(6+i\frac{\pi\nu}{L}\right)+\frac{\Gamma'}{\Gamma}\left(6-i\frac{\pi\nu}{L}\right)\right]d\nu\nonumber
\\&+\frac{g(0)}{2}+\frac{1}{L}\intii
g(\nu) \left(\frac{L'}{L}\left(1+\frac{2\pi i \nu}{L},{\rm sym}^2\Delta\right) - \frac{\zeta'}{\zeta}\left(1+\frac{2\pi i \nu}{L}\right)\right) d\nu\nonumber
\\&+ \frac{1}{L} \int_{-\infty}^\infty g(\nu) \sum_{p} \frac{\log p}{(p + 1)} \sum_{k = 1}^\infty \frac{(\alpha_p^{2k} + \overline{\alpha}_p^{2k}) }{p^{k(1 + \frac{2 \pi i \nu}{L})}} d\nu+ O(X^{-\frac{1-\sigma}2}
\log^6 X)  \nonumber\\
=\ &\frac{g(0)}{2}+\frac{1}{2LX^* }\int_{-\infty}^\infty g(\nu)\left(\sum_{d\leq {X}}\left[2\log\left(\frac{d}{2\pi}\right)+\frac{\Gamma'}{\Gamma}\left(6+i\frac{\pi\nu}{L}\right)+\frac{\Gamma'}{\Gamma}\left(6-i\frac{\pi\nu}{L}\right)\right]\right.\nonumber
\\&+2\left(\frac{L'}{L}\left(1+\frac{2\pi i \nu}{L},{\rm sym}^2\Delta\right) - \frac{\zeta'}{\zeta}\left(1+\frac{2\pi i \nu}{L}\right)\right.
\left.\left.+ \sum_{p} \frac{\log p}{(p + 1)} \sum_{k = 1}^\infty \frac{(\alpha_p^{2k} + \overline{\alpha}_p^{2k}) }{p^{k(1 + \frac{2 \pi i \nu}{L})}} \right)\right)d\nu \nonumber
\\&+ O(X^{-\frac{1-\sigma}2}\log^6 X)\label{ntpred}
.\end{align}

To have a power savings in the error term, we require $\sigma<1$.  Thus we have proven the first part of Theorem \ref{thm:mainnumbequalsratios}, namely that equation \eqref{ntpred} is the one-level density for the family of quadratic twists of the tau $L$-function for suitably restricted test functions.

Having calculated the one-level density on the number theory side, we now compare it to the Ratios Conjecture's predictions for the one-level density.

\newpage

\section{Comparison With the Ratios Conjectures' Predictions}\label{section3}

Using the Ratios Conjecture, Conrey and Snaith \cite{CS1} compute the one-level density for our family to be
\begin{align}
\sum_{d\leq X}\sum_{\gamma_d}\phi\left(\gamma_d\right)\ = \ &\frac{1}{2\pi }\int_{-\infty}^\infty \phi(y)\left(\sum_{d\leq X}\left[2\log\left(\frac{d}{2\pi}\right)+\frac{\Gamma'}{\Gamma}\left(6+iy\right)+\frac{\Gamma'}{\Gamma}\left(6-iy\right)\right]\right.\nonumber
\\&+2\left(-\frac{\zeta'}{\zeta}(1+2iy)+\frac{L'_\Delta}{L_\Delta}({\rm sym^2},1+2iy)+B'_\Delta(iy;iy)\right.\nonumber
\\&\left.\left.-\left(\frac{d}{2\pi}\right)^{-2iy}\frac{\Gamma(6-iy)}{\Gamma(6+iy)}\frac{\zeta(1+2iy)L_\Delta({\rm sym^2},1-2iy)}{L_\Delta({\rm sym^2},1)}B_\Delta(-iy,iy)\right)\right)dy \nonumber
\\&+ O(X^{1/2+\epsilon})\label{rcpredunscaled}
\end{align}where
\be L_\Delta\left({\rm sym^2},s\right)=\prod_p\left(1-\frac{\alpha^2_p}{p^s}\right)^{-1}\left(1-\frac{1}{p^s}\right)^{-1}\left(1-\frac{\overline{\alpha}^2_p}{p^s}\right)^{-1},\label{symdef}
\ee
\begin{align}
B_{\Delta}(\alpha;\gamma)=\prod_p&\left(1+\frac{p}{p+1}\left(\sum_{m=1}^\infty\frac{\tau^*(p^{2m})}{p^{m(1+2\alpha)}}\right.\right.\nonumber
\\-&\left.\left.\frac{\tau^*(p)}{p^{1+\alpha+\gamma}}\sum_{m=0}^\infty\frac{\tau^*(p^{2m+1})}{p^{m(1+2\alpha)}}+\frac{1}{p^{1+2\gamma}}\sum_{m=0}^\infty\frac{\tau^*(p^{2m})}{p^{m(1+2\alpha)}} \right) \right)\nonumber
\\\times &\frac{\left(1-\frac{\tau^*(p^2)}{p^{1+2\alpha}}+\frac{\tau^*(p^2)}{p^{2+4\alpha}}-\frac{1}{p^{3+6\alpha}}\right)\left(1-\frac{1}{p^{1+2\gamma}}\right)}{\left(1-\frac{\tau^*(p^2)}{p^{1+\alpha+\gamma}}+\frac{\tau^*(p^2)}{p^{2+2\alpha+2\gamma}}-\frac{1}{p^{3+3\alpha+3\gamma}}\right)\left(1-\frac{1}{p^{1+\alpha+\gamma}}\right)},
\end{align}
and where the derivative of $B_\Delta$ is with respect to $\alpha$.
Again setting $\phi(y)=g(\nu)$ and dividing by ${X^*}$, this equation becomes
\begin{align}
&  \frac{1}{X^*}\sum_{d\leq X}\sum_{\gamma_d}g\left(\gamma_d\frac{L}{\pi}\right)\ = \ \nonumber
\\& \frac{1}{2LX^* }\int_{-\infty}^\infty g(\nu)\left(\sum_{d\leq X}\left[2\log\left(\frac{d}{2\pi}\right)+\frac{\Gamma'}{\Gamma}\left(6+i\frac{\pi\nu}{L}\right)+ \frac{\Gamma'}{\Gamma}\left(6-i\frac{\pi\nu}{L}\right)\right]\right.\nonumber
\\&\ \ \ +\ 2\left(-\frac{\zeta'}{\zeta}\left(1+2i\frac{\pi\nu}{L}\right)+\frac{L'_\Delta}{L_\Delta}\left({\rm sym^2},1+2i\frac{\pi\nu}{L}\right)+B'_\Delta\left(i\frac{\pi\nu}{L};i\frac{\pi\nu}{L}\right)\right.\nonumber
\\&\left.\left.\ \ \ -\ \left(\frac{d}{2\pi}\right)^{-2i\frac{\pi\nu}{L}}\frac{\Gamma\left(6-i\frac{\pi\nu}{L}\right)}{\Gamma\left(6+i\frac{\pi\nu}{L}\right)}\frac{\zeta\left(1+2i\frac{\pi\nu}{L}\right)L_\Delta\left({\rm sym^2},1-2i\frac{\pi\nu}{L}\right)}{L_\Delta({\rm sym^2},1)}B_\Delta\left(-i\frac{\pi\nu}{L},i\frac{\pi\nu}{L}\right)\right)\right)d\nu \nonumber
\\&\ \ \ +\ O(X^{-1/2+\epsilon}).\label{rcpred}
\end{align}

We will show that equations \eqref{ntpred} and \eqref{rcpred} agree up $O(X^{-({1-\sigma})/2+\varepsilon})$, a power savings error term when $\sigma<1$.  As the two expressions agree in their general form, in the $2\log\left(d/2\pi\right)+\frac{\Gamma'}{\Gamma}\left(6+i\nu\right)+\frac{\Gamma'}{\Gamma}\left(6-i\nu\right)$ term, and in the $-\frac{\zeta'}{\zeta}\left(1+2i\frac{\pi\nu}{L}\right)+\frac{L'_\Delta}{L_\Delta}\left({\rm sym^2},1+2i\frac{\pi\nu}{L}\right)$ term, we need only analyze the two remaining terms of equation \eqref{rcpred}, showing the first one are equal to the remaining term in \eqref{ntpred} and bounding the second one as our error term (plus a constant $g(0)/2$ which balances the corresponding constant of the other equation).  We will frequently use the variable $y$ (equal to ${\pi\nu}/{L}$) for notational convenience.

\subsection{The Derivative of $B_\Delta$}

We will consider the $B'_\Delta(iy;iy)$ term, and show that it equals
\be
\sum_p\frac{\log p}{p+1}\sum_{m=1}^\infty\frac{\alpha_p^{2m}+\overline{\alpha}_p^{2m}}{p^{m(1+2i\nu)}}\label{equaltob}.
\ee Recall from \cite{CS1} that $B_\Delta(r;r)=1$ (as can be verified by direct substitution).  For notational convenience, we define
\begin{align}
f_1(\alpha;\gamma)=&1+\frac{p}{p+1}\left(\sum_{m=1}^\infty\frac{\tau^*(p^{2m})}{p^{m(1+2\alpha)}}-\frac{\tau^*(p)}{p^{1+\alpha+\gamma}}\sum_{m=0}^\infty\frac{\tau^*(p^{2m+1})}{p^{m(1+2\alpha)}}\right)+\frac{1}{p^{1+2\gamma}}\sum_{m=0}^\infty\frac{\tau^*(p^{2m})}{p^{m(1+2\alpha)}}
\\ f_2(\alpha;\gamma)=&1-\frac{\tau^*(p^2)}{p^{1+2\alpha}}+\frac{\tau^*(p^2)}{p^{2+4\alpha}}-\frac{1}{p^{3+6\alpha}}
\\f_3(\alpha;\gamma)=&1-\frac{1}{p^{1+2\gamma}}
\\f_4(\alpha;\gamma)=&1-\frac{\tau^*(p^2)}{p^{1+\alpha+\gamma}}+\frac{\tau^*(p^2)}{p^{2+2\alpha+2\gamma}}-\frac{1}{p^{3+3\alpha+3\gamma}}
 \\f_5(\alpha;\gamma)=&1-\frac{1}{p^{1+\alpha+\gamma}},
 \end{align} giving us
 \be B_{\Delta}(\alpha;\gamma)=\prod_p f_1(\alpha;\gamma)\cdot\frac{f_2(\alpha;\gamma)f_3(\alpha;\gamma)}{f_4(\alpha;\gamma)f_5(\alpha;\gamma)}\label{bsimpler}.
 \ee

 \begin{lemma}\bea
& &  \frac{\partial B}{\partial \alpha}(r;r)\ = \ \nonumber\\ & & \sum_p\log p\left(\frac{1}{p+1}\sum_{m=1}^\infty\frac{\alpha_p^{2m}+\overline{\alpha}_p^{2m}}{p^{m(1+2r)}}-\sum_{m=1}^\infty\frac{\alpha_p^{2m}+\overline{\alpha}_p^{2m}}{p^{m(1+2r)}}\right.\nonumber
\\& &\,\,\,\,\,\,\,+\left.\frac{\frac{\tau^*(p^2)}{p^{1+2r}}-\frac{2\cdot\tau^*(p^2)}{p^{2+4r}}+\frac{3}{p^{3+6r}}}{1-\frac{\tau^*(p^2)}{p^{1+2r}}+\frac{\tau^*(p^2)}{p^{2+4r}}-\frac{1}{p^{3+6r}}}+\frac{1}{1-p^{1+2r}}\right).
\nonumber\\
\eea
 \end{lemma}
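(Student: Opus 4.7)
The plan is to prove the lemma by logarithmic differentiation of the Euler product representation \eqref{bsimpler}. Since the paper has already recalled that $B_\Delta(r;r)=1$, we have
\begin{equation*}
\frac{\partial B_\Delta}{\partial \alpha}(r;r) \ = \ B_\Delta(r;r)\cdot\frac{\partial \log B_\Delta}{\partial \alpha}(r;r) \ = \ \sum_p \left(\frac{\partial_\alpha f_1}{f_1} + \frac{\partial_\alpha f_2}{f_2} + \frac{\partial_\alpha f_3}{f_3} - \frac{\partial_\alpha f_4}{f_4} - \frac{\partial_\alpha f_5}{f_5}\right)\bigg|_{\alpha=\gamma=r}.
\end{equation*}
Because $f_3(\alpha;\gamma) = 1 - p^{-(1+2\gamma)}$ does not depend on $\alpha$, its contribution vanishes, and we are left with four per-prime contributions whose sum I will match term by term against the claim.

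Next I would dispose of the three ``polynomial'' factors $f_2, f_4, f_5$, which are the easy part. Each is a finite combination of negative powers of $p$, so its logarithmic $\alpha$-derivative at $(r;r)$ is obtained by a single application of $\partial_\alpha p^{-(c+k\alpha)} = -k(\log p)\, p^{-(c+k\alpha)}$. A short direct calculation shows that $\partial_\alpha f_2/f_2(r;r)$ gives exactly twice the ratio appearing in the third summand of the claim, while $\partial_\alpha f_4/f_4(r;r)$ gives that same ratio once, so the combined $\partial_\alpha f_2/f_2 - \partial_\alpha f_4/f_4$ at $(r;r)$ yields the third summand as stated. Similarly $-\partial_\alpha f_5/f_5(r;r) = \log p/(1-p^{1+2r})$, producing the fourth summand.

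The remaining step is to show that $\partial_\alpha f_1/f_1$ at $(r;r)$ produces the first two sums in the claim. For this I would use the explicit formula $\tau^*(p^n) = \sum_{j=0}^{n}\alpha_p^{n-2j}$, which follows from $\alpha_p\overline\alpha_p = 1$ together with the Hecke relation \eqref{tauid}. Substituting this into the generating functions $\sum_{m\ge 0}\tau^*(p^{2m})/p^{ms}$ and $\sum_{m\ge 0}\tau^*(p^{2m+1})/p^{ms}$ that appear inside $f_1$ closes them into rational functions of $\alpha_p^2/p^s$, $1/p^s$, and $\overline\alpha_p^2/p^s$ (essentially the local symmetric-square factor). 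Differentiating in $\alpha$, evaluating at $\alpha=\gamma=r$, and repeatedly invoking $\alpha_p\overline\alpha_p = 1$ should collapse the output to the two displayed series in the unknowns $(\alpha_p^{2m}+\overline\alpha_p^{2m})/p^{m(1+2r)}$. The coefficient $1/(p+1)$ in the first of these sums traces back directly to the $p/(p+1)$ prefactor in the definition of $f_1$, which is the distinguishing arithmetic feature of this family.

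The main obstacle is precisely this last algebraic simplification: $f_1$ interleaves two distinct series with different starting indices and different coefficients, so differentiation and specialization to the diagonal $\alpha=\gamma=r$ produce many cross terms whose cancellation requires careful tracking. Conceptually, however, the computation is routine once the closed forms of the $\tau^*$ generating functions are in hand, and no new ideas beyond the Hecke recursion, the identity $\alpha_p\overline\alpha_p=1$, and geometric summation are needed.
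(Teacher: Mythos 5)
Your proposal is correct and follows essentially the same route as the paper: logarithmic differentiation of the Euler product, the vanishing of the $f_3$ contribution, the observation that $\partial_\alpha f_2/f_2 - \partial_\alpha f_4/f_4$ at $(r;r)$ leaves a single copy of the displayed ratio, and the identification $-\partial_\alpha f_5/f_5 = \log p/(1-p^{1+2r})$. The only (cosmetic) difference is in the $f_1$ factor, where the paper differentiates the series termwise and telescopes via the Hecke relation $\tau^*(p)\tau^*(p^{2m-1})=\tau^*(p^{2m})+\tau^*(p^{2m-2})$ rather than first closing the generating functions into rational form; both reduce to the same algebra, though you should also record explicitly (as the paper does) that $f_1(r;r)=1$ so that $\partial_\alpha f_1/f_1$ reduces to $\partial_\alpha f_1$ on the diagonal.
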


 \begin{proof} By taking the logarithmic derivative, we reduce the product of our five functions to a sum, allowing us to more easily compute it piece by piece.
 Taking the logarithmic partial derivative of \eqref{bsimpler} with respect to $\alpha$, we have
 \begin{align}
 \frac{\frac{\partial B}{\partial\alpha}(\alpha;\gamma)}{B(\alpha,\gamma)}=&\frac{\partial}{\partial\alpha}\log\left(\prod_p f_1(\alpha;\gamma)\cdot\frac{f_2(\alpha;\gamma)f_3(\alpha;\gamma)}{f_4(\alpha;\gamma)f_5(\alpha;\gamma)}\right)\nonumber
 \\ \ = \ &\frac{\partial}{\partial\alpha}\sum_p\log\left(f_1(\alpha;\gamma)\cdot\frac{f_2(\alpha;\gamma)f_3(\alpha;\gamma)}{f_4(\alpha;\gamma)f_5(\alpha;\gamma)}\right)\nonumber
 \\ \ = \ &\frac{\partial}{\partial\alpha}\sum_p\left(\log(f_1(\alpha;\gamma))+\log(f_2(\alpha;\gamma))+\log(f_3(\alpha;\gamma))\right.\nonumber
 \\   &\left.\,\,\,\,\,\,\,\,\,\,\,\,\,\,\,\,\,\,-\log(f_4(\alpha;\gamma))-\log(f_5(\alpha;\gamma))\right)\nonumber
 \\ \ = \ &\sum_p\left(\frac{f'_1(\alpha;\gamma)}{f_1(\alpha;\gamma)}+\frac{f'_2(\alpha;\gamma)}{f_2(\alpha;\gamma)}+\frac{f'_3(\alpha;\gamma)}{f_3(\alpha;\gamma)}-\frac{f'_4(\alpha;\gamma)}{f_4(\alpha;\gamma)}-\frac{f'_5(\alpha;\gamma)}{f_5(\alpha;\gamma)}\right),
 \end{align}
 where the derivatives $f'_i$ are with respect to $\alpha$.  Since $B(r;r)=1$, we have
 \be
 \frac{\partial B}{\partial \alpha}(r;r)=\sum_p\left(\frac{f'_1(r;r)}{f_1(r;r)}+\frac{f'_2(r;r)}{f_2(r;r)}+\frac{f'_3(r;r)}{f_3(r;r)}-\frac{f'_4(r;r)}{f_4(r;r)}-\frac{f'_5(r;r)}{f_5(r;r)}\right).
 \ee
 We shall evaluate each of these logarithmic derivatives at $\alpha=\gamma=r$.

 Note that $f_1(r;r)=1$.  Taking the derivative of $f_1(\alpha;\gamma)$ with respect to $\alpha$, we have
 \begin{align}
 f'_1(\alpha;\gamma)\ = \ &\frac{p}{p+1}\left(\sum_{m=1}^\infty\frac{-2m\log p\cdot\tau^*(p^{2m})}{p^{m(1+2\alpha)}}-\sum_{m=0}^\infty\frac{-(2m+1)\log p\cdot\tau^*(p^{2m+1})\tau^*(p)}{p^{m(1+2\alpha)+1+\alpha+\gamma}}\right.\nonumber
 \\&+\left.\frac{1}{p^{1+2\gamma}}\sum_{m=0}^\infty\frac{-2m\log p\cdot\tau^*(p^{2m})}{p^{m(1+2\alpha)}} \right)\nonumber
 \\ \ = \ &-\frac{p\log p}{p+1}\left(\sum_{m=1}^\infty\frac{2m\tau^*(p^{2m})}{p^{m(1+2\alpha)}}-\sum_{m=0}^\infty\frac{(2m+1)\tau^*(p^{2m+1})\tau^*(p)}{p^{m(1+2\alpha)+1+\alpha+\gamma}}\right.\nonumber
 \\ & \,\,\,\,\,\,\,\,\,\,\,\,\,\,\,\,\,\,\,\,\,\,\,\,\,\,\,\,+\left.\frac{1}{p^{1+2\gamma}}\sum_{m=0}^\infty\frac{2m\tau^*(p^{2m})}{p^{m(1+2\alpha)}} \right).
 \end{align}
 Plugging in $\alpha=\gamma=r$, we have
 \begin{align}
 f_1'(r;r)=&-\frac{p\log p}{p+1}\left(\sum_{m=1}^\infty\frac{2m\tau^*(p^{2m})}{p^{m(1+2r)}}-\sum_{m=0}^\infty\frac{(2m+1)\tau^*(p^{2m+1})\tau^*(p)}{p^{(m+1)(1+2r)}}+\sum_{m=0}^\infty\frac{2m\tau^*(p^{2m})}{p^{(m+1)(1+2r)}} \right)\nonumber
 \\ \ = \ &-\frac{p\log p}{p+1}\sum_{m=1}^\infty\frac{2m\tau^*(p^{2m})+(2m-1)\tau^*(p^{2m-1})\tau^*(p)+(2m-2)\tau^*(p^{2m-2})}{p^{m(1+2r)}}.
 \end{align}  Using the fact that $\tau^*(p^{2m-1})\tau^*(p)=\tau^*(p^{2m})-\tau^*(p^{2m-2})$, this expression becomes
  \begin{align}
 f_1'(r;r)=&-\frac{p\log p}{p+1}\sum_{m=1}^\infty\frac{2m\tau^*(p^{2m})+(2m-1)(\tau^*(p^{2m})-\tau^*(p^{2m-2}))+(2m-2)\tau^*(p^{2m-2})}{p^{m(1+2r)}}\nonumber
 \\ \ = \ &-\frac{p\log p}{p+1}\sum_{m=1}^\infty\frac{\tau^*(p^{2m})-\tau^*(p^{2m-2})}{p^{m(1+2r)}}\nonumber
 \\ \ = \ &-\frac{p\log p}{p+1}\sum_{m=1}^\infty\frac{\alpha_p^{2m}+\overline{\alpha}_p^{2m}}{p^{m(1+2r)}}.
 \end{align}
Finally, noting that $-\frac{p}{p+1}=-1+\frac{1}{p+1}$, we have
\be f_1'(r;r)=\frac{\log p}{p+1}\sum_{m=1}^\infty\frac{\alpha_p^{2m}+\overline{\alpha}_p^{2m}}{p^{m(1+2r)}}-\log p\sum_{m=1}^\infty\frac{\alpha_p^{2m}+\overline{\alpha}_p^{2m}}{p^{m(1+2r)}}.
\ee

 We now move on to $f_2$ and $f_2'$.  Plugging in, we have
 \be f_2(r;r)\ = \ 1-\frac{\tau^*(p^2)}{p^{1+2r}}+\frac{\tau^*(p^2)}{p^{2+4r}}-\frac{1}{p^{3+6r}}.
 \ee
 Taking the derivative with respect to $\alpha$ and evaluating at $\alpha=\gamma=r$, we have
 \be f'_2(r;r)\ = \ \log p\left(\frac{2\cdot\tau^*(p^2)}{p^{1+2r}}-\frac{4\cdot\tau^*(p^2)}{p^{2+4r}}+\frac{6}{p^{3+6r}}\right)
 \ee
 Similar calculations give us
 \be f_4(r;r) \ = \ f_2(r,r)
 \ee
 and
 \be f'_4(r;r)\ = \ \log p\left(\frac{\tau^*(p^2)}{p^{1+2r}}-\frac{2\cdot\tau^*(p^2)}{p^{2+4r}}+\frac{3}{p^{3+6r}}\right).
 \ee  It follows that
 \be\frac{f_2'(r;r)}{f_2(r:r)}-\frac{f_4'(r;r)}{f_4(r:r)}\ = \ \log p\frac{\frac{\tau^*(p^2)}{p^{1+2r}}-\frac{2\cdot\tau^*(p^2)}{p^{2+4r}}+\frac{3}{p^{3+6r}}}{1-\frac{\tau^*(p^2)}{p^{1+2r}}+\frac{\tau^*(p^2)}{p^{2+4r}}-\frac{1}{p^{3+6r}}}.
 \ee

 Noting that $f_3(r;r)$ is a constant with respect to $\alpha$, we have $f'_3(r;r)=0$.  Finally, we have
 \be\frac{f_5'(r;r)}{f_5(r;r)}\
  = \ \frac{\log p\cdot\frac{1}{p^{1+2r}}}{1-\frac{1}{p^{1+2r}}}\ = \ \frac{\log p}{p^{1+2r}-1}.
 \ee
 Combining all these expressions, we have

  \begin{align}
   &  \frac{\partial B}{\partial \alpha}(r;r) \ = \ \nonumber
   \\ &\sum_p\log p\left(\frac{1}{p+1}\sum_{m=1}^\infty\frac{\alpha_p^{2m}+\overline{\alpha}_p^{2m}}{p^{m(1+2r)}}-\sum_{m=1}^\infty\frac{\alpha_p^{2m}+\overline{\alpha}_p^{2m}}{p^{m(1+2r)}}\right.\nonumber
   \\& \left.\,\,\,\,\,\,\,\,\,\,\,\,+\frac{\frac{\tau^*(p^2)}{p^{1+2r}}\frac{2\cdot\tau^*(p^2)}{p^{2+4r}}\left.\right.+\frac{3}{p^{3+6r}}}{1-\frac{\tau^*(p^2)}{p^{1+2r}}+\frac{\tau^*(p^2)}{p^{2+4r}}-\frac{1}{p^{3+6r}}}+\frac{1}{1-p^{1+2r}}\right)
  \end{align}
  
   as claimed.
 \end{proof}
 Evaluating this derivative at $r=iy$, we have
 \begin{align}
  B'(iy;iy) \ = \ &\sum_p\log p\left(\frac{1}{p+1}\sum_{m=1}^\infty\frac{\alpha_p^{2m}+\overline{\alpha}_p^{2m}}{p^{m(1+2iy)}}-\sum_{m=1}^\infty\frac{\alpha_p^{2m}+\overline{\alpha}_p^{2m}}{p^{m(1+2iy)}}\right.\nonumber
\\&\left.+\frac{\frac{\tau^*(p^2)}{p^{1+2iy}}-\frac{2\cdot\tau^*(p^2)}{p^{2+4iy}}+\frac{3}{p^{3+6iy}}}{1-\frac{\tau^*(p^2)}{p^{1+2iy}}+\frac{\tau^*(p^2)}{p^{2+4iy}}-\frac{1}{p^{3+6iy}}}+\frac{1}{1-p^{1+2iy}}\right)\label{bevaluated}.
 \end{align}
Equation \eqref{bevaluated} contains a term present in the number theory calculations, as well as an algebraically messy term after it.  The following vital lemma eliminates the extra term, greatly simplifying our expression and giving us perfect equality between $B'(iy;iy)$ and equation \ref{equaltob}.  Without it, our correspondence of terms between \ref{ntpred} and \ref{rcpred} would not work out as desired.

 \begin{lemma} We have
 \be-\sum_{m=1}^\infty\frac{\alpha_p^{2m}+\overline{\alpha}_p^{2m}}{p^{m(1+2iy)}}+
 \frac{\frac{\tau^*(p^2)}{p^{1+2iy}}-\frac{2\cdot\tau^*(p^2)}{p^{2+4iy}}+\frac{3}{p^{3+6iy}}}{1-
 \frac{\tau^*(p^2)}{p^{1+2iy}}+\frac{\tau^*(p^2)}{p^{2+4iy}}-\frac{1}{p^{3+6iy}}}+\frac{1}{1-p^{1+2iy}} \ = \ 0.
 \ee
 \end{lemma}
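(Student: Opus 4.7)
The plan is to recognize the middle rational expression as (the negative of $u$ times) a logarithmic derivative, and then expand everything as geometric series so that the cancellation becomes transparent. Throughout I would use the shorthand $u = p^{-(1+2iy)}$ and the fact (from the Satake parameters $\alpha_p, \overline{\alpha}_p$) that $\alpha_p \overline{\alpha}_p = 1$ and $\tau^*(p) = \alpha_p + \overline{\alpha}_p$, which together give $\tau^*(p^2) = \tau^*(p)^2 - 1 = \alpha_p^2 + \overline{\alpha}_p^2 + 1$.

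The main structural observation, and the only nonobvious step, is that the denominator factors as
\begin{equation}
1 - \tau^*(p^2)\,u + \tau^*(p^2)\,u^2 - u^3 \ = \ (1 - \alpha_p^2 u)(1 - \overline{\alpha}_p^2 u)(1 - u).
\end{equation}
This can be checked by direct multiplication using $\alpha_p^2 \overline{\alpha}_p^2 = 1$, and it is exactly the Euler factor for $\zeta(s) L(s,\mathrm{sym}^2 \Delta)$ at $s = 1 + 2iy$. With this in hand, one notices that the numerator of the middle term equals $-u \cdot \frac{d}{du}$ of the denominator, so if $f(u)$ denotes the factored expression above then
\begin{equation}
\frac{\tfrac{\tau^*(p^2)}{p^{1+2iy}} - \tfrac{2\tau^*(p^2)}{p^{2+4iy}} + \tfrac{3}{p^{3+6iy}}}{1 - \tfrac{\tau^*(p^2)}{p^{1+2iy}} + \tfrac{\tau^*(p^2)}{p^{2+4iy}} - \tfrac{1}{p^{3+6iy}}} \ = \ -u\,\frac{f'(u)}{f(u)} \ = \ \frac{\alpha_p^2 u}{1-\alpha_p^2 u} + \frac{\overline{\alpha}_p^2 u}{1-\overline{\alpha}_p^2 u} + \frac{u}{1-u}.
\end{equation}

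I would then expand each of the three pieces on the right as a geometric series (valid since $|\alpha_p^2 u| = |\overline{\alpha}_p^2 u| = |u| = p^{-1} < 1$), producing
\begin{equation}
\sum_{m=1}^\infty \frac{\alpha_p^{2m} + \overline{\alpha}_p^{2m}}{p^{m(1+2iy)}} \ + \ \sum_{m=1}^\infty \frac{1}{p^{m(1+2iy)}}.
\end{equation}
For the remaining term, a one-line manipulation gives $\frac{1}{1 - p^{1+2iy}} = -\frac{u}{1-u} = -\sum_{m=1}^\infty p^{-m(1+2iy)}$. Summing the three contributions, the $\alpha_p^{2m} + \overline{\alpha}_p^{2m}$ pieces cancel against the first sum in the lemma, and the geometric series $\sum_m u^m$ cancels the last term, yielding $0$.

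The only real obstacle is spotting the factorization; once that is recognized the rest is bookkeeping. A mild alternative would be to avoid computing $\tau^*(p^2)$ in closed form and instead just verify the factorization by expanding $(1-\alpha_p^2 u)(1-\overline{\alpha}_p^2 u)(1-u)$ and using Newton's identity $\alpha_p^2 + \overline{\alpha}_p^2 = \tau^*(p)^2 - 2 = \tau^*(p^2) - 1$, but the net calculation is the same.
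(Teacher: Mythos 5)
Your proof is correct, and it verifies every identity you need (the factorization, the $-u\,f'(u)/f(u)$ identification, and the geometric-series expansions all check out). It is, however, organized rather differently from the paper's argument. The paper works in the opposite direction: it first sums the series $\sum_m (\alpha_p^{2m}+\overline{\alpha}_p^{2m})u^m$ into a single rational function $\frac{(\tau^*(p^2)-1)u-2u^2}{1-(\tau^*(p^2)-1)u+u^2}$, then puts all three terms of $M$ over the common denominator $1-\tau^*(p^2)u+\tau^*(p^2)u^2-u^3$ using the partial factorization $1-tq+tq^2-q^3=(1-(t-1)q+q^2)(1-q)$, and finally checks by brute force that the coefficients of $q$, $q^2$, $q^3$ in the resulting numerator all vanish. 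You instead push the cubic all the way down to its linear factors $(1-\alpha_p^2u)(1-\overline{\alpha}_p^2u)(1-u)$, recognize the messy middle term as $-u$ times the logarithmic derivative of the Euler factor of $\zeta(s)L(s,\mathrm{sym}^2\Delta)$ at $s=1+2iy$, and let the partial-fraction decomposition of the logarithmic derivative produce exactly the two series that the other two terms of $M$ cancel. The two computations are algebraically equivalent (your quadratic factor $(1-\alpha_p^2u)(1-\overline{\alpha}_p^2u)$ is the paper's $1-(t-1)q+q^2$), but your version buys a conceptual explanation of \emph{why} the identity holds rather than just a verification that a degree-three polynomial in $q$ is identically zero; it also makes transparent which piece of $B_\Delta'$ is the symmetric-square contribution and which is the zeta contribution. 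The paper's version has the minor advantage of never needing $|\alpha_p|=1$ for convergence of three separate series at once, but since $|\alpha_p^2 u|=|u|=1/p<1$ in all cases of interest, this is not a real issue.
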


 \begin{proof}
 Set
 \be M\ = \ -\sum_{m=1}^\infty\frac{\alpha_p^{2m}+\overline{\alpha}_p^{2m}}{p^{m(1+2iy)}}+\frac{\frac{\tau^*(p^2)}{p^{1+2iy}}-\frac{2\cdot\tau^*(p^2)}{p^{2+4iy}}+\frac{3}{p^{3+6iy}}}{1-\frac{\tau^*(p^2)}{p^{1+2iy}}+\frac{\tau^*(p^2)}{p^{2+4iy}}-\frac{1}{p^{3+6iy}}}+\frac{1}{1-p^{1+2iy}}.
 \ee
 We rewrite the series that is the first term of  $M$.  Setting $A_1=\frac{\alpha_p^2}{p^{m(1+2iy)}}$ and $A_2=\frac{\overline{\alpha}_p^2}{p^{m(1+2iy)}}$ and noting that both have absolute value less than $1$, we have
 \begin{align}\sum_{m=1}^\infty\frac{\alpha_p^{2m}+\overline{\alpha}_p^{2m}}{p^{m(1+2iy)}} \ =\ &\sum_{m=1}^\infty\frac{\alpha_p^{2m}}{p^{m(1+2iy)}}+\sum_{m=1}^\infty\frac{\overline{\alpha}_p^{2m}}{p^{m(1+2iy)}}\nonumber
 \\ \ = \ &\sum_{m=1}^\infty A_1^m+\sum_{m=1}^\infty A_2^m\nonumber
 \\ \ = \ &\frac{A_1}{1-A_1}+\frac{A_1}{1-A_2}\nonumber
 \\ \ = \ &\frac{A_1+A_2-2A_1A_2}{1-A_1-A_2+A_1A_2}\nonumber
 \\ \ = \ &\frac{\frac{\alpha_p^2+\overline{\alpha}_p^2}{p^{1+2iy}}-\frac{2}{p^{2+4iy}}}{1-\frac{\alpha_p^2+\overline{\alpha}_p^2}{p^{1+2iy}}+\frac{1}{p^{2+4iy}}}.
 \end{align}  Using the fact that $\alpha_p^2+\overline{\alpha}_p^2=\tau^*(p^2)-1$, we have\begin{align}
 \sum_{m=1}^\infty\frac{\alpha_p^{2m}+\overline{\alpha}_p^{2m}}{p^{m(1+2iy)}} =\frac{\frac{\tau^*(p^2)-1}{p^{1+2iy}}-\frac{2}{p^{2+4iy}}}{1-\frac{\tau^*(p^2)-1}{p^{1+2iy}}+\frac{1}{p^{2+4iy}}}.
 \end{align}
Note that
 \begin{align}
 \frac{1}{1-p^{1+2iy}}=&-\frac{1/p^{1+2iy}}{1-(1/p^{1+2iy})}
 \end{align}  Letting $t$ denote $\tau^*(p)$ and $q$ denote $1/p^{1+2iy}$, we have

 \begin{align}M\ = \ &-\sum_{m=1}^\infty\frac{\alpha_p^{2m}+\overline{\alpha}_p^{2m}}{p^{m(1+2iy)}}+ \frac{\frac{\tau^*(p^2)}{p^{1+2iy}}-\frac{2\cdot\tau^*(p^2)}{p^{2+4iy}}+ \frac{3}{p^{3+6iy}}}{1-\frac{\tau^*(p^2)}{p^{1+2iy}}+\frac{\tau^*(p^2)}{p^{2+4iy}}- \frac{1}{p^{3+6iy}}}+\frac{1}{1-p^{1+2iy}}\nonumber
 \\ \ = \ &-\frac{\frac{\tau^*(p^2)-1}{p^{1+2iy}}-\frac{2}{p^{2+4iy}}}{1-\frac{\tau^*(p^2)-1}{p^{1+2iy}}+\frac{1}{p^{2+4iy}}}+\frac{\frac{\tau^*(p^2)}{p^{1+2iy}}-\frac{2\cdot\tau^*(p^2)}{p^{2+4iy}}+\frac{3}{p^{3+6iy}}}{1-\frac{\tau^*(p^2)}{p^{1+2iy}}+\frac{\tau^*(p^2)}{p^{2+4iy}}-\frac{1}{p^{3+6iy}}}-\frac{1/p^{1+2iy}}{1-(1/p^{1+2iy})}\nonumber
 \\ \ = \ &\frac{(1-t)q+2q^2}{1-(t-1)q+q^2}+\frac{tq-2tq^2+3q^3}{1-tq+tq^2-q^3}-\frac{q}{1-q}.
 \end{align}  Noting that $1-tq+tq^2-q^3=(1-(t-1)q+q^2)(1-q)$, we have
 \begin{align}
M=& \frac{(1-q)((1-t)q+2q^2)+tq-2tq^2+3q^3-(1-(t-1)q+q^2)q}{1-tq+tq^2-q^3}\nonumber
\\ \ = \ &\frac{q-qt+2q^2-q^2+tq^2-2q^3+tq-2tq^2+3q^3-q+tq^2-q^2-q^3}{1-tq+tq^2-q^3}\nonumber
\\ \ = \ &\frac{(1-t+t-1)q+(2-1+t-2t+t-1)q^2+(-2+3-1)q^3}{1-tq+tq^2-q^3}\nonumber
\\ \ = \ & 0.
 \end{align}
 \end{proof}
  Combined with equation \eqref{bevaluated}, this lemma immediately implies
 \begin{corollary}\label{prop}
 \be B'(iy;iy) \ = \ \sum_p\frac{\log p}{p+1}\sum_{m=1}^\infty\frac{\alpha_p^{2m}+\overline{\alpha}_p^{2m}}{p^{m(1+2iy)}}.\label{boosh}
 \ee
 \end{corollary}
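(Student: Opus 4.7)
The plan is essentially to observe that the corollary is an immediate consequence of substituting the previous lemma into the expression for $B'(iy;iy)$ derived in equation \eqref{bevaluated}. Specifically, I would write out \eqref{bevaluated} and identify the terms appearing inside the $\sum_p \log p(\cdots)$: there are four summands, namely
\[
\frac{1}{p+1}\sum_{m=1}^\infty\frac{\alpha_p^{2m}+\overline{\alpha}_p^{2m}}{p^{m(1+2iy)}}, \quad -\sum_{m=1}^\infty\frac{\alpha_p^{2m}+\overline{\alpha}_p^{2m}}{p^{m(1+2iy)}},
\]
\[
\frac{\frac{\tau^*(p^2)}{p^{1+2iy}}-\frac{2\tau^*(p^2)}{p^{2+4iy}}+\frac{3}{p^{3+6iy}}}{1-\frac{\tau^*(p^2)}{p^{1+2iy}}+\frac{\tau^*(p^2)}{p^{2+4iy}}-\frac{1}{p^{3+6iy}}}, \quad \frac{1}{1-p^{1+2iy}}.
\]
The preceding lemma states precisely that the sum of the last three of these is zero (for each prime $p$). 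The only content of the corollary is therefore to recognize and apply this cancellation.

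Concretely, I would split the expression in \eqref{bevaluated} as
\[
B'(iy;iy) \;=\; \sum_p \log p \cdot \frac{1}{p+1}\sum_{m=1}^\infty\frac{\alpha_p^{2m}+\overline{\alpha}_p^{2m}}{p^{m(1+2iy)}} \;+\; \sum_p \log p \cdot M_p,
\]
where $M_p$ denotes the combination of the three remaining terms. The lemma gives $M_p = 0$ for every prime $p$, so the second sum vanishes termwise, leaving only the first sum, which after pulling the $1/(p+1)$ inside is exactly the right-hand side of \eqref{boosh}.

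There is really no obstacle here beyond bookkeeping, since the lemma has already done all the algebraic work (telescoping the tau-function expressions using $\alpha_p^2+\overline{\alpha}_p^2=\tau^*(p^2)-1$ and the identity $1-tq+tq^2-q^3=(1-(t-1)q+q^2)(1-q)$). The only thing worth flagging is that one should verify the sum over $p$ converges so that the termwise cancellation is legitimate; this follows from Deligne's bound $|\alpha_p|=|\overline{\alpha}_p|=1$, which makes each of the four terms $O((\log p)/p)$ or better in modulus on the line $\mathrm{Re}(s)=1$ (with $y$ real), so that the rearrangement of the summands within each $p$ is unconditionally valid. Thus the corollary follows in one line from the lemma.
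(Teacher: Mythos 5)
Your proposal is correct and is exactly how the paper derives the corollary: equation \eqref{bevaluated} combined with the preceding lemma, which kills the last three summands for each $p$ and leaves only the $\frac{1}{p+1}$ term. The added remark on convergence is a harmless (and reasonable) extra precaution, but the argument is the same one-line deduction the paper makes.
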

 This gives us our second correspondence of terms.

 \subsection{The Error Term}\label{errorsection}

 From Proposition \ref{prop}, we have that equations \eqref{ntpred} and \eqref{rcpred} are in agreement save for the constant $\frac{g(0)}{2}$ and for the term
 \begin{align}
R(g;X)=-\frac{1}{LX^*}\int_{-\infty}^\infty g(\nu)&\sum_{d\leq X}     \left(\frac{d}{2\pi}\right)^{-2i\frac{\pi\nu}{L}}    \frac{\Gamma\left(6-i\frac{\pi\nu}{L}\right)}{\Gamma\left(6+i\frac{\pi\nu}{L}\right)}\nonumber
\\\times &\frac{\zeta\left(1+2i\frac{\pi\nu}{L}\right)L_\Delta\left({\rm sym^2},1-2i\frac{\pi\nu}{L}\right)}{L_\Delta({\rm sym^2},1)}B_\Delta\left(-i\frac{\pi\nu}{L},i\frac{\pi\nu}{L}\right)d\nu\label{betterbesmall}.
\end{align}  Our general technique will be to perform a contour shift and bound all the terms from $\Gamma$ onward in the expression above by a polynomial in $\nu$, then use the rapid decay of $g(\nu)$ to show the integral over $\nu$ converges so that we need only worry about $X$ terms (as well as a constant contribution from a pole that is balanced by the equal constant $g(0)/2$ in the explicit formula).

 First we will show that $ B_{\Delta}(-iy;iy)$ converges and will continue to do so for contour shifts of $y$ up to a cut-off point.

 \begin{proposition}\label{bconverges}  Let $-\frac{1}{2}+\gep' \le w \le \frac{1}{4} - \gep'$.  If we shift from $y$ to $y-iw$, then we have that $B_{\Delta}\left(-i(y-iw);i(y-iw)\right)$ is $O_w(1)$.
 \end{proposition}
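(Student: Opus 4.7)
The plan is to show that the Euler product defining $B_\Delta(-i(y-iw);i(y-iw))$ converges absolutely, uniformly in $y \in \RR$, for each $w$ in the given range, with a bound depending only on $w$ and $\gep'$. The standard strategy is to show that each local Euler factor is $1 + O_w(p^{-1-\delta})$ for some $\delta = \delta(\gep') > 0$, after which $\log B_\Delta = \sum_p O_w(p^{-1-\delta}) = O_w(1)$ gives the result.

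First I would substitute and record the relevant real parts. With $\alpha = -i(y-iw) = -w - iy$ and $\gamma = i(y-iw) = w + iy$, we have $\alpha+\gamma = 0$, $\RRe(\alpha)=-w$, $\RRe(\gamma)=w$. In particular $1+\alpha+\gamma = 1$ is $y$-independent, and $\RRe(1+2\alpha) = 1-2w \geq \tfrac12+2\gep'$, $\RRe(1+2\gamma)=1+2w\geq 2\gep'$. Then I would invoke Deligne's bound $|\tau^*(p^k)|\leq k+1$ to verify that each of the three infinite sums appearing inside $f_1(\alpha;\gamma)$ converges absolutely and uniformly in $y$, with leading order of size $1$, $p^{-(1-2w)}$, or $p^{-(1+2w)}$ as $p\to\infty$.

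The crux of the argument is a cancellation calculation. Writing the local factor as $f_1 \cdot f_2 f_3/(f_4 f_5)$ and Taylor-expanding each $f_i$ in powers of $p^{-(1-2w)}$, $p^{-(1+2w)}$, and $p^{-1}$, I would collect all contributions up to size $p^{-1}$. I expect:
\begin{itemize}
\item the $p^{-(1-2w)}$ terms appearing in $f_1-1$ (with coefficient $\tau^*(p^2)$) cancel against those from $f_2$;
\item the $p^{-(1+2w)}$ terms in $f_1-1$ cancel against those from $f_3$;
\item the remaining terms of size $p^{-1}$ combine to $\frac{\tau^*(p^2) - \tau^*(p)^2 + 1}{p}$, which \emph{vanishes identically} by the Hecke relation $\tau^*(p^2) = \tau^*(p)^2 - 1$ (equivalently $\alpha_p\overline{\alpha}_p = 1$ and $\alpha_p+\overline{\alpha}_p = \tau^*(p)$).
\end{itemize}
This mirrors the cancellation phenomenon established in the lemma just before the proposition, where the same Hecke identity kills an analogous extraneous term.

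After these cancellations, the lowest-order surviving contributions at each prime are bounded by $p^{-\min(2(1-2w),\,2(1+2w),\,1+\min(1-2w,1+2w))}$, and the constraints $w \leq \tfrac14-\gep'$ and $w \geq -\tfrac12+\gep'$ are precisely what is needed to ensure each surviving exponent exceeds $1$ by some $\delta(\gep')>0$. The tail of the Euler factor (from $m\geq 2$ terms and from the higher-order terms in the Taylor expansion) is bounded similarly using Deligne, so each local factor is $1 + O_w(p^{-1-\delta})$ uniformly in $y$. This establishes the uniform bound $B_\Delta(-i(y-iw);i(y-iw)) = O_w(1)$.

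The main obstacle is executing the cancellation cleanly: one must track enough terms of the Taylor expansion of all five $f_i$'s to confirm that every potentially divergent contribution (of size $p^{-(1-2w)}$, $p^{-(1+2w)}$, or $p^{-1}$) really does cancel, rather than merely being small. The endpoints of the allowed interval for $w$ are determined precisely by when the worst surviving exponent fails to exceed $1$, so the bookkeeping is tight and the Hecke relation must be used at exactly the right point.
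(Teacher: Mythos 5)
Your strategy is the same as the paper's: shift to $y'=y-iw$ so that $\alpha+\gamma=0$, truncate the three sums in $f_1$ via Deligne's bound (the paper's Lemma \ref{sumlemma} does exactly this, with errors $O(p^{-(2-4w-\epsilon)})$, $O(p^{-(2-2w-\epsilon)})$, $O(p^{-(2-\epsilon)})$), expand $1/(f_4f_5)$ as geometric series, and watch the three first-order contributions cancel --- the $p^{-(1+2\alpha)}$ piece against $f_2$, the $p^{-(1+2\gamma)}$ piece against $f_3$, and the $p^{-1}$ piece via $\tau^*(p)^2=\tau^*(p^2)+1$. That is precisely the paper's computation of $\Theta(p)$.

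There is, however, one point where your own bookkeeping exposes a genuine issue, in your list of surviving exponents. You correctly include $2(1+2w)$, which is the exponent of the cross term coming from multiplying the leading $p^{-(1+2\gamma)}$ term of $f_1$ (namely $\frac{p}{p+1}\,p^{-(1+2iy')}$) by the $-p^{-(1+2iy')}$ term of $f_3$; this term does not cancel against anything, since nothing else in the local factor carries the phase $p^{-4iy'}$. But $2(1+2w)>1$ forces $w>-\tfrac14$, so your assertion that the hypotheses $-\tfrac12+\gep'\le w\le\tfrac14-\gep'$ ``are precisely what is needed'' is false on the subinterval $-\tfrac12+\gep'\le w<-\tfrac14$: there the Euler product picks up a factor behaving like $\prod_p(1-p^{-(2+4w+4iy)})$, whose exponent has real part below $1$, and absolute convergence fails. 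The paper's proof has the same blind spot --- its final display records only errors $O(p^{-(2-4w-\epsilon)})$ and $O(p^{-(2+2w-\epsilon)})$ and silently drops this $O(p^{-(2+4w)})$ cross term --- so the proposition as stated is only actually established for $w>-\tfrac14$ by either argument. This is harmless for the main theorem, since Section \ref{errorsection} only ever applies the proposition with $w=\tfrac14-\epsilon>0$, where every surviving exponent comfortably exceeds $1$; but if you want the full stated range you must either restrict to $w\ge-\tfrac14+\gep'$ or handle the $\zeta(2+4w+4iy)^{-1}$-type factor by analytic continuation rather than by absolute convergence of the Euler product.
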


 \begin{proof}
 We have
 \begin{align}
 B_{\Delta}(-iy;iy)=\prod_p&\left(1+\frac{p}{p+1}\left(\sum_{m=1}^\infty\frac{\tau^*(p^{2m})}{p^{m(1-2iy)}}-\frac{\tau^*(p)}{p}\sum_{m=0}^\infty\frac{\tau^*(p^{2m+1})}{p^{m(1-2iy)}}+\frac{1}{p^{1+2iy}}\sum_{m=0}^\infty\frac{\tau^*(p^{2m})}{p^{m(1-2iy)}} \right) \right)\nonumber
\\\times &\frac{\left(1-\frac{\tau^*(p^2)}{p^{1-2iy}}+\frac{\tau^*(p^2)}{p^{2-4iy}}-\frac{1}{p^{3-6iy}}\right)\left(1-\frac{1}{p^{1+2iy}}\right)}{\left(1-\frac{\tau^*(p^2)}{p}+\frac{\tau^*(p^2)}{p^2}-\frac{1}{p^{3}}\right)\left(1-\frac{1}{p}\right)}.
 \end{align}

 Letting $y'=y-iw$ (our shift), we will show that
\begin{align}\Theta(p)\ = \ &\left(1+\frac{p}{p+1}\left(\sum_{m=1}^\infty\frac{\tau^*(p^{2m})}{p^{m(1-2iy')}} - \frac{\tau^*(p)}{p}\sum_{m=0}^\infty\frac{\tau^*(p^{2m+1})}{p^{m(1-2iy')}} + \frac{1}{p^{1+2iy'}}\sum_{m=0}^\infty\frac{\tau^*(p^{2m})}{p^{m(1-2iy')}} \right) \right)\nonumber
\\ &\times \frac{\left(1-\frac{\tau^*(p^2)}{p^{1-2iy'}}+\frac{\tau^*(p^2)}{p^{2-4iy'}} - \frac{1}{p^{3-6iy'}}\right)\left(1-\frac{1}{p^{1+2iy'}}\right)}{\left(1-\frac{\tau^*(p^2)}{p}+\frac{\tau^*(p^2)}{p^2} - \frac{1}{p^{3}}\right)\left(1-\frac{1}{p}\right)}\nonumber\\ \ = \ & 1+O\left(\frac{1}{p^{2-4w-\epsilon}}\right)+O\left(\frac{1}{p^{2+2w-\epsilon}}\right),
\end{align} implying that the product converges as the error is $O(1/p^{1+\gep''})$. Noting that $\left|\frac{\tau^*(p^2)}{p}-\frac{\tau^*(p^2)}{p^2}+\frac{1}{p^{3}}\right|<1$ and $\left|\frac{1}{p}\right|<1$, we may rewrite  $\Theta(p)$ as
\begin{align}\Theta(p)=&\left(1+\frac{p}{p+1}\left(\sum_{m=1}^\infty\frac{\tau^*(p^{2m})}{p^{m(1-2iy')}}-\frac{\tau^*(p)}{p}\sum_{m=0}^\infty\frac{\tau^*(p^{2m+1})}{p^{m(1-2iy')}}+\frac{1}{p^{1+2iy'}}\sum_{m=0}^\infty\frac{\tau^*(p^{2m})}{p^{m(1-2iy')}} \right) \right)\nonumber
\\ &\times \ \left(1-\frac{\tau^*(p^2)}{p^{1-2iy'}}+\frac{\tau^*(p^2)}{p^{2-4iy'}} - \frac{1}{p^{3-6iy'}}\right)\left(1-\frac{1}{p^{1+2iy'}}\right)\nonumber\\ & \times \  \left(\sum_{n=0}^\infty\left(\frac{\tau^*(p^2)}{p} - \frac{\tau^*(p^2)}{p^2}+\frac{1}{p^{3}}\right)^n\right) \left(\sum_{n=0}^\infty\frac{1}{p^n}\right)
.\label{geomrewrite}
\end{align}

We now rewrite some of the infinite sums of \eqref{geomrewrite} as a main term plus an error term.  By Lemma \ref{sumlemma} in the appendix, for any $\epsilon>0$ we may truncate the terms of \eqref{geomrewrite} as follows, preserving multiple error terms depending on the direction in which $y$ has been shifted:
\begin{align}
\Theta{(p)}=&\left(1+\frac{p}{p+1}\left(\frac{\tau^*(p^2)}{p^{1-2iy'}}+\frac{\tau^*(p)^2}{p^{2}}+\frac{1}{p^{1+2iy'}}+
O\left(\frac1{p^{2-4w-\epsilon}}\right)+O\left(\frac{1}{p^{2-\epsilon}}\right) \right) \right)\nonumber
\\ &\times \left(1-\frac{\tau^*(p^2)}{p^{1-2iy'}}+O(\frac1{p^{2-4w}}) \right)\left(1-\frac{1}{p^{1+2iy'}}\right)
\nonumber\\ & \times \ \left(1+\frac{\tau^*(p^2)}{p} + O\left(\frac1{p^2}\right)\right)\left(1+\frac{1}{p}+O\left(\frac1{p^2}\right)\right)     \nonumber
\\ \ = \ &\left(1+\left(\frac{\tau^*(p^2)}{p^{1-2iy'}} - \frac{\tau^*(p^2)+1}{p} + \frac{1}{p^{1+2iy'}}\right)-\frac{1}{p+1}\left(\frac{\tau^*(p^2)}{p^{1-2iy'}} - \frac{\tau^*(p^2)+1}{p} + \frac{1}{p^{1+2iy'}}\right)\right.   \nonumber
\\ &\left. + O\left(\frac{1}{p^{2-4w-\epsilon}}\right)+ O\left(\frac{1}{p^{2-\epsilon}}\right)\right)\nonumber
\\ &\times\left(1-\frac{\tau^*(p^2)}{p^{1-2iy'}}- \frac{1}{p^{1+2iy'}}+\frac{\tau^*(p^2)+1}{p} + O\left(\frac1{p^{2-4w}}\right)\right)\nonumber
\\ \ = \ &\left(1+\frac{\tau^*(p^2)}{p^{1-2iy'}}-\frac{\tau^*(p^2)+1}{p} + \frac{1}{p^{1+2iy'}}+O\left(\frac{1}{p^{2-4w-\epsilon}}\right)+O\left(\frac{1}{p^{2+2w-\epsilon}}\right) + O\left(
\frac{1}{p^{2-\epsilon}}\right)\right)\nonumber
\\ & \times \left(1-\frac{\tau^*(p^2)}{p^{1-2iy'}} - \frac{1}{p^{1+2iy'}} + \frac{\tau^*(p^2)+1}{p} + O\left(\frac1{p^{2-4w}}\right)\right)\nonumber
\\ \ = \ &1-\frac{\tau^*(p^2)}{p^{1-2iy'}}-\frac{1}{p^{1+2iy'}} + \frac{\tau^*(p^2)+1}{p}+\frac{\tau^*(p^2)}{p^{1-2iy'}}- \frac{\tau^*(p^2)+1}{p}+\frac{1}{p^{1+2iy'}}\nonumber
\\& + O\left(\frac{1}{p^{2-4w-\epsilon}}\right)+O\left(\frac{1}{p^{2+2w-\epsilon}}\right)\nonumber
\\ \ = \ &1+O\left(\frac{1}{p^{2-4w-\epsilon}}\right)+O\left(\frac{1}{p^{2+2w-\epsilon}}\right).
\end{align}
If we have shifted in the positive direction, the larger error term is $O\left({1}/{p^{2-4w-\epsilon}}\right)$, meaning we may shift $2\epsilon$ close to $1/4$ and have $O(1/p^{1+\delta})$ where $\delta>0$.  Similarly, if we have shifted in the negative direction, the larger error term is $O\left({1}/{p^{2+2w-\epsilon}}\right)$, meaning we may shift $2\epsilon$ close to $-1/2$ and have $O(p^{1+\delta})$ where $\delta>0$.  In both cases we have convergence with the product $O_w(1)$.
 \end{proof}

 Thus the $B_\Delta$ term is $O_w(1)$ for fixed $w$ and any $y$ (as the bound is independent of the imaginary part of our variable). To bound $\zeta$ and $L_\Delta$, we use the standard fact (see for example \cite{IK}) that both grow polynomially in $y$ as $|y|\rightarrow\infty$ (where $y$ is our imaginary part).

We now consider $\frac{\Gamma(6-iy)}{\Gamma(6+iy)}$. Our shifting restrictions from $B_\Delta$ allow us to only consider $w$ with $-1/2 < w < 1/4$, and thus we will not shift far enough to reach a pole or zero for either the numerator or the denominator. If we shift from $y$ to $y-iw$, this term becomes
\be
\frac{\Gamma(6-i(y-iw))}{\Gamma(6+i(y-iw))} \ = \ \frac{\Gamma(6-w-iy)}{\Gamma(6+w+iy)}.
\ee
From the definition of the Beta function we know
\be
\frac{\Gamma(x)\Gamma(z)}{\Gamma(x+z)} \ = \ \int_0^1t^{x-1}(1-t)^{z-1}dt.
\ee
Taking $x=6-w-iy$ and $z=2(w+iy)$, we have
\be
\frac{\Gamma(6-w-iy)\Gamma(2(w+iy))}{\Gamma(6+w+iy)} \ = \ \int_0^1t^{5-w-iy}(1-t)^{2(w+iy)-1}dt \ = \ O_{w,y}(1).
\ee
Thus
\be\frac{\Gamma(6-w-iy)}{\Gamma(6+w+iy)} \ = \ \frac{O_{v,w}(1)}{\Gamma(-2(w+iy))},
\ee giving us a polynomial bound on $\frac{\Gamma(6-w-iy)}{\Gamma(6+w+iy)}$ in $y$ due to the properties of the $\Gamma$ function.

The last detail to attend to before attacking \eqref{betterbesmall} is to bound $g\left(\nu-i\frac{wL}{\pi}\right)$.
\begin{proposition}  For any $w$, we have $g\left(\nu-iw\frac{\log(X/2\pi)}{\pi}\right)\ll X^{2\sigma |w|}\left(\nu^2+(w\frac{\log(X/2\pi)}{2\pi})^2\right)^{-B}$ for any $B\geq 0$.\label{boundedsch}
\end{proposition}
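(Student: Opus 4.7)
The plan is to prove this bound via a Paley-Wiener style argument: since $\widehat{g}$ is smooth and compactly supported, $g$ extends to an entire function whose values at complex arguments can be controlled via the Fourier inversion formula and repeated integration by parts.

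First I would unpack the Fourier transform of $g$. Because $g(\nu) = \phi(\nu\pi/L)$ is a dilate of $\phi$, a standard change of variables yields
\[ \widehat{g}(\omega) \ = \ \frac{L}{\pi}\,\widehat{\phi}\!\left(\frac{\omega L}{\pi}\right), \]
which inherits the support constraint $\mathrm{supp}(\widehat{g}) \subset (-\sigma\pi/L,\, \sigma\pi/L)$ from $\mathrm{supp}(\widehat{\phi}) \subset (-\sigma,\sigma)$. The Fourier inversion formula then gives the analytic continuation
\[ g(z) \ = \ \int_{-\sigma\pi/L}^{\sigma\pi/L} \widehat{g}(\omega)\, e^{-2\pi i z \omega}\, d\omega, \qquad z \in \mathbb{C}. \]

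Setting $z = \nu - iwL/\pi$ and factoring the exponential as $e^{-2\pi i\nu\omega}\cdot e^{-2wL\omega}$, I would handle each factor separately. On the support of $\widehat{g}$ we have $|2wL\omega| \le 2\pi\sigma|w|$, so $|e^{-2wL\omega}| \le e^{2\pi\sigma|w|}$; since $L = \log(X/2\pi)$, this crude exponential bound is at worst $X^{2\sigma|w|}$, producing the prefactor in the claim. To obtain the polynomial decay factor $(\nu^2 + (wL/(2\pi))^2)^{-B}$, I would integrate by parts in $\omega$ a total of $2B$ times; the boundary terms vanish because $\widehat{g}$ is smooth and vanishes with all derivatives at the endpoints $\pm\sigma\pi/L$, and each pass pulls out a factor of $-2\pi i(\nu - iwL/\pi)$ in the denominator. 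After $2B$ iterations one arrives at
\[ |g(\nu - iwL/\pi)| \ \ll_B \ \frac{\|\widehat{g}^{(2B)}\|_1 \, e^{2\pi\sigma|w|}}{(2\pi)^{2B}\,|\nu - iwL/\pi|^{2B}}, \]
and the inequality $|\nu - iwL/\pi|^{2} = \nu^2 + (wL/\pi)^{2} \ge \nu^2 + (wL/(2\pi))^{2}$ delivers the denominator of the claim.

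The main obstacle, and where I would need to be most careful, is controlling $\|\widehat{g}^{(2B)}\|_1$ in terms of $L$ and $X$. By the chain rule applied to $\widehat{\phi}(\omega L/\pi)$, each $\omega$-derivative of $\widehat{g}$ contributes an extra factor of $L/\pi$, so the integrand can grow like $L^{2B+1}$; but the interval of integration has length $2\sigma\pi/L$, which absorbs one such factor, leaving $\|\widehat{g}^{(2B)}\|_1 \ll L^{2B}$. This polynomial-in-$L$ contribution is harmless because $L = \log(X/2\pi)$, so $L^{2B} = O(X^\varepsilon)$ for any $\varepsilon > 0$ and is easily absorbed into the prefactor $X^{2\sigma|w|}$ whenever $w \neq 0$; the $w = 0$ case reduces to the Schwartz decay of $g$ itself. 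Verifying that the implied constants are uniform in $\nu$ and in $w$ throughout the range $-\tfrac{1}{2}+\varepsilon' \le w \le \tfrac{1}{4}-\varepsilon'$ relevant to the contour shift of Proposition \ref{bconverges} then completes the argument.
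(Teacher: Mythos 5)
Your proof is correct and is essentially the argument the paper gives: Fourier inversion to continue $g$ to complex arguments, $2B$ integrations by parts against the compactly supported $\widehat{g}$ to produce the factor $|\nu-iwL/\pi|^{-2B}$, a bound on the exponential factor coming from the support of $\widehat{g}$, and the observation $\nu^2+(wL/\pi)^2\ge\nu^2+(wL/2\pi)^2$. The one place you diverge is the normalization of the support: you take $\supp(\widehat{g})\subset(-\sigma\pi/L,\sigma\pi/L)$ (which is what $\supp(\hphi)\subset(-\sigma,\sigma)$ literally implies), so your exponential factor is $e^{2\pi\sigma|w|}=O_w(1)$ and the prefactor $X^{2\sigma|w|}$ is vacuous; the paper's proof instead works with $\supp(\widehat{g})\subset(-\sigma,\sigma)$, which is the convention actually used throughout Section \ref{section2} (e.g.\ in the $S_{\rm odd}$ bound) and is what makes the factor $e^{2\pi|y|\sigma}=(X/2\pi)^{2\sigma|w|}$ appear and matter in the final estimate of $R(g;X)$. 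Your extra care in tracking $\|\widehat{g}^{(2B)}\|_1\ll L^{2B}$ is a real point the paper silently suppresses (its implied constant also depends on $\widehat{g}$, hence on $X$ through $L$); in the application these $(\log X)^{O(1)}$ losses are absorbed into $X^{\epsilon}$, so nothing breaks, but you are right that the stated inequality is only clean if one either fixes the support convention as the paper does or allows polylogarithmic factors in the implied constant.
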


\begin{proof}
Since $g(\nu)=\int_{\omega = -\infty}^\infty\hat{g}(\omega)e^{2\pi i\omega\nu}d\omega$, integrating by parts $2n$ times (and noting the boundary terms vanish as $\widehat{g}$ is supported in $(-\sigma, \sigma)$), we find
\begin{align}
g(\nu-iy)& \ =\ \int_{\omega = -\infty}^\infty\hat{g}(\omega)e^{2\pi i\omega(\nu-iy)}d\omega\nonumber
\\& \ = \ \int_{\omega = -\infty}^\infty\hat{g}^{(2n)}(\omega)(2\pi i(\nu-iy))^{-2n}e^{2\pi i (\nu-iy)\omega}d\omega\nonumber
\\& \ \le \ \int_{\omega = -\infty}^\infty\left|\hat{g}^{(2n)}(\omega)\right|  \cdot \left|2\pi i(\nu-iy)\right|^{-2n}\cdot e^{2\pi |y \omega|}d\omega\nonumber
\\ &\ \ll\ e^{2\pi |y|\sigma}|\nu-iy|^{-2n}
\end{align} for any $n\in\mathbb{Z}^+$.  Our claim follows by taking $y=w\frac{\log(X/2\pi)}{\pi}$.
\end{proof}
We are now ready to analyze \eqref{betterbesmall}, which we may rewrite as
 \begin{align}
R(g;X)\ = \ &-\frac{1}{LX^*}\int_{-\infty}^\infty g(\nu)\sum_{d\leq X}     \left(\frac{d}{2\pi}\right)^{-2i\frac{\pi\nu}{L}}    \frac{\Gamma\left(6-i\frac{\pi\nu}{L}\right)}{\Gamma\left(6+i\frac{\pi\nu}{L}\right)}\nonumber
\\&\times \frac{\zeta\left(1+2i\frac{\pi\nu}{L}\right)L_\Delta\left({\rm sym^2},1-2i\frac{\pi\nu}{L}\right)}{L_\Delta({\rm sym^2},1)}B_\Delta\left(-i\frac{\pi\nu}{L},i\frac{\pi\nu}{L}\right)d\nu\nonumber
\\ \ = \ &-\frac{1}{LX^*\delta}\int_{-\infty}^\infty g(\nu)  \sum_{d\leq X}  e^{-2i\frac{\pi\nu}{L}\log{\frac{d}{2\pi}}}    \frac{\Gamma\left(6-i\frac{\pi\nu}{L}\right)}{\Gamma\left(6+i\frac{\pi\nu}{L}\right)}\nonumber
\\&\times{\zeta\left(1+2i\frac{\pi\nu}{L}\right)L_\Delta\left({\rm sym^2},1-2i\frac{\pi\nu}{L}\right)}B_\Delta\left(-i\frac{\pi\nu}{L},i\frac{\pi\nu}{L}\right)d\nu\label{errorrewritten}
\end{align} where $\delta=L_\Delta({\rm sym^2},1)$, a nonzero constant.  By Proposition \ref{bconverges}, we may shift $y$ to $y-iw$ where $-\frac{1}{2}<w<\frac{1}{4}$ and still have $B_\Delta(-iy,iy)$ converge.  Scaling by $L/\pi$, this becomes a shift to $\nu-i\frac{wL}{\pi}$.  Replacing $\nu$ with $\nu-i\frac{wL}{\pi}$ where $w=0$ (we will shift momentarily), equation \eqref{errorrewritten} becomes
\begin{align}
R(g;X)=&-\frac{1}{LX^*\delta}\int_{-\infty}^\infty g\left(\nu-i\frac{wL}{\pi}\right)
    \sum_{d\leq X} e^{-2\pi i\left(\nu-i\frac{wL}{\pi}\right)\frac{\log{({d}/{2\pi})}}{L}}
        \frac{\Gamma\left(6-w-i\frac{\pi\nu}{L}\right)}
        {\Gamma\left(6+w+i\frac{\pi\nu}{L}\right)}\nonumber
\\&     \times           {\zeta\left(1+2w+2i\frac{\pi\nu}{L}\right)
     L_\Delta\left({\rm sym^2},1-2w-2i\frac{\pi\nu}{L}\right)}
B_\Delta\left(-i\frac{\pi\nu}{L}-w,i\frac{\pi\nu}{L}+w\right)d\nu.\label{zeroshift}
\end{align}
Shifting the contour to $w=\frac{1}{4}-\epsilon$, the only residue contribution due to our shift comes from the pole of $\zeta\left(1+2w+2i\frac{\pi\nu}{L}\right)$ at $w=\nu=0$. At $w=\nu=0$ we have that the $d$-sum is simply $X^*$.  The pole of $\zeta(s)$ is simply $1/(s-1)$, and since $s=1+2i\frac{\pi\nu}{L}$ the $\frac{1}{\nu}$ term from the zeta function has coefficient $\frac{L}{2\pi i}$.  Applying the residue theorem after replacing the integral from $-\epsilon$ to $\epsilon$ with a semi-circle oriented clockwise, we lose a factor of $\frac{1}{2\pi i}$, giving us $L$ which is cancelled by the outside coefficient.  The contribution of the pole is therefore $1/2$ everything else evaluated at $\nu=0$, yielding a contribution of $g(0)/2$ which perfectly cancels the constant term in the number theory analysis.
  We also have by Lemma \ref{secondmillerlemma}  that
\be  \sum_{d\leq X} e^{-2\pi i\left(\nu-i\frac{wL}{\pi}\right)\frac{\log{({d}/{2\pi})}}{L}}
\ = \ X^*e^{-2\pi i \left(\nu-i\frac{wL}{\pi}\right)}\left(1-\frac{2\pi i \left(\nu-i\frac{wL}{\pi}\right)}{L}\right)^{-1}+O(X^{2\epsilon}).
\ee

With $w=\frac{1}{4}-\epsilon$, this gives us that the sum is $O(X^*X^{-1/2+\epsilon})$ (possibly modifying our $\epsilon$, but still keeping it arbitrarily small).  As this term is independent of $\nu$, it will not harm the convergence of our integral.  By our previous arguments, we have
\begin{align}
&\frac{\Gamma\left(6-w-i\frac{\pi\nu}{L}\right)}{\Gamma\left(6+w+i\frac{\pi\nu}{L}\right)}{\zeta\left(1+2w+2i\frac{\pi\nu}{L}\right)}\nonumber
\\&\times {L_\Delta\left({\rm sym^2},1-2w-2i\frac{\pi\nu}{L}\right)}B_\Delta\left(-i\frac{\pi\nu}{L}-w,i\frac{\pi\nu}{L}+w\right)\nonumber
\\=&O(\nu^k)
\end{align}
for some positive integer $k$.  Choosing $n=k+1$ in Proposition \ref{boundedsch} now yields \be g\left(\nu-iw\frac{\log(X/2\pi)}{\pi}\right)\ \ll\  X^{2\sigma |w|}\left(\nu^2+\left(w\frac{\log(X/2\pi)}{2\pi}\right)^2\right)^{-(k+1)},\ee allowing us to replace the $g$-term in the integrand in \eqref{zeroshift} with $\nu^{-2k-2}$. This cancels the polynomial size of the other terms (which is $O(v^k)$), giving us convergence of the integral in $\nu$.  As we are only interested in the order of magnitude in terms of $X$ (and not the constant that arises from integrating with respect to $\nu$), it is enough to note that
\bea
R(g;X) & \ = \ &O\left(\frac{1}{X^*}X^{2\sigma |w|}X^*X^{-1/2+\epsilon}\right)\nonumber\\
& \ = \ &O\left(X^{-\frac12+2\sigma |w|+\epsilon}\right)\nonumber\\
& \ = \ &O\left(X^{-(1-\sigma)/2+\epsilon}\right)
\eea as $|w| < 1/4$ (note that $\epsilon$ may change value, but is still arbitrarily small). Thus $R(g,X)$ contributes an error erm $O(X^{-(1-\sigma)/2+\epsilon})$, into which we may absorb the error term of Equation \ref{rcpred}.  Having either matched all other terms or shown they are sufficiently small, we have proven our main result, namely Theorem \ref{thm:mainnumbequalsratios}, which asserted that the Ratios Conjecture's prediction is correct to the stated accuracy. In other words, in this instance of convolving an infinite family with a family of size $1$, we have verified that the Ratios Conjecture's prediction is correct up to a power savings error for suitably restricted test functions (requiring $\sigma<1$).

\subsection{Where To Go From Here}

Ideally, our result can be improved to decrease the error term in the Ratios Conjecture prediction.  The limiting factor in obtaining this bound is the limitations on our contour shift imposed by the $B_\Delta$ term.  If this term were more well understood, it might be possible to extend our shift further to make the error term \ref{betterbesmall} even smaller. Note that in \cite{Mil3}, where the family under consideration is just Dirichlet $L$-functions arising from even fundamental discriminants, the corresponding term can be rewritten from the product expansion in \cite{CS1} and identified as a product of zeta functions. This allows a much better analysis of the error term; to date we have not been able to determine a similar simplification here.

In terms of applying these techniques to other $L$-functions, a natural family is the quadratic twists of an elliptic curve $L$-functions.  The number theory analysis of these families is very similar to the analysis of the twists of the tau $L$-function, with the primary difference in the $S_{\rm even}$ terms due to a special prime (the conductor of the elliptic curve).  With our understanding of the analysis for the twisted tau family, it will easier to narrow in on trouble spots in the analyses of these other families.

\newpage

\section{Random Matrix Theory Models of Convolutions}\label{section4}

We conclude by discussing the possibility of modeling convolutions more generally using Random Matrix Theory.  To do so, we will test whether methods of combining collections of matrices exhibit the same symmetries and relations found in convolving families of $L$-functions.

We begin by defining the symmetry constant for a family $\mathcal{F}$ of $L$-functions.

\begin{definition}  The symmetry constant of a family $\mathcal{F}$, denoted $c_{\mathcal{F}}$, is defined to equal $0$ (respectively $1$ or $-1$) if the one-level density of the family agrees with unitary (respectively symplectic or orthogonal) matrices.
\end{definition}

In \cite{DM2}, Miller and Due\~{n}ez prove that for ``nice'' families $\mathcal{F}$ and $\mathcal{G}$ (what they call NT-good families of unitary automorphic representations of $GL_n(\mathbb{A}_{\mathbb{Q}})$ and $GL_m(\mathbb{A}_{\mathbb{Q}})$ with trivial central character) with symmetry constants $c_\mathcal{F}$ and $c_\mathcal{G}$ such that the Rankin-Selberg convolution $\mathcal{F}\times\mathcal{G}$ is an NT-good family, we have that $\mathcal{F}\times\mathcal{G}$ has symmetry constant
$$c_ {\mathcal{F}\times\mathcal{G}}=c_\mathcal{F}\cdot c_\mathcal{G}.$$
This raises the question of what the corresponding operation is in Random Matrix Theory; that is, what method of combining matrices gives eigenvalue statistics that can be predicted based on the groups of matrices being combined.  A natural candidate for this operation is the tensor product (or Kronecker product) of matrices.  We use Mathematica to compute eigenvalue statistics for tensor products of unitary, unitary orthogonal, and unitary symplectic matrices.

\subsection{Types of Matrices}

The following definitions and facts come from \cite{matrix}.

\begin{definition} An $N\times N$ matrix $X$ with complex entries is said to be \emph{unitary} if $XX^*=I$ (where $X^*$ denotes the conjugate transpose of $X$); we denote the group of all $N\times N$ unitary matrices by $U(N)$.

A unitary matrix $X$ is said to be \emph{orthogonal} if $XX^T=I$, where $X^T$ denotes the transpose of $X$; we denote the group of all $2N\times 2N$ orthogonal matrices by $SO(2N)$ and of all $(2N+1)\times (2N+1)$ orthogonal matrices by $SO(2N+1)$.

A unitary matrix $X$ is said to be \emph{symplectic} if $XZX^T=Z$ (where $Z=\left(\begin{smallmatrix} 0&I_N\\ -I_N&0 \end{smallmatrix}\right)$ with $I_N$ the $N\times N$ identity matrix); we denote the group of $2N\times 2N$ symplectic matrices by $USp(2N)$.
\end{definition}

All eigenvalues of unitary matrices have absolute value $1$, and so must be of the form $e^{i\theta}$ for some $0\leq\theta<2\pi$.  The eigenvalues of $X\in U(N)$ are

$$e^{i\theta_1}, e^{i\theta_2}, \dots, e^{i\theta_N} $$
where
$$0 \leq \theta_1 \leq \theta_2 \leq \dots \leq \theta_N < 2 \pi.$$
These eigenangles are distributed uniformly on $[0,2\pi)$.

For any eigenvalue of an orthogonal or symplectic matrix, its complex conjugate is also an eigenvalue.  For $X\in SO(2N)$, we have eigenvalues
$$e^{\pm i\theta_1}, e^{\pm i\theta_2}, \dots, e^{\pm i\theta_N} $$
where
$$0 \leq \theta_1 \leq \theta_2 \leq \dots \leq \theta_N \leq \pi.$$
These eigenangles are distributed according to the probability distribution
$$\frac{2^{(N-1)^2}}{\pi^NN!}\prod_{1\leq j<k\leq M}(\cos\theta_k-\cos\theta_j)^2. $$

For $X\in SO(2N+1)$, we have eigenvalues
$$1,e^{\pm i\theta_1}, e^{\pm i\theta_2}, \dots, e^{\pm i\theta_N} $$
where
$$0 \leq \theta_1 \leq \theta_2\ \leq \dots \leq \theta_N \leq \pi.$$
These eigenangles are distributed according to the probability distribution
$$\frac{2^{N^2}}{\pi^NN!}\prod_{1\leq j<k\leq M}(\cos\theta_k-\cos\theta_j)^2\prod_{h=1}^N\sin^2\frac{\theta_h}{2}. $$

For $X\in USp(2N)$, we have eigenvalues
$$e^{\pm i\theta_1}, e^{\pm i\theta_2}, \dots, e^{\pm i\theta_N} $$
where
$$0 \leq \theta_1 \leq \theta_2 \leq \dots \leq \theta_N \leq \pi.$$
These eigenangles are distributed according to the probability distribution
$$\frac{2^{(N-1)^2}}{\pi^NN!}\prod_{1\leq j<k\leq M}(\cos\theta_k-\cos\theta_j)^2\prod_{h=1}^N\sin^2\theta_h. $$

These are the groups of matrices on which we will be performing our numerical investigations.  The quantities of interest are the eigenvalues of the tensor products of these matrices.  Since the eigenvalues of a tensor product are simply all possible products of an eigenvalue from the first matrix with an eigenvalue from the second matrix, we have that the eigenangles simply add under tensor product.  It will therefore suffice to generate eigenangles from each grou's eigenangle distribution and add them.  The statistic we focus on is the lowest eigenangle statistic (modulo $2\pi$).

\subsection{Sampling from Distributions}

Sampling from unitary matrices is simple, as the eigenangles are uniformly distributed on $[0,2\pi)$. To randomly sample from the orthogonal and symplectic distributions, we utilize the Accept-Reject method (from the notes of Carston Botts):

  To generate a random variable $X$ that is distributed according to $f_X(x)$, find another density $g(x)$ such that
$$S=\sup_x\left[\frac{f_X(x)}{g(x)}\right]<\infty.$$
Set $M\geq S$, and perform the following algorithm:

\begin{itemize}
\item[1.]  Generate a candidate value of $X$, which we denote by $X^c$, from $g(x)$.
\item[2.] Generate $U$ distributed according to ${\rm Unif}(0,Mg(X^c))$
\item[3.] If $U\leq f_X(X^c)$, accept $X^c$ as a draw from $f_X(x)$.
\end{itemize}

Note that for orthogonal and symplectic matrices, we have $f_X(x)$ is nonzero on $(0,\pi)^N$ and $0$ elsewhere.  We may therefore utilize $g(x)={\rm Unif}((0,\pi)^N)$, allowing us to use $M=S={\pi^N}\sup_x[f_X(x)]$.  Thus we may generate $U$ according to $$\text{Unif}(0,Mg(X^c))=\text{Unif}\left(0,{\pi^N}\sup_x[f_X(x)]\cdot\frac{1}{\pi^N}\right)=\text{Unif}(0,\sup_x[f_X(x)]).$$

Numerically estimating $\sup_x[f_X(x)]$ and increasing it by a safe margin, we will use the above algorithm to generate data for orthogonal and symplectic matrices.

\subsection{Combinations of Orthogonal and Symplectic Matrices}

From the results of Due\~{n}ez and Miller, we expect orthogonal combined with orthogonal to yield symplectic (since $-1\cdot -1=1$), and for symplectic combined with symplectic to yield symplectic.  To test these hypotheses, we have used Mathematica to generate data on the lowest eigenangle statistics of $N^2\times N^2$ orthogonal and symplectic matrices and of tensor products of two $N\times N$ orthogonal matrices, two $N\times N$ symplectic matrices, and one of each. For convenience, we will let ``orth'' (resp. ``symp'') refer to $N^2\times N^2$ orthogonal (resp. symplectic) matrices, and ``orth/orth'', ``symp/symp'', and ``orth/symp'' refer to tensor products of the corresponding $N\times N$ matrices.

There was little similarity found in comparing the distributions corresponding to $N^2\times N^2$ matrices with the tensor products of smaller matrices, as illustrated in \ref{fourplots}.  The differences between the compared matrices include noticeable qualitative differences (such as symp's repulsion from $0$), as well as subtler cases involving scale (the tensor products seem to have tails that spread further).  Although we have illustrated only the $N=4$ case, qualitative attributes persist in cases of higher $N$ (for instance, the repulsion of symp), implying that these differences exist in the limit.

There is a notable similarity between the distributions for orth/orth and symp/symp, as illustrated in Figures \ref{4pdf} through \ref{12pdf}.  Although there seems to be a systematic bias that causes symp/symp to overshoot orth/orth for a time, and then reverse, this phenomenon seems to diminish as $N$ increases, implying that in the limit these two distributions are equal.  This implies that perhaps the model is salvagable, as the combination of orthogonal with orthogonal should look like the combination of symplectic with symplectic (since $-1\cdot -1=1\cdot 1$).  To test this, we have looked at orth/orth/orth and symp/symp/symp, which would be different if this multiplicative structure were different.  However, these histograms are very similar as well.  Although an interesting phenomenon seems to be causing these distributions to approach one another, it seems fundamentally different from the multiplication of symmetry constants.

\begin{figure}[hbt]
\begin{center}
\includegraphics[
height=2.50in
]
{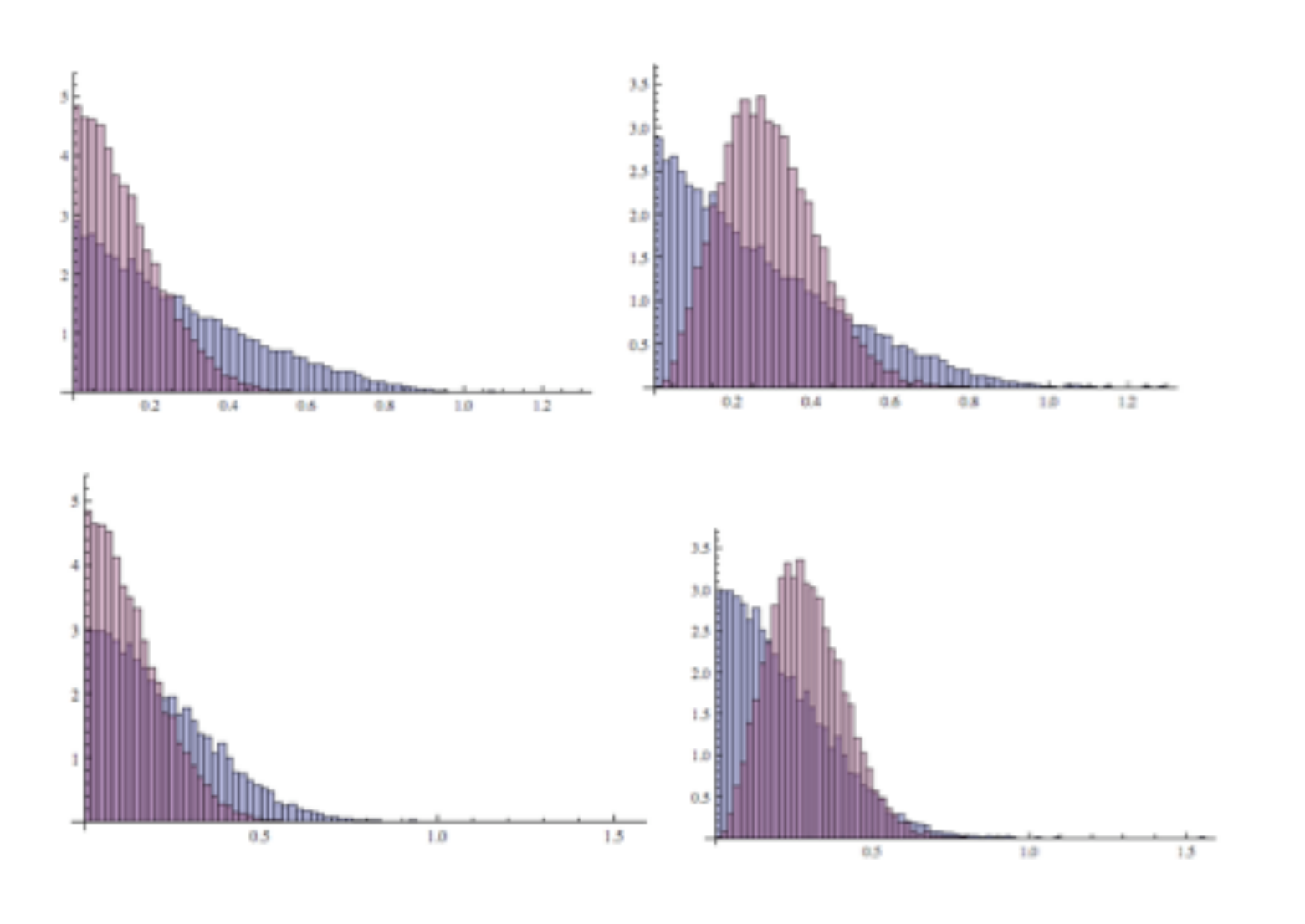}
\caption{Lowest eigenangle histograms in the case of $N=4$.  From left to right then top down, we have the following comparisons: orth/orth vs. orth, orth/orth vs. symp, symp/symp vs. orth, and symp/symp vs. orth. (Blue is the first type, red is the second type.)}
\label{fourplots}
\end{center}
\end{figure}

\begin{figure}[hbt]
\begin{center}
\includegraphics[
height=1.5in
]
{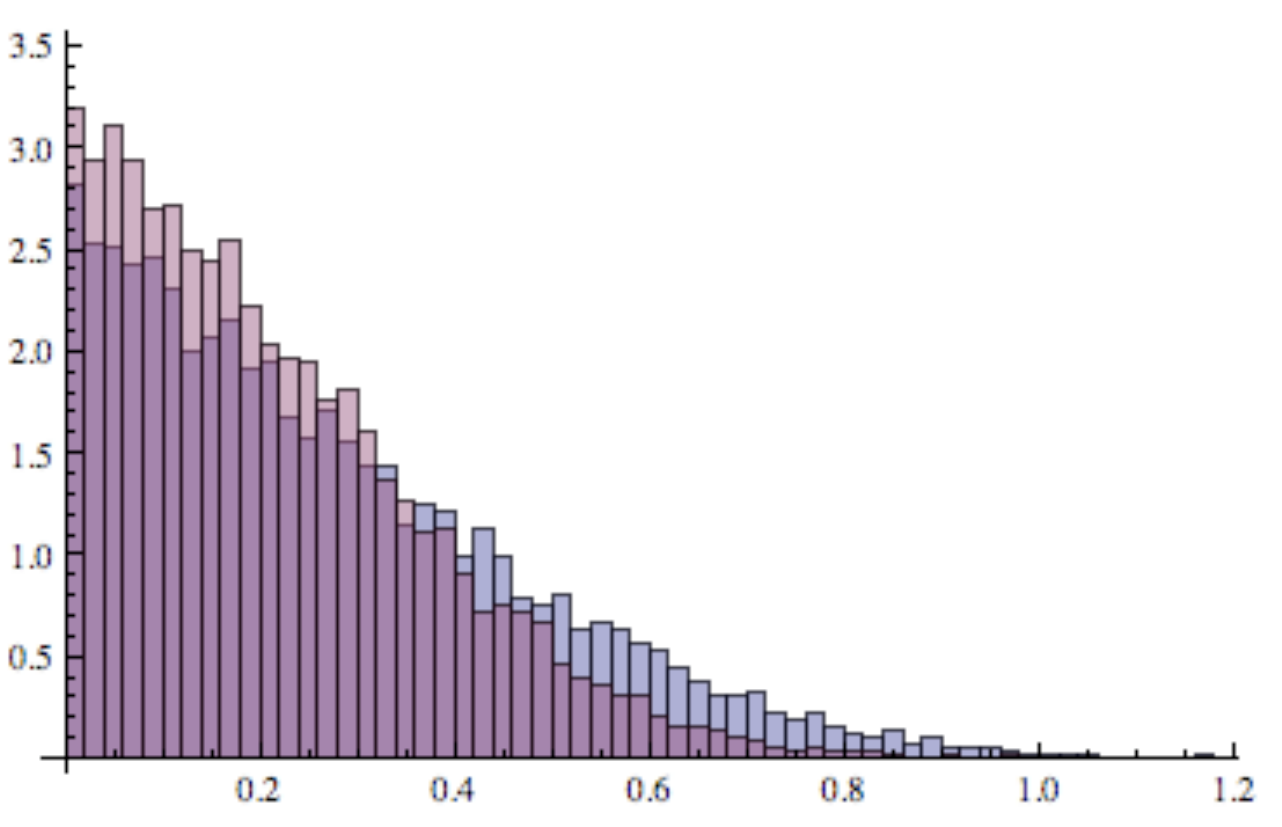}
\caption{Histogram of smallest eigenangles for orth/orth (blue) vs. symp/symp (red) when $N=4$.}
\label{4pdf}
\end{center}
\end{figure}

\begin{figure}[hbt]
\begin{center}
\includegraphics[
height=1.5in
]
{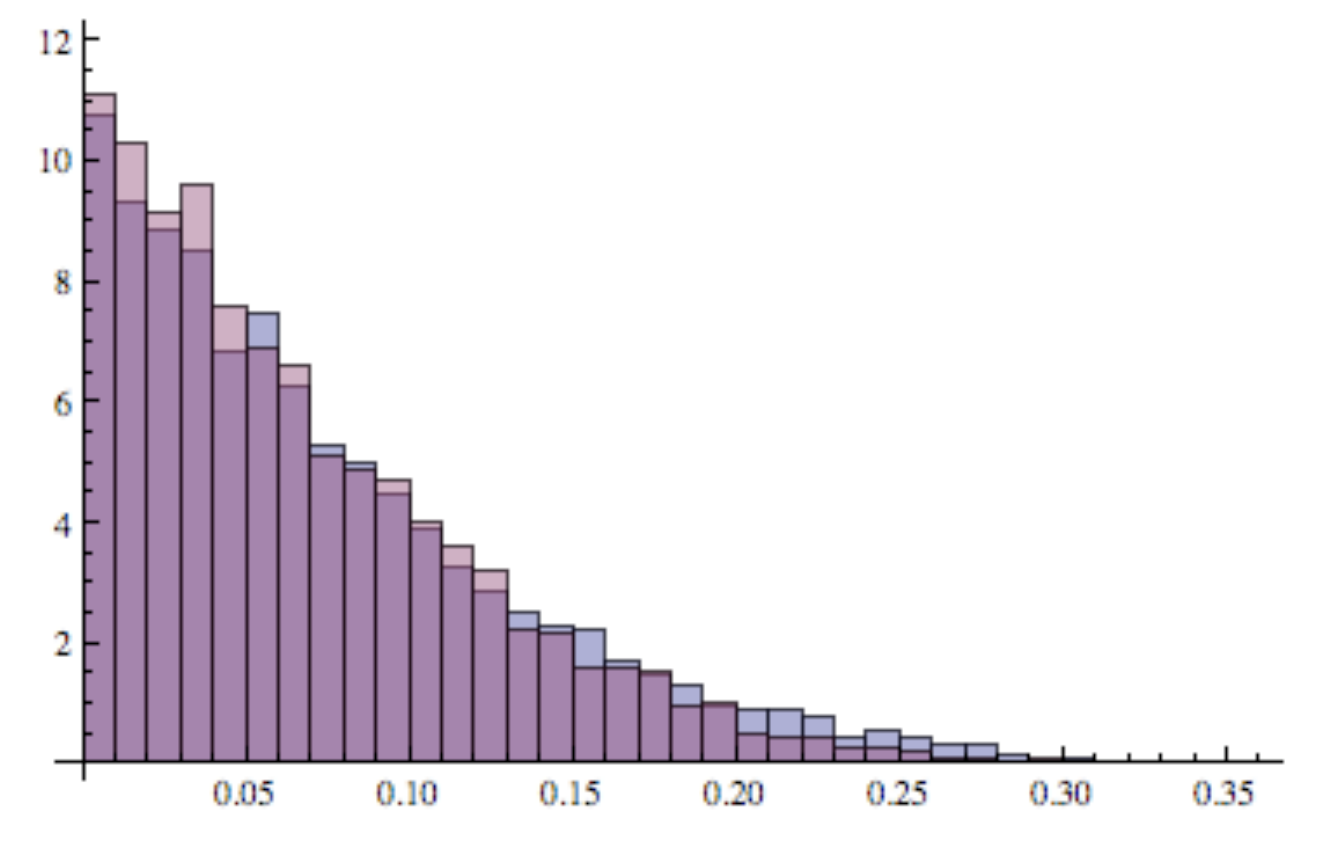}
\caption{Same as Figure \ref{4pdf} with $N=8$.}
\label{8pdf}
\end{center}
\end{figure}

\begin{figure}[hbt]
\begin{center}
\includegraphics[
height=1.5in
]
{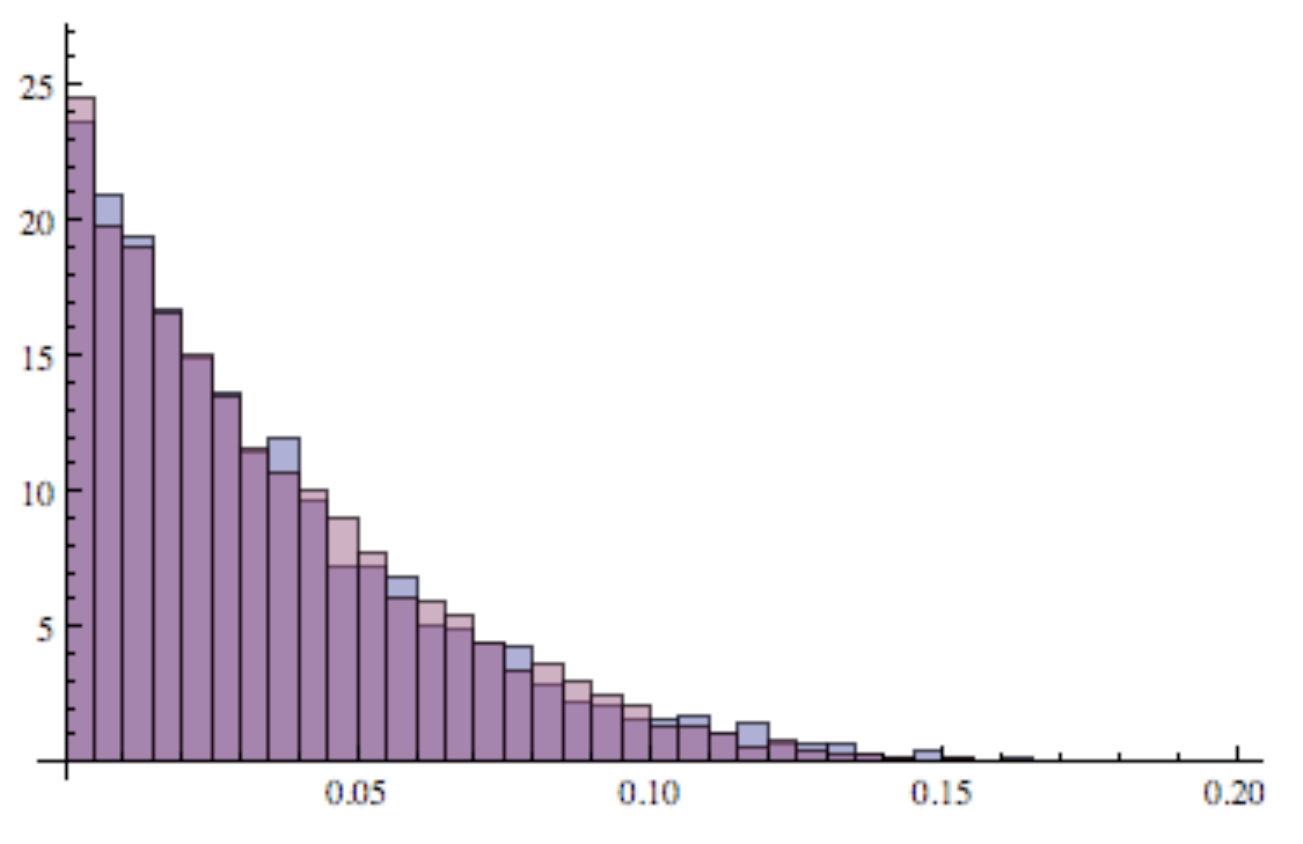}
\caption{Same as Figure \ref{4pdf} with $N=12$.}
\label{12pdf}
\end{center}
\end{figure}

\subsection{What Happens When We Include Unitary}

Although the predicted eigenangle statistics fail to hold for combinations of orthogonal and symplectic matrices, it seems reasonable that the ``$0$'' role of unitary will hold.  As its eigenangles are uniformly distributed, it is not unreasonable to conjecture that a combination of unitary with any other type of matrix will yield unitary.

If we look at the eignangle statistics for unitary/orth and unitary/symp, we see great similarity; indeed, considering Figure \ref{uorthusymp}, any difference between the two distributions seems to be random noise.  However, comparing these distributions with unitary/unitary shows a significant bias in overshooting/undershooting, suggesting that the structure is not as simple as multiplication by $0$; see Figure \ref{unitaryunitary}

\begin{figure}[hbt]
\begin{center}
\includegraphics[
height=1.5in
]
{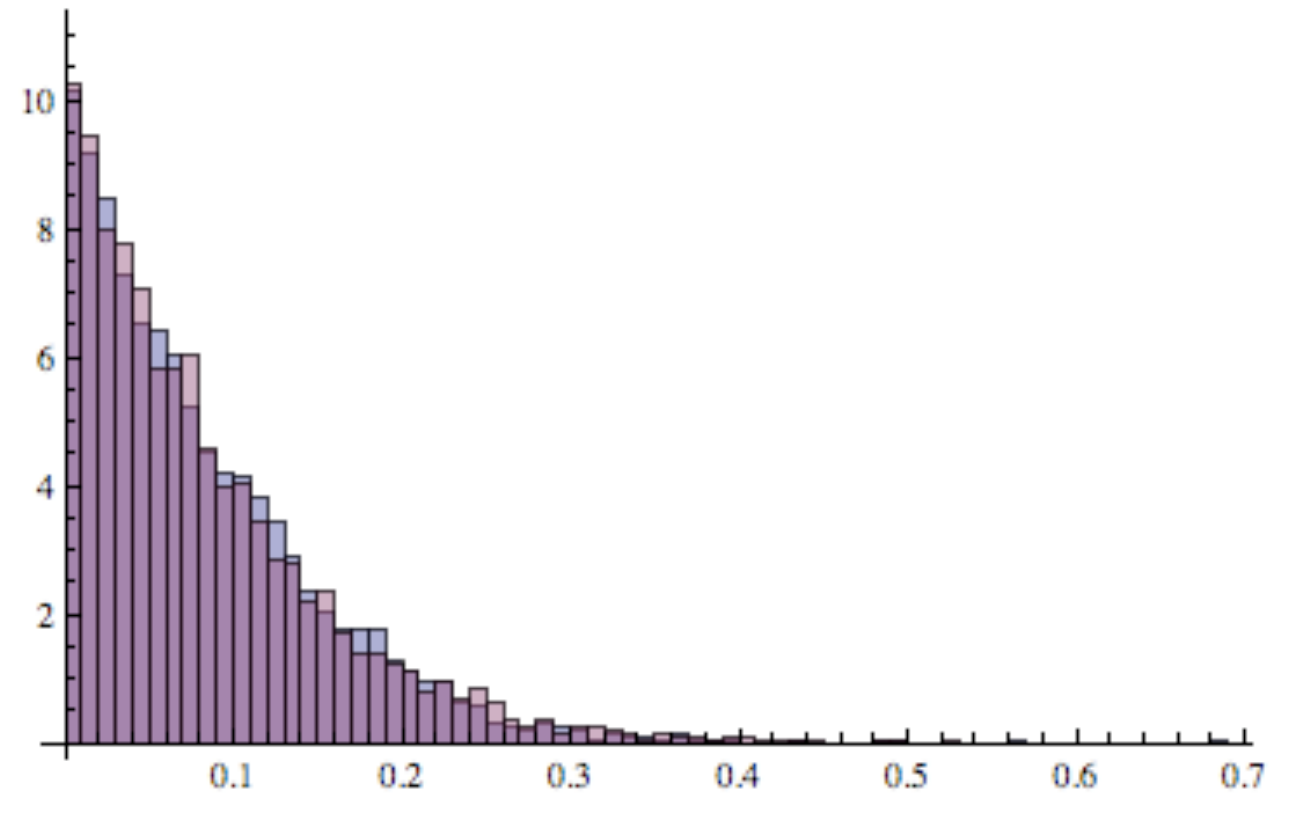}
\caption{Lowest eigenangle statistics for unitary/orth and unitary/symp for $N=8$.}
\label{uorthusymp}
\end{center}
\end{figure}

\begin{figure}[hbt]
\begin{center}
\includegraphics[
height=1.5in
]
{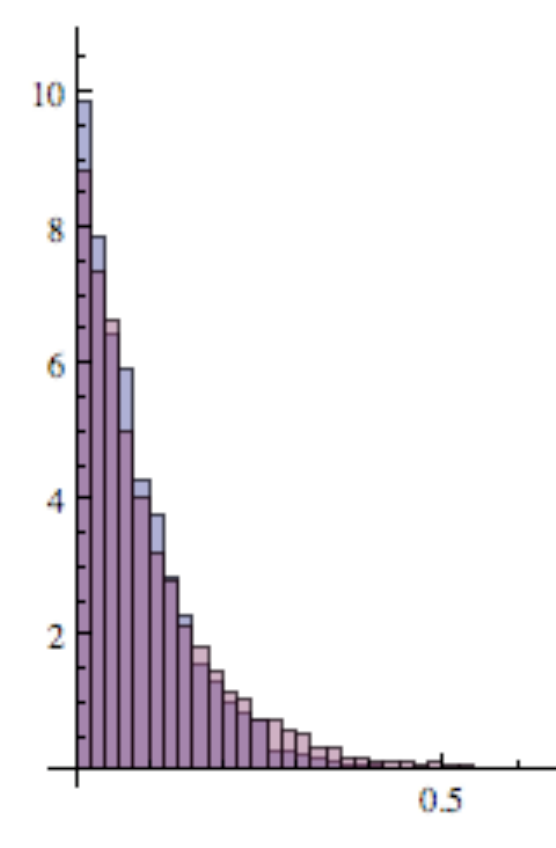}
\caption{Lowest eigenangle statistics for unitary/symp and unitary/unitary for $N=8$.}
\label{unitaryunitary}
\end{center}
\end{figure}

\subsection{Other Methods of Combining Matrices}

There are other methods of combining two matrices that could potentially model the Rankin-Selberg convolution.  These include the Tracy-Singh and Khatri-Rao products, which act on partitioned matrices.  A natural partition of a $2N\times 2N$ matrix being into four equal-sized parts, we have investigated the behavior of matrices under these operations.  In the case of the Tracy-Singh product, we find that eigenvalues are multiplicative as they are under the tensor product.  It follows that this product will affect eigenangle statistics in precisely the same fashion as the tensor product, giving us no new information.  The Khatri-Rao product behaves differently.  In the case of combining two diagonal matrices, the resulting eigenvalues are a (usually proper) subset of the products of eigenvalues of the original matrices; this implies that the ordering of eigenvalues along the diagonal of a matrix matters, something that is not usually considered when looking at entire families of matrices.  In addition, when considering non-diagonal unitary matrices, the resulting eigenvalues do not even necessarily have magnitude $1$, rendering eigenangles an unnatural statistic to study.

\subsection{Random Matrix Conclusions}

Based on both qualitative and quantitative results, it seems that taking the Kronecker product of matrices (as well as applying Tracy-Singh or Khatri-Rao) is an inadequate model for convolving families of $L$-functions.  However, these findings suggest certain interesting patterns in eigenangle statistics of Kronecker products of different families, such as $\text{orth}^n$ and $\text{symp}^n$ appearing to approach one another.  While not necessarily useful to modeling $L$-functions, rigorously exploring these patterns may be interesting in their own right.

\newpage

\appendix

\section{Key Lemmas}\label{sec:appendixkeylemmas}

In this appendix we include three key lemmas used in sections \ref{section2} and \ref{section3}.  For the first three lemmas, two on sums over fundamental discriminants and one of $S_{even,1}$, the lemmas and the proofs thereof are taken or modified (with permission) from \cite{Mil3}.  For the third, we write sums from Proposition \ref{bconverges} as a main term plus an error term.


\begin{lemma}\label{firstmillerlemma} Let $d$ denote an even
fundamental discriminant at most $X$, and set $X^\ast = \sum_{d \le
X} 1$. Then \be X^\ast   \ = \ \frac{3}{\pi^2}X + O(X^{1/2}) \ee and
for $p \le X^{1/2}$ we have \be \sum_{d \le X \atop p|d} 1 \ = \
\frac{X^\ast}{p+1} + O(X^{1/2}). \ee
\end{lemma}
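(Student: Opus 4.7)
The plan is to prove both estimates by standard Möbius inversion, reducing everything to the count of square-free integers in arithmetic progressions.

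First I would reformulate the problem. An even fundamental discriminant may be written uniquely as $d = 4m$ with $m$ square-free and $m \equiv 2$ or $3 \pmod 4$. Hence
\begin{align*}
X^{\ast} \ = \ \sum_{a \in \{2,3\}} \sum_{\substack{m \le X/4 \\ m \equiv a\,(4)}} \mu^{2}(m), \qquad \sum_{\substack{d \le X \\ p \mid d}} 1 \ = \ \sum_{a \in \{2,3\}} \sum_{\substack{m \le X/4 \\ m \equiv a\,(4) \\ p \mid m}} \mu^{2}(m)
\end{align*}
for odd primes $p$ (the case $p=2$ is trivial since $d$ is already a multiple of $4$, so I would note it does not enter into the stated range of interest).

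Next I would expand via the identity $\mu^{2}(m) = \sum_{k^{2} \mid m} \mu(k)$ and swap the order of summation. The inner count becomes the number of multiples of $k^{2}$ in $(0, X/4]$ lying in a prescribed residue class $a \pmod 4$ (and, for the second estimate, also divisible by $p$). If $k$ is even then $k^{2} \equiv 0 \pmod 4$, which is incompatible with $a \in \{2,3\}$, so only odd $k$ contribute. For odd $k$, $k^{2}$ is a unit mod $4$, and the count is $\frac{X}{16 k^{2}} + O(1)$ for each $a$. Since $k^{2} \mid m \le X/4$ forces $k \le \sqrt{X}/2$, the sum truncates there, and the accumulated $O(1)$ error is $O(\sqrt{X})$. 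The main term is then a constant multiple of $X$ times $\sum_{k \text{ odd}} \mu(k)/k^{2} = \prod_{p > 2}(1 - p^{-2})$, which evaluates to an explicit rational multiple of $\pi^{-2}$.

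For the second estimate, after writing $m = p n$ with $(n, p) = 1$ (using that $m$ is square-free), the Möbius sum becomes an analogous count with the additional coprimality condition $(k, p) = 1$. The ratio of Euler products
\begin{align*}
\frac{\prod_{q > 2, \, q \ne p}(1 - q^{-2})}{\prod_{q > 2}(1 - q^{-2})} \cdot \frac{1}{p} \ = \ \frac{1}{p\,(1 - p^{-2})} \ = \ \frac{p}{p^{2} - 1} \cdot \frac{p - 1}{p - 1}
\end{align*}
produces (after combining with the extra $1/p$ from $m = pn$) precisely the factor $1/(p+1)$ that converts the main term of $X^{\ast}$ into the claimed main term for the divisibility count. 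Uniformity in $p \le \sqrt{X}$ follows from the error analysis: the truncation range is $k \le \sqrt{X/(4p)}$, and the $O(1)$ rounding errors accumulate to $O(\sqrt{X/p}) = O(\sqrt{X})$, as required.

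The proof is essentially bookkeeping rather than requiring a deep new idea; the only genuine care needed is in Step 4, where one must correctly identify the Euler-product constant so that the factor $(p+1)^{-1}$ emerges cleanly, and in verifying that the $O(\sqrt{X})$ error is uniform across all primes $p \le \sqrt{X}$ rather than deteriorating as $p$ grows.
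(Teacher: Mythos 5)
Your overall machinery (M\"obius inversion $\mu^2(m)=\sum_{k^2\mid m}\mu(k)$, counting multiples of $k^2$ in residue classes mod $4$, truncating at $k\ll\sqrt X$) is exactly the paper's, but your opening reformulation misidentifies the set being counted, and this breaks both main terms. In this paper ``even fundamental discriminant'' means a fundamental discriminant whose associated character is even, i.e.\ $d>0$; it does \emph{not} mean $d$ is an even integer. The family therefore consists of \emph{both} the square-free $d\equiv 1\pmod 4$ \emph{and} the $d=4m$ with $m\equiv 2,3\pmod 4$ square-free. Your decomposition keeps only the second class, which has $\frac{1}{\pi^2}X+O(X^{1/2})$ elements, not $\frac{3}{\pi^2}X$; the paper gets $\frac{2}{\pi^2}X$ from the $d\equiv 1\pmod 4$ piece (isolated via the factor $\tfrac12(1+\chi_4(d))$, with the $\chi_4$ character sum absorbed into $O(X^{1/2})$) and $\frac{1}{\pi^2}X$ from the $d=4m$ piece. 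The same omission makes your treatment of $p=2$ wrong rather than ``trivial'': under your reading every $d$ is divisible by $2$, so the second claim would fail for $p=2$, whereas in the correct family exactly the $d\equiv 0\pmod 4$ third of the discriminants is divisible by $2$, which is precisely how the paper recovers the factor $1/(2+1)$.

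There is also an arithmetic slip in your Euler-product step for general odd $p$. Writing $m=pn$ with $(n,p)=1$ contributes \emph{two} factors of $p$-dependence: the $1/p$ from the range $n\le Y/p$ together with the density $\frac{p-1}{p}$ of integers coprime to $p$ in the linear count, and the factor $\bigl(1-p^{-2}\bigr)^{-1}$ from removing $p$ from the M\"obius Euler product; the product is $\frac1p\cdot\frac{p-1}{p}\cdot\frac{p^2}{p^2-1}=\frac{1}{p+1}$. Your displayed identity yields only $\frac{1}{p}\cdot\frac{1}{1-p^{-2}}=\frac{p}{p^2-1}$, which is off by $\frac{p-1}{p}$, and appending $\frac{p-1}{p-1}$ does not repair it. The uniformity claim for the error term is fine, but the proof as written does not establish the lemma until the family is corrected to include the $d\equiv1\pmod4$ discriminants and the coprimality density is restored.
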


\begin{proof} We first prove the claim for $X^\ast$, and then
indicate how to modify the proof when $p|d$.

Let $\mu:\mathbb{N}\rightarrow \{-1,0,1\}$ be the Mobius function, meaning that $\mu(n)$ is $0$ if $n$ is not square-free and is $(-1)^k$ if $n$ is square-free with $k$ distinct prime factors.  First assume that $d \equiv 1 \bmod 4$, so we are considering
even fundamental discriminants $\{d \le X: d \equiv 1 \bmod 4,
\mu(d)^2=1\}$; it is trivial to modify the arguments below for $d$
such that $d/4 \equiv 2$ or $3$ modulo $4$ and $\mu(d/4)^2=1$. Let
$\chi_4(n)$ be the non-trivial character modulo 4: $\chi_4(2m) = 0$
and \be \twocase{\chi_4(n) \ = \ }{1}{if $n\equiv 1 \bmod 4$}{0}{if
$n \equiv 3 \bmod 4$.} \ee We have \bea\label{eq:startSXanalysis}
S(X) & \ = \ & \sum_{d\le X \atop \mu(d)^2=1,\ d\equiv 1 \bmod 4} 1
\nonumber\\ & \ = \ & \sum_{d \le
X \atop 2 \notdiv d} \mu(d)^2 \cdot \frac{1 + \chi_4(d)}2 \nonumber\\
&=& \foh \sum_{d\le X \atop 2 \notdiv d} \mu(d)^2 + \foh \sum_{d\le
X} \mu(d)^2 \chi_4(d) \ = \ S_1(X) + S_2(X).\eea By M\"obius
inversion \be \twocase{\sum_{m^2|d} \mu(m) \ = \ }{1}{if $d$ is
square-free}{0}{otherwise.} \ee Thus \bea S_1(X) & \ = \ & \foh
\sum_{d \le X \atop 2 \notdiv d} \sum_{m^2|d} \mu(m) \nonumber\\ &=&
\foh \sum_{m \le X^{1/2}\atop 2\notdiv m} \mu(m) \cdot \sum_{d\ \le\
X/m^2 \atop 2\notdiv d} 1 \nonumber\\ &=& \foh \sum_{m\le X^{1/2}
\atop 2\notdiv m} \mu(m)\left(\frac{X}{2m^2} + O(1)\right)
\nonumber\\ &=& \frac{X}{4} \sum_{m=1 \atop 2\notdiv m}^\infty
\frac{\mu(m)}{m^2} + O(X^{1/2}) \nonumber\\ &=& \fof
\frac{6}{\zeta(2)} \cdot \left(1 - \frac1{2^2}\right)^{-1}\cdot X +
O(X^{1/2}) \nonumber\\ &=& \frac{2}{\pi^2}X + O(X^{1/2}) \eea
(because we are missing the factor corresponding to $2$ in
$1/\zeta(2)$ above). Arguing in a similar manner shows $S_2(X) =
O(X^{1/2})$; this is due to the presence of $\chi_4$, giving us \be
S_2(X) \ = \ \foh \sum_{m \le X^{1/2}} \chi_4(m^2)\mu(m) \sum_{d\le
X/m^2} \chi_4(d) \ \ll \ X^{1/2} \ee (because we are summing
$\chi_4$ at consecutive integers, and thus this sum is at most 1). A
similar analysis shows that the number of even fundamental
discriminants $d\le X$ with $d/4 \equiv 2$ or $3$ modulo $4$ is
$X/\pi^2 + O(X^{1/2})$. Thus \be \sum_{d\le X \atop d\ {\rm an\
even\ fund.\ disc.}}1 \ = \ X^\ast \ = \
\frac{3}{\pi^2}X+O(X^{1/2}).\ee

We may trivially modify the above calculations to determine the
number of even fundamental discriminants $d\le X$ with $p|d$ for a
fixed prime $p$. We first assume $p\equiv 1 \bmod 4$. In
\eqref{eq:startSXanalysis} we replace $\mu(d)^2$ with $\mu(pd)^2$,
$d \le X$ with $d \le X/p$, $2\notdiv d$ and $(2p,d)=1$. These imply
that $d \le X$, $p|d$ and $p^2$ does not divide $d$. As $d$ and $p$
are relatively prime, $\mu(pd) = \mu(p)\mu(d)$ and the main term
becomes \bea S_{1;p}(X) & \ = \ &
\foh \sum_{d \le X/p \atop (2p,d)=1} \sum_{m^2|d} \mu(m) \nonumber\\
&=& \foh \sum_{m \le (X/p)^{1/2}\atop (2p,m)=1} \mu(m) \cdot
\sum_{d\ \le\ (X/p)/m^2 \atop (2p,d)=1} 1 \nonumber\\ &=& \foh
\sum_{m\le (X/p)^{1/2} \atop
(2p,m)=1} \mu(m)\left(\frac{X/p}{m^2} \cdot \frac{p-1}{2p} + O(1)\right) \nonumber\\
&=& \frac{(p-1)X}{4p^2} \sum_{m=1 \atop (2p, m)=1}^\infty
\frac{\mu(m)}{m^2} + O(X^{1/2}) \nonumber\\ &=& \fof
\frac{6}{\zeta(2)} \cdot \left(1 - \frac1{2^2}\right)^{-1} \cdot
\left(1-\frac1{p^2}\right)^{-1} \cdot \frac{(p-1)X}{p^2} + O(X^{1/2}) \nonumber\\
&=& \frac{2X}{(p+1)\pi^2} + O(X^{1/2}), \eea and the cardinality of
this piece is reduced by $(p+1)^{-1}$ (note above we used $\#\{n \le
Y: (2p,n)=1\}$ $=$ $\frac{p-1}{2p}Y+O(1)$). A similar analysis holds
for $S_{2;p}(X)$, as well as the even fundamental discriminants $d$
with $d/4 \equiv 2$ or $3$ modulo $4$).

We need to trivially modify the above arguments if $p\equiv 3 \bmod
4$. If for instance we require $d\equiv 1 \bmod 4$ then instead of
replacing $\mu(d)^2$ with $\mu(d)^2 (1 + \chi_4(d))/2$ we replace it
with $\mu(pd)^2 (1-\chi_4(d))/2$, and the rest of the proof proceeds
similarly.

For $p=2$, we have a different situation. Note if $d\equiv 1
\bmod 4$ then 2 {never} divides $d$, while if $d/4 \equiv 2$ or
3 modulo 4 then 2 {always} divides $d$. There are $3X/\pi^2 +
o(X^{1/2})$ even fundamental discriminants at most $X$, and $X/\pi^2
+ O(x^{1/2})$ of these are divisible by 2. Thus, if our family is
all even fundamental discriminants, we do get the factor of
$1/(p+1)$ for $p=2$, as one-third (which is $1/(2+1)$ of the
fundamental discriminants in this family are divisible by $2$.
\end{proof}

\begin{lemma}\label{secondmillerlemma} Let $d$ denote an
even fundamental discriminant at most $X$ and $X^\ast = \sum_{d\le
X} 1$ and let $z = \nu - i w \frac{\log (X/2\pi)}{\pi}$ with $w=\frac{1}{4}-\epsilon$ ($\epsilon>0$ small).
Then \be \sum_{d\le X} e^{-2\pi i z\frac{\log(d/2\pi)}{\log (X/2\pi)}} \ = \ X^*e^{-2\pi i z}\left(1-\frac{2\pi i z}{\log(X/2\pi)}\right)^{-1}+O(X^{2\epsilon}). \ee
\end{lemma}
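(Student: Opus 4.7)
The strategy is Abel (partial) summation against the counting function $N(t) := \sum_{d \le t} 1$, where $d$ ranges over even fundamental discriminants. By Lemma \ref{firstmillerlemma} we have $N(t) = \frac{3}{\pi^2}t + E(t)$ with $E(t) \ll t^{1/2}$, and $N(X) = X^\ast$. Writing $f(t) := (t/2\pi)^{-2\pi i z/L}$ with $L = \log(X/2\pi)$, so that the summand equals $f(d)$ and $(X/2\pi)^{-2\pi i z/L} = e^{-2\pi i z}$, partial summation gives
\[
\sum_{d \le X} f(d) \ = \ N(X)f(X) - \int_1^X N(t) f'(t)\,dt \ = \ \frac{3}{\pi^2}\int_1^X f(t)\,dt + \frac{3}{\pi^2}f(1) + \mathcal{E},
\]
where the last step is another integration by parts in the main piece, and $\mathcal{E}$ collects the contributions of $E(t)$ at the endpoints and in the integral.

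For the main term I would compute the integral exactly. Setting $\alpha := -2\pi i z/L$, the antiderivative of $(t/2\pi)^\alpha$ is $\frac{2\pi}{\alpha+1}(t/2\pi)^{\alpha+1}$, which at $t=X$ gives $\frac{2\pi}{1 - 2\pi i z/L}\cdot \frac{X}{2\pi} \cdot e^{-2\pi i z}$. The lower-endpoint contribution and the $f(1)$ piece each have modulus $(2\pi)^{2w-1} \ll 1$, so both are absorbed into an $O(1)$ remainder. Using $\frac{3X}{\pi^2} = X^\ast + O(X^{1/2})$ and $|e^{-2\pi i z}| = (X/2\pi)^{-2w} \ll X^{-1/2+2\epsilon}$, the replacement error $(3X/\pi^2 - X^\ast)$ times the main term is $O(X^{2\epsilon})$, leaving exactly the claimed main term $X^\ast e^{-2\pi i z}(1 - 2\pi i z/L)^{-1}$.

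For the error term, I would use the key bound $|f(t)| = (t/2\pi)^{-2w} = (t/2\pi)^{-1/2+2\epsilon}$, coming directly from the shift $w = \tfrac14 - \epsilon$; this is the arithmetic heart of the argument. Then $|E(X)f(X)| \ll X^{1/2}\cdot X^{-1/2+2\epsilon} = X^{2\epsilon}$, and since $f'(t) = -\frac{2\pi i z}{Lt}f(t)$ we get
\[
\int_1^X |E(t) f'(t)|\,dt \ \ll \ \frac{|z|}{L}\int_1^X t^{1/2}\cdot t^{-3/2+2\epsilon}\,dt \ \ll \ \frac{|z|}{L}\,X^{2\epsilon},
\]
which is $O(X^{2\epsilon})$ since $|z|/L$ is bounded on the range of $\nu$ where this lemma is subsequently applied (under the Schwartz decay of $g$ from Proposition \ref{boundedsch}).

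The main (and essentially only) obstacle is the accounting: one must be careful that every stray boundary term and every substitution of $3X/\pi^2$ for $X^\ast$ produces an error no larger than $X^{2\epsilon}$, which works exactly because the shift $w = \tfrac14 - \epsilon$ gives the pointwise bound $|f(t)| \ll t^{-1/2+2\epsilon}$, precisely balancing the $O(t^{1/2})$ size of $E(t)$. If one tried to push $w$ to or beyond $\tfrac14$, this balance would fail; this is what ties the error exponent in Lemma \ref{secondmillerlemma} to the shifting restriction coming from $B_\Delta$.
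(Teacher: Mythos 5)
Your proposal is correct and follows essentially the same route as the paper: partial summation against the counting function $N(t)=\frac{3}{\pi^2}t+O(t^{1/2})$ from Lemma \ref{firstmillerlemma}, exact evaluation of the resulting main-term integral to produce $X^\ast e^{-2\pi i z}(1-2\pi i z/L)^{-1}$, and the key cancellation $X^{1/2}\cdot|f(t)|\ll X^{1/2}\cdot X^{-2w}=X^{2\epsilon}$ coming from the shift $w=\tfrac14-\epsilon$. The only (cosmetic) differences are that you keep the factor $(t/2\pi)$ inside $f$ rather than pulling out $e^{2\pi i z\log(2\pi)/L}$ first, and you are slightly more explicit than the paper about why the factor $|z|/L$ in the integrated error is harmless.
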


\begin{proof}  We may rewrite our sum as
\begin{align}
\sum_{d\leq X}e^{-2\pi  i z\frac{\log(d/2\pi)}{L}}\nonumber
\ = \ &\sum_{d\leq X}e^{-2\pi  i z\frac{(\log d-\log 2\pi)}{L}}\nonumber
\\ \ = \ &\sum_{d\leq X}e^{-2\pi  i z\frac{\log d}{L}}e^{2\pi  i z\frac{\log 2\pi}{L}}\nonumber
\\ \ = \ &e^{2\pi  i z\frac{\log 2\pi}{L}}\sum_{d\leq X}d^{-2\pi i z/L}.\label{tobepartialed}
\end{align}
Recall the integral version of partial summation, namely that if $h(x)$ is a continuously differentiable function and $A(x)=\sum_{n\leq x}a_n$, then
\be \sum_{n\leq x}a_nh(n)=A(x)h(x)-\int_1^x A(u)h'(u)du.
\ee Considering $h(x)=x^{-2\pi i z/L}$ and $a_n=1$ for $n$ an even fundamental discriminant and $a_n=0$ otherwise, by Lemma
\ref{firstmillerlemma} we have \be A(u)\ = \ \sum_{d\le u} 1 \ = \
\frac{3u}{\pi^2} + O(u^{1/2}). \ee
This allows us to rewrite equation \eqref{tobepartialed} as
\begin{align}
&e^{2\pi  i z\frac{\log 2\pi}{L}}\sum_{d\leq X}d^{-2\pi i z/L}\nonumber
\\ =  &e^{2\pi  i z\frac{\log 2\pi}{L}}\left[\left(\frac{3X}{\pi^2}+O(X^{1/2})\right)X^{-2\pi i z/L}-\int_{1}^X \left(\frac{3u}{\pi^2} + O(u^{1/2})\right) \cdot u^{-2\pi i z/L}u^{-1}\left(\frac{-2\pi i z}{L}\right)du\right]
\end{align}
Note that we may rewrite $\frac{-2\pi i z}{L}=\frac{-2\pi i (\nu - i w L/\pi)}{L}=-2 w+i\delta$, where $\delta\in\mathbb{R}$.  As order of magnitude depends only on the real part of the exponent, we have $O(X^{1/2})X^{-2\pi i z/L}=)(X^{1/2-2w})=O(X^{2\epsilon})$ due to our choice of $w$.  Similarly, we may write $O(u^{1/2})u^{-2\pi i z/L}u^{-1}=O(u^{1/2-2w-1})=O(u^{-1+2\epsilon})$, meaning the integral (from $1$ to $X$) of that term ends up as $O(X^{2\epsilon})$.  As multiplying these terms by the $e^{2\pi i z\frac{\log 2\pi}{L}}$ only decreases their order of magnitude in $X$, we have
\begin{align}
&e^{2\pi  i z\frac{\log 2\pi}{L}}\sum_{d\leq X}d^{-2\pi i z/L}\nonumber
\\ =  &e^{2\pi  i z\frac{\log 2\pi}{L}}\left[\frac{3X}{\pi^2}X^{-2\pi i z/L}-\int_{1}^X \frac{3u}{\pi^2} \cdot u^{-2\pi i z/L}u^{-1}\left(\frac{-2\pi i z}{L}\right)du\right]+O(X^{2\epsilon})\nonumber
\\ \ = \ &e^{2\pi  i z\frac{\log 2\pi}{L}}\left[\frac{3}{\pi^2}X^{1-2\pi i z/L}+\frac{\frac{3}{\pi^2}\cdot 2\pi i z}{L}\int_{1}^X u^{-2\pi i z/L}du\right]+O(X^{2\epsilon})\nonumber
\\ \ = \ &e^{2\pi  i z\frac{\log 2\pi}{L}}\left[\frac{3}{\pi^2}X^{1-2\pi i z/L}+\frac{\frac{3}{\pi^2}\cdot 2\pi i z}{L}\frac{X^{1-2\pi i z/L}}{1-2\pi i z/L}\right]+O(X^{2\epsilon})\nonumber
\\ \ = \ &\frac{3}{\pi^2}X^{1-2\pi i z/L}e^{2\pi  i z\frac{\log 2\pi}{L}}\left[1+\frac{2\pi i z}{L}\sum_{k=0}^\infty\left(\frac{L}{2\pi i z}\right)^k\right]+O(X^{2\epsilon})\nonumber
\\ \ = \ &\frac{3}{\pi^2}Xe^{-2\pi i z\left(\frac{\log X}{L}-\frac{\log 2\pi }{L}\right)}\left(1-\frac{2\pi i z}{L}\right)^{-1}+O(X^{2\epsilon})\nonumber
\\ \ = \ &X^*e^{-2\pi i z}\left(1-\frac{2\pi i z}{\log(X/2\pi)}\right)^{-1}+O(X^{2\epsilon}).
\end{align}

\end{proof}

\begin{lemma}\label{lem:ntsumkeven1} Notation as in Section 2, we have \bea S_{{\rm even};1} & \ =
\ &  \frac{g(0)}2 + \frac{1}{L}\intii
g(\nu) \left(\frac{L'}{L}\left(1+\frac{2\pi i \nu}{L},{\rm sym}^2\Delta\right) - \frac{\zeta'}{\zeta}\left(1+\frac{2\pi i \nu}{L}\right)\right) d\nu
\nonumber\\ \eea
\end{lemma}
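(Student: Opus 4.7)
The plan is to recognize $S_{{\rm even};1}$ as a prime sum that is essentially the "explicit formula" contribution coming from the logarithmic derivative of $\zeta(s) L(s,{\rm sym}^2\Delta)^{-1}$, and to use a contour-shift argument to match the two sides. The crucial identity, coming from the Euler products (recall $L(s,{\rm sym}^2\Delta)=\prod_p(1-\alpha_p^2/p^s)^{-1}(1-1/p^s)^{-1}(1-\bar\alpha_p^2/p^s)^{-1}$), is
\[
\frac{L'}{L}(s,{\rm sym}^2\Delta)-\frac{\zeta'}{\zeta}(s)\ =\ -\sum_p\log p\sum_{k=1}^\infty\frac{\alpha_p^{2k}+\bar\alpha_p^{2k}}{p^{ks}}\qquad({\rm Re}(s)>1).
\]
To connect this to $S_{{\rm even};1}$, I would represent $\widehat{g}(k\log p/L)$ via Fourier inversion; since $\widehat{g}$ has compact support, Paley--Wiener makes $g$ entire of rapid horizontal decay, so I can shift the $\nu$-contour to $\mathrm{Im}(\nu)=-c$ for any $c>0$ and write $\widehat{g}(k\log p/L)=\int_{\mathrm{Im}(\nu)=-c}g(\nu)p^{-2\pi ik\nu/L}\,d\nu$. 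With the substitution $s=1+2\pi i\nu/L$ this contour corresponds to $\mathrm{Re}(s)=1+2\pi c/L>1$.

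Next, I would substitute this representation into the defining expression for $S_{{\rm even};1}$ and invoke Fubini--Tonelli (absolute convergence on the shifted contour is easy: $|\alpha_p^{2k}+\bar\alpha_p^{2k}|\le 2$ and the Dirichlet series for $\zeta'/\zeta-L'/L$ converges absolutely off the edge $\mathrm{Re}(s)=1$) to interchange the $p,k$-sum with the $\nu$-integral. Applying the Euler-product identity above yields
\[
S_{{\rm even};1}\ =\ \frac{1}{L}\int_{\mathrm{Im}(\nu)=-c}g(\nu)\left[\frac{L'}{L}\!\left(1+\tfrac{2\pi i\nu}{L},{\rm sym}^2\Delta\right)-\frac{\zeta'}{\zeta}\!\left(1+\tfrac{2\pi i\nu}{L}\right)\right]d\nu.
\]

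The final step is to shift this contour back to the real line. The integrand is meromorphic in the strip $-c\le\mathrm{Im}(\nu)\le 0$, and the only singularity is a simple pole at $\nu=0$ coming from the pole of $\zeta(s)$ at $s=1$; here I use the classical facts that $L(s,{\rm sym}^2\Delta)$ is entire and nonvanishing near $s=1$, so $L'/L$ contributes no pole. Using $\zeta'/\zeta(s)=-1/(s-1)+O(1)$ and $s-1=2\pi i\nu/L$, the residue of $g(\nu)[L'/L-\zeta'/\zeta](\cdots)$ at $\nu=0$ equals $g(0)L/(2\pi i)$. Applying Sokhotski--Plemelj (equivalently, indenting the contour with a small semicircle around the pole) gives
\[
\int_{\mathrm{Im}(\nu)=-c}\ =\ \mathrm{PV}\!\int_{\mathbb{R}}+\,i\pi\cdot\mathrm{Res}_{\nu=0},
\]
and the half-residue $\tfrac{1}{L}\cdot i\pi\cdot g(0)L/(2\pi i)=g(0)/2$ combines with the principal-value integral to yield exactly the claimed expression.

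The main obstacle is the bookkeeping for the contour shift: one must (i) verify that $g$ has the requisite analytic-continuation properties and decay along horizontal lines to justify all contour deformations (which is automatic from Paley--Wiener), (ii) justify the absolute convergence needed for Fubini at $\mathrm{Re}(s)>1$, and (iii) correctly identify the half-residue and confirm that the integral on the real line is to be read as a principal value (with $1/\nu$ paired against the even function $g$, this PV is finite even though the naive integral diverges). I do not expect any serious analytic difficulty here, but sign conventions for the residue and the orientation of the indentation are the easiest places to slip up.
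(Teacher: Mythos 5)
Your proposal is correct and is essentially the paper's own argument: both rest on the Euler-product identity $\sum_{p}\sum_{k\ge 1}(\alpha_p^{2k}+\overline{\alpha}_p^{2k})(\log p)\,p^{-ks} = \frac{\zeta'}{\zeta}(s)-\frac{L'}{L}(s,{\rm sym}^2\Delta)$, a representation of $S_{{\rm even},1}$ as a contour integral of these logarithmic derivatives on a line with $\Re(s)>1$, and a shift to the $1$-line that picks up exactly the half-residue $g(0)/2$ from the simple pole of $\zeta$ at $s=1$ (with the entirety and nonvanishing of $L(s,{\rm sym}^2\Delta)$ near $s=1$ ensuring no other contribution, and the evenness of $g$ making the remaining principal-value integral an honest convergent integral). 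The only cosmetic difference is that you produce the shifted contour integral by moving the Fourier-inversion contour for $g$ and applying Fubini, whereas the paper reaches the same integral via Perron's formula and then indents the $1$-line with a small semicircle $C_\delta$; the residue bookkeeping and the final change of variables are identical.
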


\begin{proof}

Let \be \twocase{\Lambda_\Delta(n)\ =\ }{(\alpha_p^{2\ell} + \overline{\alpha}_p^{2\ell}) \log p}{if $n=p^\ell$}{0}{otherwise.} \ee Note this is the natural generalization of $\Lambda(n)$ for the tau curve.

We have \be S_{{\rm even};1} \ = \ -\frac{1}{L}\sum_{n=1}^\infty \frac{\Lambda_\Delta(n)}{n} \ \hg\left(\frac{\log
n}{L}\right). \ee We use Perron's formula to re-write $S_{{\rm
even};1}$ as a contour integral. For any $\gep > 0$ set 
\bea\label{eq:integraldefI1} I_1 \ =
\ \ci \int_{\Re(z)=1+\gep} g\left(\frac{(2z-2)\log A}{2\pi i}\right)
\sum_{n=1}^\infty \frac{\Lambda_\Delta(n)}{n^z}\ dz; \eea
 we will later take $A = X/2\pi$. We write $z = 1+\gep+iy$ and write
$g(x+iy)$ in terms of the integral of $\hg(u)$, giving us 
\bea I_1 & \
= \ & \sum_{n=1}^\infty \frac{\Lambda_\Delta(n)}{n^{1+\gep}} \ci \intii
g\left(\frac{y\log A}{\pi}-\frac{i\gep\log
A}{\pi}\right)e^{-iy\log n} idy \nonumber\\ &=& \sum_{n=1}^\infty
\frac{\Lambda_\Delta(n)}{n^{1+\gep}} \frac1{2\pi} \intii \left[\intii
\left[\hg(u)e^{\gep u \log A}\right] e^{-2\pi i \frac{-y\log A}{\pi
}u} du \right]e^{-iy\log n}dy.\ \ \ \ \eea 
We let $h_\gep(u) =
\hg(u) e^{\gep u \log A}$. Note that $h_\gep$ is a smooth, compactly
supported function and $\widehat{\widehat{h_\gep}}(w) = h_\gep(-w)$.
Thus \bea I_1 & \ = \ & \sum_{n=1}^\infty
\frac{\Lambda_\Delta(n)}{n^{1+\gep}}  \frac1{2\pi} \intii
\widehat{h_\gep}\left(-\frac{y\log A}{\pi}\right) e^{-iy\log n} dy
\nonumber
\\ &=& \sum_{n=1}^\infty \frac{\Lambda_\Delta(n)}{n^{1+\gep}}
\frac1{2\pi}\intii \widehat{h_\gep}(y) e^{-2\pi i \frac{-y \log
n}{\log A}}\ \frac{\pi dy}{\log A} \nonumber
\\ &=&
\sum_{n=1}^\infty \frac{\Lambda_\Delta(n)}{n^{1+\gep}} \frac1{\log A}\
\widehat{\widehat{h_\gep}}\left(-\frac{\log n}{\log A}\right)
\nonumber\\
&=& \sum_{n=1}^\infty \frac{\Lambda_\Delta(n)}{n^{1+\gep}}  \frac1{\log A}\
\hg\left(\frac{\log n}{\log A}\right) e^{\gep \log n} \nonumber\\
&=& \frac1{\log A}\sum_{n=1}^\infty \frac{\Lambda_\Delta(n)}{n}\
\hg\left(\frac{\log n}{\log A}\right). \eea
 By taking $A = X/2\pi$
we find \be S_{{\rm even};1} \ = \ -\frac{1}{L}\sum_{n=1}^\infty \frac{\Lambda_\Delta(n)}{n} \ \hg\left(\frac{\log
n}{L}\right) \ = \ -I_1.\ee

We now re-write $I_1$ by shifting contours; we will not pass any
poles as we shift. For each $\delta > 0$ we consider the contour
made up of three pieces: $(1-i\infty,1-i\delta]$, $C_\delta$, and
$[1-i\delta,1+i\infty)$, where $C_\delta = \{z: z-1 = \delta
e^{i\theta}, \theta \in [-\pi/2,\pi/2]\}$ is the semi-circle going
counter-clockwise from $1-i\delta$ to $1+i\delta$. By Cauchy's
residue theorem, we may shift the contour in $I_1$ from $\Re(z) =
1+\gep$ to the three curves above. 

For use in rewriting the integral, we will consider the logarithmic derivative of the symmetric square $L$-function attached to $\tau^\ast$. From (3.15) of \cite{ILS} (recall the level $N=1$ in our case) it is \be L(s, {\rm sym}^2 \Delta) \ = \ \prod_p \left(1 - \frac{\alpha_p^2}{p^s}\right)^{-1} \left(1 - \frac{1}{p^s}\right)^{-1} \left(1 - \frac{\overline{\alpha}^2_p}{p^s}\right)^{-1}, \ee as $\alpha_p \overline{\alpha}_p = 1$. Taking the logarithmic derivative yields \be \frac{L'(s,{\rm sym}^2 \Delta)}{L(s,{\rm sym^2}\Delta)} \ = \ \sum_{\ell=1}^\infty \frac{(\alpha_p^{2\ell} + 1+\overline{\alpha}_p^{2\ell})\log p}{p^{s\ell}}, \ee so 
\bea \sum_{\ell=1}^\infty \frac{(\alpha_p^{2\ell} +\overline{\alpha}_p^{2\ell})\log p}{p^{s\ell}}&  \ = \ & \frac{L'(s,{\rm sym}^2 \Delta)}{L(s,{\rm sym^2}\Delta)} -  \sum_{\ell=1}^\infty \frac{\log p}{p^{s\ell}} \nonumber\\ &=& \frac{L'(s,{\rm sym}^2 \Delta)}{L(s,{\rm sym^2}\Delta)} -  \sum_n \frac{\Lambda(n)}{n^s} \nonumber\\ &=& \frac{L'(s,{\rm sym}^2 \Delta)}{L(s,{\rm sym^2}\Delta)} -  \frac{\zeta'(s)}{\zeta(s)}. \eea

We shall use this in replacing $\sum_n \Lambda_\Delta(n)n^{-z}$ in the integral definition of $I_1$ in \eqref{eq:integraldefI1}. We find \bea I_1 &\ = \ &
\ci\left[\int_{1-i\infty}^{1-i\delta} + \int_{C_\delta} +
\int_{1+i\delta}^{1+i\infty} g\left(\frac{(2z-2)\log A}{2\pi
i}\right) \sum_n\frac{-\Lambda_\Delta(n)}{n^z}\ dz\right] \nonumber\\
&=& \ci\left[\int_{1-i\infty}^{1-i\delta} + \int_{C_\delta} +
\int_{1+i\delta}^{1+i\infty} g\left(\frac{(2z-2)\log A}{2\pi
i}\right) \left(-\frac{L'}{L}(z,{\rm sym}^2\Delta) + \frac{\zeta'}{\zeta}(z)\right)\ dz\right].\nonumber\\ \eea 

The integral over $C_\delta$ is easily evaluated. Shimura \cite{Sh} proved that $L(s,{\rm sym}^2 \Delta)$ is entire, and thus so too is its logarithmic derivative. 
 Thus there is no contribution from the symmetric square piece in the limit as $\delta \to 0$. 
  We now argue that the $\zeta'/\zeta$ term contributes $-g(0)/2$. As $\zeta(s)$ has a pole at $s=1$, $\zeta'(s)/\zeta(s) = -1/(s-1) + \cdots$, and thus we must multiply the contribution from the residue by $-1$ because of the pole. We get just minus half the residue of $g\left(\frac{(2z-2)\log
A}{2\pi i}\right)$. Thus the $C_\delta$ piece is
$-g(0)/2$. 

We now take the limit as $\delta \to 0$: \be I_1 \ = \
-\frac{g(0)}2 - \lim_{\delta \to 0} \frac1{2\pi}
\left[\int_{-\infty}^{-\delta} + \int_{\delta}^\infty
g\left(\frac{y\log A}{\pi}\right) \
\left(\frac{L'}{L}(z,{\rm sym}^2\Delta) - \frac{\zeta'}{\zeta}(z)\right) dy\right].\ee As $g$ is an even
Schwartz function, the limit of the integral above is well-defined
(for large $y$ this follows from the decay of $g$, while for small
$y$ it follows from the fact that $\zeta'(1+iy)/\zeta(1+iy)$ has a
simple pole at $y=0$ and $g$ is even). We again take $A=X/2\pi$,
and change variables to $\nu = \frac{y\log A}{\pi} = \frac{yL
}{\pi}$. Thus \bea I_1 & \ = \ & -\frac{g(0)}2 - \frac{1}{L}\intii
g(\nu) \left(\frac{L'}{L}\left(1+\frac{2\pi i \nu}{L},{\rm sym}^2\Delta\right) - \frac{\zeta'}{\zeta}\left(1+\frac{2\pi i \nu}{L}\right)\right) d\nu \nonumber\\ &=& -S_{{\rm even},1}, \eea which completes the proof of Lemma
\ref{lem:ntsumkeven1}. \end{proof}

\begin{lemma}\label{sumlemma}  Let $y'=y-iw$ be as in Proposition \ref{bconverges}.  Then for all $\epsilon>0$,
\begin{itemize}

\item[i.)]  \be \sum_{m=1}^\infty\frac{\tau^*(p^{2m})}{p^{m(1-2iy')}} \ =\ \frac{\tau^*(p^2)}{p^{1-2iy'}}+O\left(\frac{1}{p^{2-4w-\epsilon}}\right)
\ee

\item[ii.)]\be \frac{\tau^*(p)}{p}\sum_{m=0}^\infty\frac{\tau^*(p^{2m+1})}{p^{m(1-2iy')}} \ = \ \frac{\tau^*(p)^2}{p^{2}}+O\left(\frac{1}{p^{2-2w-\epsilon}}\right)
\ee

\item[iii.)]\be \frac{1}{p^{1+2iy'}}\sum_{m=0}^\infty\frac{\tau^*(p^{2m})}{p^{m(1-2iy')}} \ = \ \frac{1}{p^{1+2iy'}}+O\left(\frac{1}{p^{2-\epsilon}}\right).
\ee

\end{itemize}
\end{lemma}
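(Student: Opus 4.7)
The approach will be to isolate the leading term of each sum---the $m=1$ term for (i) and the $m=0$ term for (ii) and (iii)---and bound the remaining tail by a geometric-type series. The only arithmetic input needed is Deligne's bound $|\tau^*(p^k)| \leq k+1$, which is immediate from $|\alpha_p| = |\overline{\alpha}_p| = 1$ and the identity $\tau^*(p^k) = \sum_{j=0}^{k} \alpha_p^{k-j}\overline{\alpha}_p^{j}$. Writing $y' = y - iw$, I will use the fact that $1 - 2iy' = (1-2w) - 2iy$, so $|p^{m(1-2iy')}| = p^{m(1-2w)}$; by the range $-\tfrac{1}{2}+\epsilon' \leq w \leq \tfrac{1}{4}-\epsilon'$ from Proposition \ref{bconverges}, the exponent $1-2w$ is bounded uniformly away from $0$, which is what makes the geometric decay in $p$ work.

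First I would handle (i). After splitting off the $m=1$ term $\tau^*(p^2)/p^{1-2iy'}$, the tail satisfies
\begin{equation*}
\left|\sum_{m \geq 2} \frac{\tau^*(p^{2m})}{p^{m(1-2iy')}}\right| \ \leq \ \sum_{m\geq 2} \frac{2m+1}{p^{m(1-2w)}},
\end{equation*}
and since $1-2w$ is bounded away from $0$, this series is dominated by its $m=2$ term, yielding $O(p^{-2(1-2w)})$. The polynomial factor $2m+1$ is absorbed into an extra $p^{\epsilon}$, producing the stated $O(p^{-(2-4w-\epsilon)})$. Parts (ii) and (iii) proceed by essentially the same template. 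For (ii), the $m=0$ term gives the stated leading contribution, while the $m \geq 1$ portion of $\sum \tau^*(p^{2m+1})/p^{m(1-2iy')}$ is $O(p^{-(1-2w)})$ by bounding its leading ($m=1$) term; multiplying by $|\tau^*(p)/p| \leq 2/p$ yields a tail of size $O(p^{-(2-2w-\epsilon)})$. For (iii), the $m=0$ contribution produces $1/p^{1+2iy'}$, and the remaining tail---$1/p^{1+2iy'}$ of modulus $p^{-(1+2w)}$ multiplied by the $m\geq 1$ sum of order $p^{-(1-2w)}$ from part (i)---is of size $O(p^{-2})$, comfortably within $O(p^{-(2-\epsilon)})$.

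There is no genuine obstacle here; the proof amounts to three instances of dominated geometric summation. The only care required is in tracking how the imaginary shift $w$ in $y'$ alters the effective decay rate $1-2w$ in the modulus of $p^{m(1-2iy')}$, and the mild $p^\epsilon$ slack in each stated error term simply absorbs the polynomial growth of $|\tau^*(p^k)|$ from Deligne's bound. Given the shift bounds from Proposition \ref{bconverges}, each of the three error terms is $o(1/p)$, which is exactly what is needed for the subsequent product expansion of $\Theta(p)$ to converge to $1 + O(1/p^{1+\delta})$ there.
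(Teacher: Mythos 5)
Your proof is correct and follows essentially the same route as the paper: isolate the $m=1$ (resp.\ $m=0$) term and bound the tail by a geometric series whose common ratio $p^{-(1-2w)}$ is uniformly less than $1$ on the allowed range of $w$ from Proposition \ref{bconverges}. The only cosmetic difference is that you invoke Deligne's bound in the form $|\tau^*(p^k)|\le k+1$ and absorb the polynomial factor into the geometric decay, whereas the paper uses $|\tau^*(n)|\le Cn^{\epsilon}$; both yield the stated error terms.
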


\begin{proof} Fix $\epsilon$, and pick $C$ such that $|\tau^*(n)|\leq Cn^\epsilon$ for all $n$ (such a $C$ exists by Deligne's theorem, which implies $\tau(n)=O(n^{11/2+\epsilon})$ for all $\epsilon>0$; see \cite{Serre}).  Also, recall that for real numbers $a,x,y$, we have $|a^{x+iy}|=|a^x|$.

\begin{itemize}

\item[i.)]  The first claim follows from

\begin{align}
\left| \sum_{m=2}^\infty\frac{\tau^*(p^{2m})}{p^{m(1-2iy')}}\right| \ \leq\ &\sum_{m=2}^\infty\left|\frac{C\cdot(p^{2m})^\epsilon}{p^{m(1-2iy')}}\right|\nonumber
\\ \ = \ &C\sum_{m=2}^\infty\frac{1}{\left|p^{1-2iy'-2\epsilon}\right|^m}\nonumber
\\ \ = \ &C\sum_{m=2}^\infty\frac{1}{\left|p^{1-2w-2iy-2\epsilon}\right|^m}\nonumber
\\ \ = \ &C\sum_{m=2}^\infty\left(\frac{1}{p^{1-2w-2\epsilon}}\right)^m\nonumber
\\ \ = \ &C\left(\frac{1}{p^{1-2w-2\epsilon}}\right)^2\sum_{m=0}^\infty\left(\frac{1}{p^{1-2w-2\epsilon}}\right)^m\nonumber
\\ \ = \ &C\left(\frac{1}{p^{1-2w-2\epsilon}}\right)^2\cdot\frac{1}{1-\frac{1}{p^{1-2w-1\epsilon}}}\nonumber
\\ \ = \ &C'\frac{1}{p^{2-4w-4\epsilon}}\ = \ O\left(\frac{1}{p^{2-4w-4\epsilon}}\right).
\end{align}

\item[ii.)]  The second claim follows from
\begin{align}
\left|\frac{\tau^*(p)}{p}\sum_{m=1}^\infty\frac{\tau^*(p^{2m+1})}{p^{m(1-2iy')}}\right|\ \leq\  &\frac{|\tau^*(p)|}{p}\sum_{m=1}^\infty\left|\frac{C\cdot(p^{2m+1})^\epsilon}{p^{m(1-2iy')}}\right|\nonumber
\\ \ = \ &\frac{C|\tau^*(p)|}{p^{1-\epsilon}}\sum_{m=1}^\infty\frac{1}{\left|p^{1-2iy'-2\epsilon}\right|^m}\nonumber
\\ \ = \ &\frac{C|\tau^*(p)|}{p^{1-\epsilon}}\sum_{m=1}^\infty\frac{1}{\left|p^{1-2w-2iy-2\epsilon}\right|^m}\nonumber
\\ \ = \ &\frac{C|\tau^*(p)|}{p^{1-\epsilon}}\sum_{m=1}^\infty\left(\frac{1}{p^{1-2w-2\epsilon}}\right)^m\nonumber
\\ \ = \ &\frac{C|\tau^*(p)|}{p^{1-\epsilon}}\frac{1}{p^{1-2w-2\epsilon}}\sum_{m=0}^\infty\left(\frac{1}{p^{1-2w-2\epsilon}}\right)^m\nonumber
\\ \ = \ &\frac{C|\tau^*(p)|}{p^{2-2w-3\epsilon}}\frac{1}{1-\frac{1}{p^{1-2w-1\epsilon}}}\nonumber
\\ \ \leq\ &C'\frac{1}{p^{2-2w-3\epsilon}}=O\left(\frac{1}{p^{2-2w-3\epsilon}}\right).
\end{align}

\item[iii.)] The third claim follows from the fact that $\tau^*(1)=1$ and from the following bound (where several steps are omitted due to similarity of the bound for the first claim):
\begin{align}
\left|\frac{1}{p^{1+2iy'}} \sum_{m=1}^\infty\frac{\tau^*(p^{2m})}{p^{m(1-2iy')}}\right|\ \leq\ & \frac{1}{|p^{1+2iy'}|}\sum_{m=1}^\infty\left|\frac{C\cdot(p^{2m})^\epsilon}{p^{m(1-2iy')}}\right|\nonumber
\\ \ = \ &\frac{C}{p^{1+2w}}\sum_{m=1}^\infty\frac{1}{\left|p^{1-2iy'-2\epsilon}\right|^m}\nonumber
\\ \ = \ & \frac{C}{p^{1+2w}}\frac{1}{p^{1-2w-2\epsilon}}\sum_{m=0}^\infty\left(\frac{1}{p^{1-2w-2\epsilon}}\right)^m\nonumber
\\ \ = \ &\frac{C}{p^{1+2w}}\frac{1}{p^{1-2w-2\epsilon}}\cdot\frac{1}{1-\frac{1}{p^{1-2w-1\epsilon}}}\nonumber
\\ \leq\ &C'\frac{1}{p^{2-4\epsilon}}=O\left(\frac{1}{p^{2-4\epsilon}}\right).
\end{align}
\end{itemize}
\end{proof}

\newpage


\begin{thebibliography}{GJMMNPP}

\bibitem[Be]{Be}
\newblock M. V. Berry, \emph{Semiclassical formula for the number
variance of the Riemann zeros}, Nonlinearity \textbf{1} (1988),
399--407.

\bibitem[BeKe]{BeKe}
\newblock M. V. Berry and J. P. Keating, \emph{The Riemann zeros and
eigenvalue asymptotics}, Siam Review \textbf{41} (1999), no. 2,
236--266.

\bibitem[BBLM]{BBLM}
\newblock E. Bogomolny, O. Bohigas, P. Leboeuf and A. G. Monastra,
\emph{On the spacing distribution of the Riemann zeros: corrections
to the asymptotic result}, Journal of Physics A: Mathematical and
General \textbf{39} (2006), no. 34, 10743--10754.

\bibitem[BoKe]{BoKe}
\newblock E. B. Bogomolny and J. P. Keating, \emph{Gutzwiller's
trace formula and spectral statistics: beyond the diagonal
approximation}, Phys. Rev. Lett. \textbf{77} (1996), no. 8,
1472--1475.

\bibitem[BCDT]{BCDT}
\newblock C. Breuil, B. Conrad, F. Diamond and R. Taylor, \emph{On
the modularity of elliptic curves over \textbf{Q}: wild $3$-adic
exercises}, J. Amer. Math. Soc. \textbf{14} (2001), no. 4, 2001,
843--939.

\bibitem[Co]{matrix}
J.B. Conrey, \emph{Notes on eigenvalue distributions for the classical compact groups}, London Math. Soc. Lecture Note Ser., 322, Cambridge Univ. Press, Cambridge, 2005.
\label{matrix}


\bibitem[CF]{CF}
\newblock B. Conrey and D. Farmer, \emph{Mean values of
$L$-functions and symmetry}, Internat. Math. Res. Notices 2000, no.
17, 883--908.

\bibitem[CFKRS]{CFKRS}
\newblock B. Conrey, D. Farmer, P. Keating, M. Rubinstein and N.
Snaith, \emph{Integral moments of $L$-functions}, Proc. London Math.
Soc. (3)  \textbf{91} (2005),  no. 1, 33--104.

\bibitem[CFZ1]{CFZ1}
\newblock J. B. Conrey, D. W. Farmer and M. R. Zirnbauer, \emph{Autocorrelation of ratios
of $L$-functions}, preprint. \texttt{http://arxiv.org/abs/0711.0718}

\bibitem[CFZ2]{CFZ2}
\newblock J. B. Conrey, D. W. Farmer and M. R. Zirnbauer, \emph{Howe pairs, supersymmetry,
and ratios of random characteristic polynomials for the classical
compact groups}, preprint.
\texttt{http://arxiv.org/abs/math-ph/0511024}

\bibitem[CS1]{CS1}
\newblock J. B. Conrey and N. C. Snaith, \emph{Applications of the
$L$-functions Ratios Conjecture},  Proc. Lon. Math. Soc. \textbf{93} (2007), no 3, 594--646.

\bibitem[CS2]{CS2}
\newblock J. B. Conrey and N. C. Snaith, \emph{Triple correlation of
the Riemann zeros}, preprint.
\texttt{http://arxiv.org/abs/math/0610495}

\bibitem[Da]{Da}
\newblock H. Davenport, \emph{Multiplicative Number Theory, $2$nd edition},
 Graduate Texts in Mathematics \textbf{74}, Springer-Verlag, New York,
 $1980$, revised by H. Montgomery.

\bibitem[DHKMS1]{DHKMS1}
\newblock E. Due\~nez, D. K. Huynh, J. P. Keating, S. J. Miller and
N. C. Snaith, \emph{The lowest eigenvalue of Jacobi Random Matrix Ensembles and Painlev\'e VI}, preprint.

\bibitem[DHKMS2]{DHKMS2}
\newblock E. Due\~nez, D. K. Huynh, J. P. Keating, S. J. Miller and
N. C. Snaith, \emph{A random matrix model for elliptic curve $L$-functions of finite conductor}, preprint.

\bibitem[DM1]{DM1}
\newblock E. Due\~nez and S. J. Miller, \emph{The low lying zeros of a
$\text{GL}(4)$ and a $\text{GL}(6)$ family of $L$-functions},
Compositio Mathematica \textbf{142} (2006), no. 6, 1403--1425.

\bibitem[DM2]{DM2}
\newblock E. Due\~nez and S. J. Miller, \emph{The effect of
convolving families of $L$-functions on the underlying group
symmetries}, preprint. \texttt{http://arxiv.org/abs/math/0607688}

\bibitem[Dy1]{Dy1}
F. Dyson, \emph{Statistical theory of the energy levels of complex
systems: I, II, III}, J. Mathematical Phys. \textbf{3} (1962)
140--156, 157--165, 166--175.

\bibitem[Dy2]{Dy2}
F. Dyson, \emph{The threefold way. Algebraic structure of symmetry
groups and ensembles in quantum mechanics}, J. Mathematical Phys.,
\textbf{3} (1962) 1199--1215.


\bibitem[ET]{ET}
\newblock A. Erd$\acute{{\rm e}}$lyi and F. G. Tricomi, \emph{The asymptotic expansion of a ratio
of gamma functions}, Pacific J. Math. \textbf{1} (1951), no. 1,
133--142.

\bibitem[FM]{FM}
F. W. K. Firk and S. J. Miller, \emph{Nuclei, Primes and the Random Matrix Connection}, Symmetry \textbf{1} (2009), 64--105.

\bibitem[FI]{FI}
\newblock E. Fouvry and H. Iwaniec, \emph{Low-lying zeros of dihedral
$L$-functions}, Duke Math. J.  \textbf{116} (2003),  no. 2, 189-217.

\bibitem[Gao]{Gao}
\newblock P. Gao, \emph{$N$-level density of the low-lying zeros of
quadratic Dirichlet $L$-functions}, Ph.~D thesis, University of
Michigan, 2005.

\bibitem[GJMMNPP]{GJMMNPP}
\newblock J. Goes, S. Jackson, S. J. Miller, D. Montague, K. Ninsuwan, R. Peckner and T. Pham, \emph{A unitary test of the $L$-functions Ratios Conjecture}, to appear in the Journal of Number Theory.

\bibitem[GM]{GM}
\newblock J. Goes and S. J. Miller, \emph{Towards an `average' version of the Birch and Swinnerton-Dyer Conjecture}, preprint.

\bibitem[G\"u]{Gu}
\newblock A. G\"ulo\u{g}lu, \emph{Low-Lying Zeros of Symmetric
Power $L$-Functions}, Internat. Math. Res. Notices 2005, no. 9,
517-550.

\bibitem[HW]{HW}
\newblock G. Hardy and E. Wright, \emph{An Introduction to the
Theory of Numbers}, fifth edition, Oxford Science Publications,
Clarendon Press, Oxford, $1995$.

\bibitem[Ha]{Ha}
B. Hayes, \emph{The spectrum of Riemannium}, American Scientist
\textbf{91} (2003), no. 4, 296--300.


\bibitem[Hej]{Hej}
\newblock D. Hejhal, \emph{On the triple correlation of zeros of
the zeta function}, Internat. Math. Res. Notices 1994, no. 7,
294-302.

\bibitem[HM]{HM}
\newblock C. Hughes and S. J. Miller, \emph{Low-lying zeros of $L$-functions
with orthogonal symmtry}, Duke Math. J., \textbf{136}
(2007), no. 1, 115--172.

\bibitem[HR]{HR}
\newblock C. Hughes and Z. Rudnick, \emph{Linear Statistics of
Low-Lying Zeros of $L$-functions},  Quart. J. Math. Oxford
\textbf{54} (2003), 309--333.

\bibitem[HKS]{HKS}
\newblock D. K. Huynh, J. P. Keating and N. C. Snaith, work in
progress.

\bibitem[HMM]{HMM}
\newblock D. K. Huynh, S. J. Miller and
R. Morrison, \emph{An Elliptic Curve Test of the $L$-Functions Ratios Conjecture}.

\bibitem[IK]{IK}
H. Iwaniec and E. Kowalski, \emph{Analytic Number Theory}, AMS
Colloquium Publications, Vol. 53, AMS, Providence, RI, $2004$.


\bibitem[ILS]{ILS}
\newblock H. Iwaniec, W. Luo and P. Sarnak, \emph{Low lying zeros of
families of $L$-functions}, Inst. Hautes Études Sci. Publ. Math.
\textbf{91}, 2000, 55--131.

\bibitem[Ju1]{Ju1}
\newblock M. Jutila, \emph{On character sums and class numbers},
Journal of Number Theory \textbf{5} (1973), 203--214.

\bibitem[Ju2]{Ju2}
\newblock M. Jutila, \emph{On mean values of Dirichlet polynomials
with real characters}, Acta Arith. \textbf{27} (1975), 191--198.

\bibitem[Ju3]{Ju3}
\newblock M. Jutila, \emph{On the mean value of $L(1/2,\chi)$ for real
characters}, Analysis \textbf{1} (1981), no. 2, 149--161.

\bibitem[KaSa1]{KaSa1}
\newblock N.~Katz and P.~Sarnak, \emph{Random Matrices, Frobenius
Eigenvalues and Monodromy}, AMS Colloquium Publications \textbf{45},
AMS, Providence, $1999$.

\bibitem[KaSa2]{KaSa2}
\newblock N.~Katz and P.~Sarnak, \emph{Zeros of zeta functions and symmetries},
Bull. AMS \textbf{36}, $1999$, $1-26$.

\bibitem[Ke]{Ke}
\newblock J. P. Keating, \emph{Statistics of quantum eigenvalues and the Riemann zeros},
in Supersymmetry and Trace Formulae: Chaos and Disorder, eds. I. V.
Lerner, J. P. Keating \& D. E Khmelnitskii (Plenum Press), 1--15.

\bibitem[KeSn1]{KeSn1}
\newblock J. P. Keating and N. C. Snaith, \emph{Random matrix theory
and $\zeta(1/2+it)$},  Comm. Math. Phys.  \textbf{214} (2000),  no.
1, 57--89.

\bibitem[KeSn2]{KeSn2}
\newblock J. P. Keating and N. C. Snaith, \emph{Random matrix theory and $L$-functions at $s=1/2$},
Comm. Math. Phys.  \textbf{214}  (2000),  no. 1, 91--110.

\bibitem[KeSn3]{KeSn3}
\newblock J. P. Keating and N. C. Snaith, \emph{Random
matrices and $L$-functions}, Random matrix theory, J. Phys. A
\textbf{36} (2003), no. 12, 2859--2881.

\bibitem[Mil1]{Mil1}
\newblock S. J. Miller, \emph{$1$- and $2$-level densities for families of elliptic
curves: evidence for the underlying group symmetries}, Compositio
Mathematica \textbf{140} (2004), 952--992.

\bibitem[Mil2]{Mil2}
\newblock S. J. Miller, \emph{Variation in the number of points on elliptic curves and
applications to excess rank}, C. R. Math. Rep. Acad. Sci. Canada
\textbf{27} (2005), no. 4, 111--120.


\bibitem[Mil3]{Mil3}
\newblock S. J. Miller, \emph{A symplectic test of the $L$-Functions Ratios Conjecture}, Int Math Res Notices (2008) Vol. 2008, article ID rnm146, 36 pages, doi:10.1093/imrn/rnm146.

\bibitem[Mil4]{Mil4}
\newblock S. J. Miller, \emph{Lower
order terms in the $1$-level density for families of holomorphic
cuspidal newforms}, Acta Arithmetica \textbf{137} (2009), 51--98.

\bibitem[Mil5]{Mil5}
\newblock S. J. Miller, \emph{An orthogonal test of the $L$-Functions Ratios Conjecture}, Proceedings of the London Mathematical Society 2009, doi:10.1112/plms/pdp009.

\bibitem[MilMo]{MilMo}
\newblock S. J. Miller and D. Montague, \emph{An Orthogonal Test of the $L$-functions Ratios Conjecture, II}, preprint.

\bibitem[MT-B]{MT-B}
\newblock S. J. Miller and R. Takloo-Bighash, \emph{An Invitation to Modern Number Theory}, Princeton University Press, Princeton, NJ, 2006.


\bibitem[Mon]{Mon}
\newblock H. Montgomery, \emph{The pair correlation of zeros of the zeta
function}, Analytic Number Theory, Proc. Sympos. Pure Math.
\textbf{24}, Amer. Math. Soc., Providence, $1973$, $181-193$.

\bibitem[Od1]{Od1}
\newblock A. Odlyzko, \emph{On the distribution of spacings
between zeros of the zeta function}, Math. Comp. \textbf{48} (1987),
no. 177, 273--308.

\bibitem[Od2]{Od2}
\newblock A. Odlyzko, \emph{The $10^{22}$-nd zero of the Riemann zeta function}, Proc.
Conference on Dynamical, Spectral and Arithmetic Zeta-Functions, M.
van Frankenhuysen and M. L. Lapidus, eds., Amer. Math. Soc.,
Contemporary Math. series, 2001,
\texttt{http://www.research.att.com/$\sim$amo/doc/zeta.html}.

\bibitem[OS1]{OS1}
\newblock A. E. \"Ozl\"uk and C. Snyder, \emph{Small zeros of quadratic $L$-functions},
Bull. Austral. Math. Soc. \textbf{47} (1993), no. 2, 307--319.

\bibitem[OS2]{OS2}
\newblock A. E. \"Ozl\"uk and C. Snyder, \emph{On the distribution of the
non-trivial zeros of quadratic $L$-functions close to the real axis},
Acta Arith. \textbf{91} (1999), no. 3, 209--228.

\bibitem[RR]{RR}
\newblock G. Ricotta and E. Royer, \emph{Statistics for low-lying
zeros of symmetric power $L$-functions in the level aspect},
preprint. \texttt{http://arxiv.org/abs/math/0703760}

\bibitem[Ro]{Ro}
\newblock E. Royer, \emph{Petits z\'{e}ros de fonctions $L$
de formes modulaires}, Acta Arith. \textbf{99} (2001),  no. 2,
147-172.

\bibitem[Rub1]{Rub1}
\newblock M. Rubinstein, \emph{Low-lying zeros of $L$--functions
and random matrix theory}, Duke Math. J. \textbf{109}, (2001),
147--181.

\bibitem[Rub2]{Rub2}
\newblock M. Rubinstein, \emph{Computational methods and experiments
in analytic number theory}. Pages 407--483 in Recent Perspectives in
Random Matrix Theory and Number Theory, ed. F. Mezzadri and N. C.
Snaith editors, 2005.

\bibitem[RS]{RS}
\newblock Z. Rudnick and P. Sarnak, \emph{Zeros of principal $L$-functions
 and random matrix theory}, Duke Math. J. \textbf{81},
 $1996$, $269-322$.

\bibitem[Se]{Serre}
J. P. Serre, \emph{A Course in Arithmetic}, Springer-Verlag, 1973, 97.
\label{Serre}

\bibitem[Sh]{Sh}
G. Shimura, \emph{On the holomorphy of certain Dirichlet series}, Proc. Lond. Math. Soc. \textbf{31} (1975), no. 3, 79--98.

\bibitem[So]{So}
\newblock K. Soundararajan, \emph{Nonvanishing of quadratic
Dirichlet $L$-functions at $s=1/2$}, Ann. of Math. (2) \textbf{152}
(2000), 447--488.

\bibitem[TW]{TW}
R. Taylor and A. Wiles, \emph{Ring-theoretic properties of certain
Hecke algebras}, Ann. Math. \textbf{141} (1995), 553--572.

\bibitem[Wig1]{Wig1}
E. Wigner, \emph{On the statistical distribution of the widths and
spacings of nuclear resonance levels}, Proc. Cambridge Philo. Soc.
\textbf{47} (1951), 790--798.

\bibitem[Wig2]{Wig2}
E. Wigner,\ \emph{Characteristic vectors of bordered matrices with
infinite dimensions}, Ann. of Math. \textbf{2} (1955), no. 62,
548--564.

\bibitem[Wig3]{Wig3}
E. Wigner, \emph{Statistical Properties of real symmetric
matrices}. Pages 174--184 in \emph{Canadian Mathematical Congress
Proceedings}, University of Toronto Press, Toronto, 1957.

\bibitem[Wig4]{Wig4}
E. Wigner, \emph{Characteristic vectors of bordered matrices with
infinite dimensions. II}, Ann. of Math. Ser. 2 \textbf{65} (1957),
203--207.

\bibitem[Wig5]{Wig5}
E. Wigner, \emph{On the distribution of the roots of certain
symmetric matrices}, Ann. of Math. Ser. 2 \textbf{67} (1958),
325--327.


\bibitem[Wi]{Wi}
A. Wiles, \emph{Modular elliptic curves and Fermat's last
theorem}, Ann. Math. \textbf{141} (1995), 443--551.

\bibitem[Wis]{Wis}
J. Wishart, \emph{The generalized product moment distribution in
samples from a normal multivariate population}, Biometrika
\textbf{20 A} (1928), 32--52.


\bibitem[Yo1]{Yo1}
\newblock M. Young, \emph{Lower-order terms of the 1-level density of families of elliptic
curves},  Internat. Math. Res. Notices 2005,  no. 10, 587--633.

\bibitem[Yo2]{Yo2}
\newblock M. Young, \emph{Low-lying zeros of families of elliptic curves},
J. Amer. Math. Soc. \textbf{19} (2006), no. 1, 205--250.


\end{thebibliography}
\end{document}